\newtheorem{definition}{Definition}[section]
\newtheorem{theorem}[definition]{Theorem}
\newtheorem{lemma}[definition]{Lemma}
\newtheorem{corollary}[definition]{Corollary}
\theoremstyle{remark}
\newtheorem{remark}[definition]{Remark}
\numberwithin{equation}{section}
\newcommand{\R}{\mathbb{R}}
\title{Pointwise and Oscillation Estimates via Riesz Potentials for Mixed Local and Nonlocal Parabolic Equations}
\author[a]{Lingwei Ma}
\author[b]{Qi Xiong}
\author[c]{Zhenqiu Zhang\thanks{Corresponding author.}}
\affil[a]{School of Mathematical Sciences, Nankai University, Tianjin 300071, P.R. China}
\affil[b]{School of Mathematics, Southwest Jiaotong University, Chengdu 610031, Sichuan, P.R. China}
\affil[c]{School of Mathematical Sciences and LPMC, Nankai University, Tianjin 300071, P.R. China}
\date{\today}
\begin{document}
\maketitle
\footnotetext[1]{E-mail: mlw1103@163.com (L. Ma), xq@swjtu.edu.cn (Q. Xiong), zqzhang@nankai.edu.cn (Z. Zhang).}

\begin{abstract}
We establish a class of pointwise estimates for weak solutions to mixed local and nonlocal parabolic equations involving measure data and merely measurable coefficients via caloric Riesz potentials. Such estimates effectively bound the sizes and oscillations of weak solutions, respectively.
The proof relies on  demonstrating a new local H\"{o}lder estimate with an optimal $L^q$-Tail for weak solutions to the corresponding homogeneous problem, which remarkably extends the $L^\infty$-Tail in previous work. It is worth mentioning that our main results capture both local and nonlocal features of the double phase parabolic equations and, more importantly, remain valid for SOLA (Solutions Obtained by Limit of Approximations).
\\

Mathematics Subject classification (2020): 35K55; 35R09; 35R05; 31C45; 35B65.

Keywords: Mixed local-nonlocal parabolic equations; Measure data; Riesz potential; Pointwise estimates; Oscillation estimates. \\

\end{abstract}


\section{Introduction}\label{section1}

In this paper, we consider the following measure data parabolic problem, which involves a mixed diffusion characterized by the coexistence of local and nonlocal operators
\begin{equation}\label{eq1}
\partial_t u-\operatorname{div}\left(\mathcal{A}(t,x, Du)\right) +\mathcal{L} u = \mu ~~\text{in}~~\Omega_T.
\end{equation}
Here $\Omega_T=I\times \Omega$ denotes the space-time cylinder formed by
the time interval $I=(0,T)$ and the bounded domain $\Omega\subset\mathbb{R}^d$ with $d \geq 2$, and $\mu\in\mathcal M (\mathbb R^{d+1})$ is a general Radon measures with finite total mass on
$\mathbb R^{d+1}$.
The vector field $\mathcal{A}(t,x,\xi): \Omega_T\times\mathbb{R}^{ d}\rightarrow\mathbb{R}^{d}$ is a Carath\'{e}odory function satisfying the following growth and ellipticity conditions
\begin{equation}\label{growth1}
  \left|\mathcal{A}(t,x,\xi)\right|\leq\Lambda|\xi|,
\end{equation}
\begin{equation}\label{ellipticity}
\left( \mathcal{A}(t,x,\eta)-\mathcal{A}(t,x,\xi)\right)\cdot(\eta-\xi)\geq\Lambda^{-1}|\eta-\xi|^{2}
\end{equation}
for almost all $(t,x)\in\Omega_T$ and every $\xi,\,\eta\in\mathbb{R}^{d}$ with the structure constant $\Lambda\geq 1$. The nonlocal operator $\mathcal{L}$ is defined by
\begin{equation}\label{eq:L}
\mathcal{L}  u(t,x) = \text{P.V.}\int_{\mathbb{R}^d}(u(t,x)-u(t,y)){K}(t,x,y) \operatorname{d}\!y,
\end{equation}
where the kernel $ {K}: \mathbb{R} \times \mathbb{R}^d \times \mathbb{R}^d \to [0, \infty)$ is assumed to be measurable, and fulfilling the following ellipticity condition
\begin{equation}\label{ellip}
  \Lambda^{-1}|x - y|^{-d-2s} \leq  {K}(t,x, y) \leq \Lambda|x - y|^{-d-2s}
\end{equation}
for a.e. $(t, x, y) \in I \times \mathbb{R}^d \times \mathbb{R}^d$ with the fractional order $s \in (0,1)$.
The prototype of such mixed operators is $-\Delta+(-\Delta)^s$, which arises naturally from the stochastic processes that encompass both classical random walk and L\'{e}vy flight. This type of operator can be used
to model many important physical phenomena, such as heat transport in magnetized plasmas \cite{BdC13}, the spread of pandemics \cite{EG07}, and population dynamics \cite{DPV23}. Consequently, significant attention has been devoted to exploring the qualitative properties and regularity theory of solutions to mixed local-nonlocal elliptic and parabolic equations, please refer to \cite{BDVV22, BKK24, BVDV21, DM24, FSZ22, GK22, GK24, N23, SZ24} and references therein.

A powerful approach to studying the regularity of solutions to the linear Poisson equation $-\Delta u=\mu$ is based on an explicit representation formula via the fundamental solution. As an immediate consequence, the solution and its gradient can be controlled pointwise by Riesz potentials. Due to the behaviour of Riesz potentials in various relevant function spaces is known, such potential estimates provide a unified framework for exploring the regularity properties of solutions at every function space scale.
A natural question is whether these pointwise potential estimates are still available for general partial differential equations that lack representation formulas for solutions.

In the context of elliptic equations, the first affirmative answer can be traced back to the pioneering work of Kilpel\"{a}inen and Mal\'{y} \cite{KiMa, KiMa2}, who obtained pointwise estimates for solutions to the $p$-Laplace type equation with merely measurable coefficients
via the nonlinear Wolff potential of the nonhomogeneous term. Afterwards Trudinger and Wang \cite{TrWa} offered an alternative approach. Nearly twenty years later, Mingione and his groups \cite{DuMi2, KuMi, KM13, Min} upgraded the previous pointwise estimates to gradient as well as oscillation levels, provided the coefficients are appropriately regular.
Subsequently, these types of pointwise potential estimates have been extended to various kinds of elliptic equations and systems, we refer to \cite{BM, BS23, CiSc, DKLN24, DKM14, KuMinSi, KuMi3, MP20, MZ, MZ24, MZZ, NOS24, XZM} and references therein for details.

Within the framework of local parabolic equations, Duzaar and Mingione \cite{DuMi3} showed that the solution of the local parabolic equation
\begin{equation*}
\partial_t u-\operatorname{div}\left(\mathcal{A}(t,x, Du)\right) = \mu ~~\text{in}~~\Omega_T
\end{equation*}
satisfies the following pointwise estimate
\begin{equation*}
  |u(t_0,x_0)|\leq C \fint_{Q_{R}(t_0,x_0)} |u|  \operatorname{d}\!x\operatorname{d}\!t+C
\mathcal {I}^{\mu}_{2}(t_0,x_0;2R),
\end{equation*}
whenever the vector field $\mathcal{A}$ fulfills the conditions \eqref{growth1} and \eqref{ellipticity}, the parabolic cylinder $Q_{2R}(t_0,x_0)=(t_0-(2R)^2,t_0)\times B_{2R}(x_0)\Subset \Omega_T$, and the caloric Riesz potential $\mathcal {I}^{\mu}_{2}(t_0,x_0;2R)$ of order $2$ is defined in Definition \ref{Rieszpotential}.
In the nonlocal parabolic setting,
Nguyen, Nowak, Sire, and Weidner recently included in their paper \cite{NNSW23} a pointwise estimate for the weak solution of the fractional heat equation given by
\begin{equation*}
\partial_t u +\mathcal{L} u = \mu ~~\text{in}~~\Omega_T,
\end{equation*}
where the integral operator $\mathcal L$ is defined as in \eqref{eq:L}
possesses a symmetric kernel satisfying \eqref{ellip}.
They demonstrated the following pointwise bound for the solution in terms of a parabolic Riesz potential of order $2s$ and a tail term to control the long range interaction in space
\begin{eqnarray*}
|u(t_0,x_0)|  \leq C \left[\left(\fint_{\tilde{Q}_{R}(t_0,x_0)} |u| ^q \operatorname{d}\!x\operatorname{d}\!t \right)^{\frac{1}{q}}+  \left(\fint_{{\tilde{I}^\ominus_{R}(t_0)}}
   \operatorname{\widetilde{Tail}}(u(t);x_0,R)^q\operatorname{d}\!t\right)^{\frac{1}{q}}
+ \tilde{\mathcal {I}}^{\mu}_{2s}(t_0,x_0;2R) \right]
\end{eqnarray*}
for any $q\in(1,2]$, where $s\in(0,1)$ and the nonlocal tail term is given by
\begin{equation*}
\operatorname{\widetilde{Tail}}(u(t);x_0,R) = r^{2s} \int_{\R^d \setminus B_R(x_0)} \frac{|u(t,y)|}{|x_0-y|^{d+2s}} \operatorname{d}\! y.
\end{equation*}
In this case, the parabolic cylinder $\tilde{Q}_{2R}(t_0,x_0):=\tilde{I}^\ominus_{2R}(t_0)\times B_{2R}(x_0)=(t_0-(2R)^{2s},t_0)\times B_{2R}(x_0)\Subset \Omega_T$, then the  corresponding parabolic Riesz potential is intrinsically encoded by the size of this parabolic cylinder, defined as
\begin{align*}
\tilde{\mathcal {I}}^\mu_{2s}(t_0,x_0;2R)=\int_{0}^{2R}\frac{|\mu|(\tilde{Q}_\rho(t_0,x_0))}{\rho^{d}}\frac{\operatorname{d}\!\rho}{\rho}.
\end{align*}
For further earlier results on the potential theory for various parabolic equations, see \cite{DZ22, KM14-1, KM14-2, KM14-3, Ng23} and references therein.

To the best of our knowledge, there remains a notable absence of potential theory for solutions of parabolic equations involving local and nonlocal diffusion operators. Our primary focus is on establishing both pointwise and oscillatory potential estimates for solutions of the double phase parabolic equation \eqref{eq1} in terms of caloric Riesz potentials of measure data $\mu$ and the optimal tail term, assuming that the coefficients are only measurable.
To illustrate the main results of this paper, we start by presenting the definition of local weak solutions to the mixed local and nonlocal parabolic equation \eqref{eq1}.

\begin{definition} Let $\mu \in \mathcal{M}(\mathbb{R}^{d+1})$. A function
$$u\in L_{\rm loc}^2(I;W^{1,2}_{\rm loc}(\Omega))\cap L_{\rm loc}^\infty(I;L^2_{\rm loc}(\Omega))\cap L^2_{\rm loc}(I;{\mathcal L}^1_{2s}(\mathbb{R}^d))$$
is a local weak subsolution to \eqref{eq1}, if
\begin{eqnarray*}
		&& -\int_{t_1}^{t_2} \int_{\Omega} u\partial_t\varphi \operatorname{d}\! x \operatorname{d}\!t +\int_{\Omega}u(t_2,x) \varphi(t_2,x) \operatorname{d}\!x-\int_{\Omega}u(t_1,x) \varphi(t_1,x) \operatorname{d}\!x\\
&&+\int_{t_1}^{t_2} \int_{\Omega}\mathcal{A}(t,x,Du)\cdot D\varphi\operatorname{d}\! x \operatorname{d}\!t + \int_{t_1}^{t_2} \int_{\mathbb{R}^d} \int_{\mathbb{R}^d} (u(t,x)-u(t,y))(\varphi(t,x)-\varphi(t,y)) K(t,x,y) \operatorname{d}\!x \operatorname{d}\!y \operatorname{d}\! t \\
&\leq&\int_{t_1}^{t_2} \int_{\Omega} \varphi \operatorname{d}\!\mu .
\end{eqnarray*}
holds for all $[t_1,t_2] \subset I$ and nonnegative test functions
$\varphi\in W_{\rm loc}^{1,2}(I;L^2(\Omega))\cap L_{\rm loc}^2(I; W^{1,2}(\Omega))$
with compact spatial support contained in $\Omega$, where ${\mathcal L}^1_{2s}(\mathbb{R}^d)$ represents the nonlocal tail space defined as
	$${\mathcal L}^1_{2s}(\mathbb{R}^d):= \left \{v \in L^1_{\operatorname{loc}}(\mathbb{R}^d) \mathrel{\Big|} \int_{\mathbb{R}^d} \frac{|v(y)|}{1+|y|^{d+2s}} \operatorname{d}\!y < \infty \right \}.$$
A function $u$ is called a local weak supersolution to \eqref{eq1} if the previous formula holds for every nonpositive test function $\varphi$. If a function $u$ is both a local weak subsolution and a local weak supersolution, then it is referred to as a local weak solution to \eqref{eq1}.
\end{definition}

Here we do not need to impose the additional symmetry assumption on the kernel $K$ that
\begin{equation}\label{symm}
  {K}(t,x, y) = {K}(t,y, x)\,\,\mbox{for a.e.} \,\,(t, x, y) \in I \times \mathbb{R}^d \times \mathbb{R}^d,
\end{equation}
as in \cite{NNSW23}. Indeed, without loss of generality, we may assume that the kernel $K$ is symmetric. Otherwise,
let us rename variables to show that
\begin{eqnarray*}
   && \int_{t_1}^{t_2} \int_{\mathbb{R}^d} \int_{\mathbb{R}^d} (u(t,x)-u(t,y))(\varphi(t,x)-\varphi(t,y)) K(t,x,y) \operatorname{d}\!x \operatorname{d}\!y \operatorname{d}\! t \\
  &=& \int_{t_1}^{t_2} \int_{\mathbb{R}^d} \int_{\mathbb{R}^d} (u(t,x)-u(t,y))(\varphi(t,x)-\varphi(t,y)) K(t,y,x) \operatorname{d}\!y \operatorname{d}\!x \operatorname{d}\! t \\
   &=&  \int_{t_1}^{t_2} \int_{\mathbb{R}^d} \int_{\mathbb{R}^d} (u(t,x)-u(t,y))(\varphi(t,x)-\varphi(t,y)) \frac{K(t,x,y)+K(t,y,x)}{2} \operatorname{d}\!x \operatorname{d}\!y \operatorname{d}\! t.
\end{eqnarray*}
It is obvious that the new kernel $\bar{K}(t,x,y):=\frac{K(t,x,y)+K(t,y,x)}{2}$ is symmetric in the sense of \eqref{symm} and satisfies the ellipticity condition
$\Lambda^{-1}|x - y|^{-d-2s} \leq  {\bar{K}}(t,x, y) \leq \Lambda|x - y|^{-d-2s}$ for a.e. $(t, x, y) \in I\times \mathbb{R}^d \times \mathbb{R}^d$
by \eqref{ellip}. Therefore, we can replace the original kernel $K$ with a symmetric kernel $\bar K$.

In what follows, we denote the parabolic cylinder $Q_r(t_0,x_0):= I_r^{\ominus}(t_0) \times B_r(x_0)$, where the backward time interval $I_r^{\ominus}(t_0) := (t_0-r^{2},t_0)$.
We proceed by introducing the definition of the caloric Riesz potential of measure $\mu$, which will be utilized in our potential estimates.
\begin{definition}\label{Rieszpotential}
Let $\alpha\in (0,d+2)$, the parabolic Riesz potential $\mathcal {I}^\mu_{\alpha}(t_0,x_0;R)$ of $\mu\in \mathcal M(\mathbb R^{d+1})$ with order $\alpha$ is defined by
\begin{align*}
\mathcal {I}^\mu_{\alpha}(t_0,x_0;R)=\int_{0}^{R}\frac{|\mu|(Q_\rho(t_0,x_0))}{\rho^{d+2-\alpha}}\frac{\operatorname{d}\!\rho}{\rho}
\end{align*}
for any $(t_0,x_0)\in I\times \Omega$ such that the parabolic cylinder $Q_R(t_0,x_0)\subset I\times \Omega$.
\end{definition}

We are now in a position to state the first result of this paper, specifically, a pointwise potential estimate of solutions to the mixed local and nonlocal parabolic equation \eqref{eq1}.
\begin{theorem}\label{thm:PE}
Let $u$ be a local weak solution to \eqref{eq1} with $\mu \in \mathcal{M}(\mathbb{R}^{d+1})$ in $\Omega_T$,
then for any $q\in(1,2]$, any $R\in(0,1]$ and almost all $(t_0,x_0)\in I\times\Omega$ with $Q_{2R}(t_0,x_0) \Subset I \times \Omega$, there exists a
positive constant $C$ depending only on $d,\,s,\,\Lambda,\,q$
such that
\begin{eqnarray} \label{eq:PE}
|u(t_0,x_0)|  \leq C \left[\left(\fint_{Q_{R}(t_0,x_0)} |u| ^q \operatorname{d}\!x\operatorname{d}\!t \right)^{\frac{1}{q}}+  \left(\fint_{{I^\ominus_{R}(t_0)}}
   \operatorname{Tail}(u(t);x_0,R)^q\operatorname{d}\!t\right)^{\frac{1}{q}}
+ \mathcal {I}^{\mu}_{2}(t_0,x_0;2R) \right],
\end{eqnarray}
where the nonlocal tail term is defined as
\begin{equation*}
\operatorname{Tail}(u(t);x_0,R) = r^{2} \int_{\R^d \setminus B_R(x_0)} \frac{|u(t,y)|}{|x_0-y|^{d+2s}} \operatorname{d}\! y.
\end{equation*}
\end{theorem}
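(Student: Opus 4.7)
The plan is to establish \eqref{eq:PE} at a Lebesgue point $(t_0,x_0)$ of $u$ by iterating along the dyadic family of parabolic cylinders $Q_j := Q_{r_j}(t_0,x_0)$ with $r_j := R/2^j$, combining two main ingredients: an $L^q$ comparison estimate between $u$ and a solution of the associated homogeneous mixed equation, and the new $L^q$-tail H\"older estimate for the homogeneous problem advertised in the introduction. Introducing the scale-invariant excess
\begin{align*}
E_j := \left( \fint_{Q_j} |u|^q \operatorname{d}\!x \operatorname{d}\!t \right)^{1/q} + \left( \fint_{I^\ominus_{r_j}(t_0)} \operatorname{Tail}(u(t);x_0,r_j)^q \operatorname{d}\!t \right)^{1/q},
\end{align*}
the target is a contractive recursion of the form $E_{j+1} \le \tfrac12 E_j + C\,|\mu|(Q_j)/r_j^d$, whose geometric summation produces exactly $\mathcal I_2^\mu(t_0,x_0;2R)$ on the right-hand side since, by Definition \ref{Rieszpotential} with $\alpha=2$, the integrand of $\mathcal I_2^\mu$ is precisely $|\mu|(Q_\rho)/\rho^{d}$.

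First I would carry out the comparison step: on each $Q_j$, let $v_j$ be the weak solution of $\partial_t v_j - \operatorname{div}\mathcal{A}(t,x,Dv_j) + \mathcal{L} v_j = 0$ in $Q_j$ agreeing with $u$ on the parabolic boundary of $Q_j$ and on $I^\ominus_{r_j}(t_0) \times (\R^d \setminus B_{r_j}(x_0))$. Testing the difference equation satisfied by $u - v_j$ with Boccardo--Gallou\"et truncations $T_k(u - v_j)$, and exploiting monotonicity \eqref{ellipticity}, the kernel bound \eqref{ellip}, and nonnegativity of the nonlocal bilinear form on functions vanishing outside $B_{r_j}$, the classical truncation argument should yield, for every $q\in(1,2]$,
\begin{align*}
\left( \fint_{Q_j} |u - v_j|^q \operatorname{d}\!x \operatorname{d}\!t \right)^{1/q} \le C\, \frac{|\mu|(Q_j)}{r_j^d}.
\end{align*}
This delivers one layer of the eventual Riesz-potential sum at scale $r_j$.

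Next I would invoke the homogeneous $L^q$-tail H\"older estimate for $v_j$, which provides, for some $\alpha \in (0,1)$ and every $\sigma \in (0,1/2)$, an excess-type bound
\begin{align*}
\left( \fint_{Q_{\sigma r_j}} |v_j|^q \operatorname{d}\!x \operatorname{d}\!t \right)^{1/q} \le C\,\sigma^\alpha \left[\left( \fint_{Q_j} |v_j|^q \operatorname{d}\!x \operatorname{d}\!t \right)^{1/q} + \left( \fint_{I^\ominus_{r_j}(t_0)} \operatorname{Tail}(v_j(t);x_0,r_j)^q \operatorname{d}\!t \right)^{1/q}\right].
\end{align*}
Since $v_j \equiv u$ outside $B_{r_j}(x_0)$, the tail of $v_j$ coincides with that of $u$ at scale $r_j$, and the comparison estimate lets me transfer from $v_j$ back to $u$ on $Q_{\sigma r_j}$. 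A routine splitting of $\operatorname{Tail}(u;x_0,r_{j+1})$ into the annulus $B_{r_j}\setminus B_{r_{j+1}}$ and the exterior of $B_{r_j}$ then absorbs the new tail into the $L^q$ average on $Q_j$ plus the tail at the previous scale, producing the recursion $E_{j+1} \le C\sigma^\alpha E_j + C\,|\mu|(Q_j)/r_j^d$. Fixing $\sigma$ small enough that $C\sigma^\alpha \le 1/2$ closes the contraction.

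Finally, iterating this recursion and summing the geometric series would give $E_j \le C[E_0 + \sum_{k\ge 0} |\mu|(Q_k)/r_k^d] \le C[E_0 + \mathcal I_2^\mu(t_0,x_0;2R)]$ uniformly in $j$, and Lebesgue differentiation would identify $\lim_{j\to\infty}(\fint_{Q_j}|u|^q)^{1/q} = |u(t_0,x_0)|$ at almost every $(t_0,x_0)$, yielding \eqref{eq:PE}. The main obstacle I expect is precisely the tail-transfer step: bounding the $L^q$ average of $\operatorname{Tail}(u;x_0,r_{j+1})$ by the excess at scale $r_j$ without picking up a factor that destroys the geometric decay. This is exactly what the upgrade from an $L^\infty$-tail H\"older estimate (as in previous work) to the new $L^q$-tail version is designed to make possible, and it is where the argument must be executed most carefully, in particular to handle the mismatch between the parabolic scaling $r^2$ of the local operator and the natural $r^{2s}$ scaling of the nonlocal one.
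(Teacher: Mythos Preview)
Your iteration is built on an uncentered ``excess''
\[
E_j = \left( \fint_{Q_j} |u|^q \right)^{1/q} + \left( \fint_{I^\ominus_{r_j}} \operatorname{Tail}(u(t);x_0,r_j)^q \operatorname{d}\! t \right)^{1/q},
\]
and you claim a contractive recursion $E_{j+1}\le \tfrac12 E_j + C\,|\mu|(Q_j)/r_j^d$. The step that fails is the one you call the ``excess-type bound'' for the homogeneous solution:
\[
\left( \fint_{Q_{\sigma r_j}} |v_j|^q \right)^{1/q} \le C\,\sigma^\alpha \left[\left( \fint_{Q_j} |v_j|^q \right)^{1/q} + \text{tail}\right].
\]
This inequality is simply false: take $v_j$ equal to a nonzero constant. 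H\"older regularity of $v_j$ (Theorem~\ref{prop:Holderq}) controls the \emph{oscillation} of $v_j$ on shrinking cylinders, not its size. No amount of shrinking $\sigma$ will make $\fint_{Q_{\sigma r_j}}|v_j|^q$ smaller than $\fint_{Q_j}|v_j|^q$ in general, so the contraction never closes and the scheme does not produce \eqref{eq:PE}.

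The remedy, and what the paper does, is to iterate a \emph{centered} excess
\[
E(u,t_0,x_0,R)=\Bigl[\fint_{I^\ominus_R}\Bigl(\fint_{B_R}|u-(u)_{Q_R}|\Bigr)^q\Bigr]^{1/q}
+\Bigl(\fint_{I^\ominus_R}\operatorname{Tail}(u-(u)_{Q_R};x_0,R)^q\Bigr)^{1/q},
\]
for which the H\"older estimate genuinely gives geometric decay of $E(v,t_0,x_0,\cdot)$, and then to transfer this to $u$ via the comparison estimate (here an $L^\infty_t L^1_x$ bound obtained by testing with approximations of $\operatorname{sign}(u-v)$, rather than the Boccardo--Gallou\"et $L^q$ bound you propose, which would only be available for $q<\frac{d+2}{d+1}$). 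The pointwise bound on $|u(t_0,x_0)|$ then comes not from $E_j\to 0$ but from telescoping the averages,
\[
|(u)_{Q_{r_l}}|\le |(u)_{Q_R}|+\sum_{j=0}^{l-1}\bigl|(u)_{Q_{r_{j+1}}}-(u)_{Q_{r_j}}\bigr|\le |(u)_{Q_R}|+C\sum_j E(u,t_0,x_0,r_j),
\]
and only then passing to the limit via Lebesgue differentiation. In short: the quantity that contracts is the oscillation, and the size of $u$ is recovered afterwards by summing oscillations on top of $(u)_{Q_R}$.
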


We emphasize that the pointwise potential estimate \eqref{eq:PE} captures both local and nonlocal characters inherent in the mixed operator.
Given the nonlocal nature of the fractional term in \eqref{eq1}, it is essential to include a nonlocal tail term in the derivation of the potential estimate to effectively control the solutions.
Nevertheless, the leading contributor in \eqref{eq1} is the higher order local term, thereby our potential estimate involves the caloric Riesz potential $\mathcal {I}^{\mu}_{2}(t_0,x_0;2R)$ of order $2$, and the nonlocal tail term used here differs slightly from that discussed in \cite{NNSW23} in the context of purely nonlocal parabolic equations.

The most critical ingredient in the construction of potential estimates is to establish an excess decay estimate for the solution of the
mixed problem \eqref{eq1}, as presented in Lemma \ref{lemma:OscDecu}.
The fundamental scheme to prove this decay estimate comprises two key ingredients,
a comparison estimate given in Lemma \ref{thm:comparison}, and
the following local H\"older continuity with an optimal tail for the weak solution of the homogeneous problem corresponding to \eqref{eq1}, of the type
\begin{equation}\label{eq:PDE-hom}
\partial_t v-\operatorname{div}\left(\mathcal{A}(t,x, Dv)\right) +\mathcal{L} v= 0 ~~ \text{ in } \Omega_T.
\end{equation}

\begin{theorem}\label{prop:Holderq}
 Let $v$ be a local weak solution of \eqref{eq:PDE-hom} in $\Omega_T$,
then for any $q\in(1,2]$, there exists a constant $\gamma_0\in(0,1)$ depending only on $d,\,s,\,\Lambda,\,q$ such that $v \in C^{\frac{\gamma_0}{2},\gamma_0}_{\rm loc}(\Omega_T)$ and satisfying
	\begin{eqnarray}\label{holderv}
   [ v ]_{C^{\frac{\gamma_0}{2},\gamma_0}(Q_{R}(t_0,x_0))}\leq C R^{-\gamma_0}  \left[ \fint_{Q_{2R}(t_0,x_0)} |v(t,x)|\operatorname{d}\! x \operatorname{d}\! t  +\left(\fint_{I^\ominus_{2R}(t_0)}\operatorname{Tail}(v(t);x_0,2R)^q\operatorname{d}\! t\right)^{\frac{1}{q}}
\right]
 \end{eqnarray}
for any $R\in(0,1]$ and $(t_0,x_0)\in I\times\Omega$ with $Q_{2R}(t_0,x_0)\Subset I\times\Omega$, where
\begin{equation*}
  [ u ]_{C^{\frac{\gamma_0}{2},\gamma_0}(Q_{R}(t_0,x_0))}:=\sup_{\substack{(t,x),(\tau,y)\in Q_{R}(t_0,x_0)\\ (t,x)\neq(\tau,y) } }\frac{|u(t,x)-u(\tau,y)|}{\left(|t-\tau|^{\frac{1}{2}}+|x-y|\right)^{\gamma_0}},
\end{equation*}
and the positive constant $C$ depends only on $d,\,s,\,\Lambda,\,q$.
\end{theorem}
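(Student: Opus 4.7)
The plan is to follow the De Giorgi scheme adapted to the mixed local-nonlocal parabolic setting, with a novel refinement in the treatment of the nonlocal tail to achieve $L^q$-in-time integrability rather than the usual $L^\infty$. After standard rescaling and translation, it suffices to prove an oscillation decay of the form $\operatorname{osc}_{Q_{\sigma r}} v \leq \theta\operatorname{osc}_{Q_r} v$ modulo a tail remainder, for some constants $\sigma,\theta\in(0,1)$ depending only on $d,s,\Lambda,q$; iterating this across dyadic scales and summing geometrically then yields the H\"older exponent $\gamma_0$ and the displayed estimate via a standard Campanato-type characterization.

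The first main ingredient is a local $L^\infty$ bound of the schematic form
\[
\|v\|_{L^\infty(Q_{r/2})}\leq C\left[\left(\fint_{Q_r}|v|^q\operatorname{d}\!x\operatorname{d}\!t\right)^{1/q}+\left(\fint_{I^\ominus_r}\operatorname{Tail}(v(t);x_0,r)^q\operatorname{d}\!t\right)^{1/q}\right],
\]
established via De Giorgi iteration on the level sets $(v-k)_+$. The Caccioppoli inequality for the mixed operator, when tested against the standard truncation, yields the usual local parabolic energy (controlled via Sobolev embedding) together with a nonlocal cross-term of the schematic form
\[
\int_{I^\ominus_r}\int_{B_r}(v-k)_+(t,x)\phi(x)^2\operatorname{Tail}((v-k)_+(t);x_0,r)\operatorname{d}\!x\operatorname{d}\!t,
\]
where $\phi$ is a spatial cut-off. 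The key departure from previous work is that, rather than pulling $\sup_t\operatorname{Tail}$ outside the integral (which would produce an $L^\infty$-tail), I would apply H\"older's inequality in $t$ to split this as
\[
\left(\int_{I^\ominus_r}\operatorname{Tail}((v-k)_+(t);x_0,r)^q\operatorname{d}\!t\right)^{1/q}\left(\int_{I^\ominus_r}\|(v-k)_+\phi^2\|_{L^1(B_r)}^{q'}\operatorname{d}\!t\right)^{1/q'},
\]
with $q'=q/(q-1)$. The first factor is bounded in terms of the $L^q$-tail of $v$ and the level $k$, while the second is absorbed into the local energy via Young's inequality combined with the super-level-set measure shrinkage intrinsic to the De Giorgi iteration. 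The restriction $q\leq 2$ matches the $L^2$-energy framework of the local part, ensuring $q'\geq 2$ fits the available Sobolev scale.

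The second ingredient is the oscillation improvement. Setting $M:=\sup_{Q_r}v$ and $m:=\inf_{Q_r}v$, apply the local $L^\infty$ estimate just derived to the nonnegative truncations $M-v$ and $v-m$, combined with a De Giorgi-type alternative: either $\{v\geq(M+m)/2\}\cap Q_r$ has large relative measure, forcing $\inf_{Q_{r/2}}v\geq m+\delta(M-m)$ for some $\delta=\delta(d,s,\Lambda,q)>0$, or the symmetric alternative gives $\sup_{Q_{r/2}}v\leq M-\delta(M-m)$. In either case one deduces $\operatorname{osc}_{Q_{r/2}}v\leq(1-\delta)\operatorname{osc}_{Q_r}v$ up to a tail remainder. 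Iterating across dyadic scales and handling the tail additively via elementary manipulations (the $L^q$-tail on a smaller cylinder is controlled by the $L^q$-tail on the outer cylinder plus averaged values of $|v|$) then yields the claimed H\"older estimate.

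The main obstacle I anticipate is closing the De Giorgi iteration with the $L^q$-tail uniformly for the full range $q\in(1,2]$: at each step the nonlocal contribution must contract fast enough to be summable, which forces delicate geometric choices of the level parameters $k_n$ and the cut-off radii so that the final constants depend only on $d,s,\Lambda,q$. A secondary technical point is verifying, within the iteration, that successive tails are controlled additively by the original tail on $Q_R$ with constants uniform in the iteration step; this is routine but must be checked carefully because the conjugate exponent $q'$ introduced by the time-integration does not interact with the spatial nonlocal structure in a completely trivial way.
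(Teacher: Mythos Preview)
Your overall De Giorgi strategy and, crucially, your idea of applying H\"older's inequality in the time variable to trade the $L^\infty$-tail for an $L^q$-tail are both correct and are exactly the mechanism the paper exploits. However, the places where this trick is deployed differ, and your oscillation-reduction step has a genuine gap.

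For local boundedness the paper does \emph{not} insert H\"older-in-time into the De Giorgi iteration. Instead it first runs the standard one-sequence iteration to reach a bound with $L^\infty$-tail, and then upgrades this to an $L^1$-tail by a spatial cut-off trick: writing $v=\psi v+(1-\psi)v$ with $\psi\in C_c^\infty(B_{5R/6})$, the function $\psi v$ has compactly supported tail controlled by $\sup_t\Vert v\Vert_{L^2(B_{5R/6})}$ (which a separate Caccioppoli-plus-iteration bound already dominates by the $L^1$-tail), while $(1-\psi)v$ contributes a forcing term whose time integral is directly an $L^1$-tail. Your direct route could perhaps be closed, but it would force a genuine \emph{two-sequence} iteration (tracking both the level-set measure and its $L^{q'}$-in-time average), which you have not set up; the paper reserves that device for the growth lemma rather than for boundedness.

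The real gap is the oscillation step. Applying the $L^\infty$ estimate to $M-v$ and $v-m$ yields only \emph{upper} bounds on these quantities and cannot by itself produce the lower-bound improvement $\inf_{Q_{r/2}}v\ge m+\delta(M-m)$ that you claim. In the parabolic setting, passing from ``$\{v\ge(M+m)/2\}$ has relative measure $\ge\tfrac12$ in $Q_r$'' to a pointwise lower bound on a subcylinder requires three separate ingredients that your outline omits: (i) forward-in-time propagation of measure information from a single good time slice, (ii) a measure-shrinking lemma reducing the sublevel-set density from $\le\tfrac12$ down to an arbitrarily small $\lambda$, and only then (iii) a De Giorgi density-to-pointwise lemma. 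The H\"older-in-time splitting you identified is used precisely inside (i)--(iii), where the $L^q$-tail appears as a smallness alternative and the $L^{q'}$ factor feeds the two-sequence recursion. The elliptic-style shortcut you sketch does not close here because density $\tfrac12$ is far too large to trigger the De Giorgi lemma directly, and there is no parabolic mechanism to improve it without (i) and (ii).
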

This result might also be interesting for its own sake. In contrast to H\"older regularity of solutions to mixed local and nonlocal parabolic equations mentioned above \cite{FSZ22, GK24}, they share a common additional assumption regarding the boundedness of the nonlocal tail term with respect to the time variable. However, we find that further proving potential estimates based on the local H\"older estimate with an $L^\infty$-Tail is unfeasible, and this assumption often does not hold in the solution space associated with weak solutions. Inspired by the recent work of Kassman and Weidner \cite{KW23}, who obtained H\"older regularity of solutions to nonlocal parabolic equations without an a priori bounded assumption on the nonlocal tail term, here we improve the tail term in the previous H\"older estimates of solutions to mixed problems with respect to the time variable from $L^\infty$ to $L^q$ for any $q\in(1,2]$. Since the weak solution $v$ belonging to $L^2_{\rm loc}(I;{\mathcal L}^1_{2s}(\mathbb{R}^d))$ apparently guarantees the nonlocal tail term $\operatorname{Tail}(v(t);x_0,2R)\in L^q_{\rm loc}(I)$ for any $q\in(1,2]$, our result does not impose any unnatural restriction on solutions. When $q=1$ in \eqref{holderv}, analogous to the discussion in \cite{KW23}, there exists a counterexample showing that the solution $v$ is not H\"older continuous, thus justifying the optimality of the nonlocal tail term in the H\"older continuity estimate \eqref{holderv}.

The pointwise estimate \eqref{eq:PE} is only able to control the size of the solution, another novel contribution here is the following oscillation estimate, which allows to express the H\"{o}lder continuity of the solution to the mixed local and nonlocal parabolic equation \eqref{eq1} in terms of the caloric Riesz potential of order $2-\gamma$.
\begin{theorem}\label{thm:OPE}
Let $u$ be a local weak solution to \eqref{eq1} with $\mu \in \mathcal{M}(\mathbb{R}^{d+1})$ in $\Omega_T$.
For any $q\in(1,2]$ and $\tilde{\gamma}\in[0,\gamma_0)$, any $R\in(0,1]$ and $(t_0,x_0)\in I\times\Omega$ with $Q_{2R}(t_0,x_0) \Subset I \times \Omega$, there holds that
\begin{eqnarray} \label{eq:OPE}
&&|u(t,x)-u(\tau,y)| \nonumber\\
 &\leq& C
\left[\left(\fint_{Q_{R}(t_0,x_0)} |u| ^q \operatorname{d}\!x\operatorname{d}\!t \right)^{\frac{1}{q}}+  \left(\fint_{{I^\ominus_{R}(t_0)}}
   \operatorname{Tail}(u(t);x_0,R)^q\operatorname{d}\!t\right)^{\frac{1}{q}}
 \right]\left(\frac{|t-\tau|^{\frac{1}{2}}+|x-y|}{R}\right)^\gamma \nonumber\\
&&+C\left[\mathcal {I}^{\mu}_{2-\gamma}(t,x;R)+\mathcal {I}^{\mu}_{2-\gamma}(\tau,y;R)\right]\left(|t-\tau|^{\frac{1}{2}}+|x-y|\right)^\gamma
\end{eqnarray}
for almost everywhere $(t,x),\,(\tau,y)\in Q_{\frac{R}{8}}(t_0,x_0)$ and every $\gamma\in[0,\tilde{\gamma}]$,
where the positive constant $C$ depends only on $d,\,s,\,\Lambda,\,q,\,\tilde{\gamma}$, and the constant $\gamma_0$ is given by Theorem \ref{prop:Holderq}\,.
\end{theorem}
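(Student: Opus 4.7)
The plan is to prove Theorem \ref{thm:OPE} by combining two ingredients. First, I iterate the excess decay estimate stated in Lemma \ref{lemma:OscDecu} along a sequence of nested dyadic parabolic cylinders at an arbitrary (parabolic) Lebesgue point, producing a Campanato-type quantitative estimate in terms of a Riesz potential of order $2-\gamma$. Second, I assemble two such estimates via the triangle inequality through a common mean value to obtain the two-point oscillation bound \eqref{eq:OPE}.

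\emph{Step 1 (iteration at one point).} Fix a Lebesgue point $(\bar t,\bar x)\in Q_{R/8}(t_0,x_0)$ and set $r_k := R/(8\cdot 2^k)$, $Q_k := Q_{r_k}(\bar t,\bar x)$. Denoting $E_k := \fint_{Q_k}|u-(u)_{Q_k}|\operatorname{d}\! x\operatorname{d}\! t$, the excess decay estimate of Lemma \ref{lemma:OscDecu}, applied between scales $r_k$ and $r_{k+1}$, yields a recursive inequality of the form
\begin{equation*}
E_{k+1} \;\leq\; 2^{-\gamma_0}\, E_k + C\, r_k^{\gamma_0}\, T_k + C\, r_k^{\gamma_0}\cdot \frac{|\mu|(Q_k)}{r_k^{d}},
\end{equation*}
where $T_k$ collects the $L^q$-tail contribution at scale $r_k$ and the last term is the measure-data source written in a form suggestive of a Riesz density. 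For any $\gamma\in[0,\tilde\gamma]\subset[0,\gamma_0)$, multiplying by $r_{k+1}^{-\gamma}$ and summing a telescoping series of weighted excesses produces, after using the strict gap $\gamma_0-\gamma>0$ for geometric convergence,
\begin{equation*}
\sup_{k\ge 0} r_k^{-\gamma}E_k \;\leq\; C\,R^{-\gamma}\bigl[A(R) + T(R)\bigr] + C\sum_{k\geq 0} r_k^{-\gamma}\cdot r_k^{\gamma_0}\cdot \frac{|\mu|(Q_k)}{r_k^{d}}\cdot 2^{-(\gamma_0-\gamma)k},
\end{equation*}
where $A(R)$ is the $L^q$-average and $T(R)$ the $L^q$-tail appearing on the right-hand side of \eqref{eq:OPE}. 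Recognizing the sum as a dyadic discretization of $\int_0^R \rho^{-d-2+\gamma+2}|\mu|(Q_\rho)\,\operatorname{d}\!\rho/\rho$ gives the Riesz-potential identification $\mathcal{I}^\mu_{2-\gamma}(\bar t,\bar x;R)$. Passing to the limit along $r_k\to 0$ at Lebesgue points provides the pointwise Campanato bound
\begin{equation*}
|u(\bar t,\bar x)-(u)_{Q_r(\bar t,\bar x)}|\;\leq\; C\,r^{\gamma}\bigl[R^{-\gamma}(A(R)+T(R)) + \mathcal{I}^\mu_{2-\gamma}(\bar t,\bar x;R)\bigr]
\end{equation*}
for every $0<r\leq R/8$.

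\emph{Step 2 (comparing two points).} Given two Lebesgue points $(t,x),(\tau,y)\in Q_{R/8}(t_0,x_0)$, put $\rho := |t-\tau|^{1/2}+|x-y|$ and fix a parabolic cylinder $Q^\ast$ of radius comparable to $\rho$ containing both points and contained in $Q_{R/4}(t_0,x_0)$. Writing
\begin{equation*}
|u(t,x)-u(\tau,y)|\;\leq\; |u(t,x)-(u)_{Q^\ast}| + |u(\tau,y)-(u)_{Q^\ast}|
\end{equation*}
and applying the pointwise Campanato bound from Step 1 at each of the two points (with $r\simeq \rho$) yields the two $\mathcal{I}^\mu_{2-\gamma}$ terms multiplied by $\rho^\gamma$, while the initial-data terms combine into the $(\rho/R)^\gamma$-weighted average and tail quantities on $Q_R(t_0,x_0)$.

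\emph{Main obstacle.} The technical heart of the argument lies in Step 1, in verifying that the dyadic weighted sum produces exactly the Riesz potential of order $2-\gamma$ and in handling the tail terms $T_k$ at successive scales. The latter requires decomposing $\mathbb{R}^d\setminus B_{r_k}$ into annuli, interpolating the $L^q$-norm of the tail at scale $r_k$ against the $L^q$-norm at the outer scale $R$, and absorbing the unavoidable losses into the geometric slack $\gamma_0-\gamma>0$. It is precisely this absorption that forces the strict inequality $\tilde\gamma<\gamma_0$ and produces the dependence of the constant on $\tilde\gamma$, and it distinguishes the present oscillation estimate from the size estimate of Theorem \ref{thm:PE}, where the Riesz potential appears at the borderline order $2$.
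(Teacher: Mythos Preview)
Your overall strategy---iterate the excess decay of Lemma~\ref{lemma:OscDecu} along shrinking cylinders at a Lebesgue point to get a Campanato-type bound, then compare two points through a common mean---is exactly the paper's. The paper packages Step~1 via the sharp and fractional maximal operators $\mathbf{M}^{\sharp}_{\gamma,R}(u)$ and $\mathbf{M}_{2-\gamma,R}(\mu)$, but that is cosmetic.

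There is, however, a misreading that distorts your details. You set $E_k=\fint_{Q_k}|u-(u)_{Q_k}|$ and introduce a separate tail term $T_k$, but the excess functional $E(u,t,x,r)$ in the paper---and hence both sides of Lemma~\ref{lemma:OscDecu}---\emph{already} contains the $L^q$-tail. The lemma therefore gives directly
\[
E(u,\bar t,\bar x,\sigma\rho)\leq C_0\,\sigma^{\gamma_0}\,E(u,\bar t,\bar x,\rho)+C(\sigma)\,\rho^{-d}\,|\mu|(Q_\rho(\bar t,\bar x)),
\]
with no $T_k$ and no $r_k^{\gamma_0}$ prefactor on the measure term. The annulus decomposition and tail interpolation you flag as the ``main obstacle'' are the content of the \emph{proof} of Lemma~\ref{lemma:OscDecu}, not work to be redone here. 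Once you use the full excess, dividing by $(\sigma\rho)^\gamma$ and choosing $\sigma$ so that $C_0\sigma^{\gamma_0-\tilde\gamma}=\tfrac12$ yields the recursion $(\sigma\rho)^{-\gamma}E(\sigma\rho)\le\tfrac12\,\rho^{-\gamma}E(\rho)+C\,\rho^{-d-\gamma}|\mu|(Q_\rho)$, whose dyadic sum is $\mathcal{I}^\mu_{2-\gamma}$; note the correct density is $\rho^{-d-\gamma}$, not the $\rho^{-d+\gamma}$ you wrote. With these corrections your argument goes through and coincides with the paper's.
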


\begin{remark}
The existence theory and compactness properties of a class of very weak solutions obtained by limit of approximations (SOLA) to the mixed problem \eqref{eq1} have been developed in detail in \cite{BKK24}.
It is worth mentioning that our main theorems still hold for SOLA when $q\in(1,\frac{n+2}{n+1})$ by utilizing compactness results through a standard approximation process.
\end{remark}

The remainder of this paper is organized as follows. Section \ref{section2} presents some preliminary results and establishes the Caccioppoli inequality. Section \ref{section3} is devoted to proving the local H\"older regularity stated in Theorem \ref{prop:Holderq} based on the De Giorgi-Nash-Moser theory. In section \ref{section4}, we derive the relevant comparison and excess decay estimates.
Finally, in the last section, we complete the proof of Theorem \ref{thm:PE} and Theorem \ref{thm:OPE} regarding pointwise and oscillation potential estimates.

\section{Preliminaries}\label{section2}

In this section, we first collect some useful auxiliary results and further establish a series of Caccioppoli-type inequalities for the homogeneous mixed local and nonlocal parabolic equation \eqref{eq:PDE-hom}. These results are pivotal for the proof of our main theorems.
\subsection{Auxiliary results}

Let us begin with the following iterative lemma, two technical lemmas concerning the geometric convergence of sequences of numbers, together with a poincar\'{e}-type inequality. These are essential components for carrying out the De Giorgi theory to prove the local H\"{o}lder continuity Theorem \ref{prop:Holderq}.

\begin{lemma}\label{iterate3} {\rm(cf. \cite[Lemma 6.1]{Giu})}
Let $f$ be a bounded nonnegative function defined in the interval $[r,R]$.
Assume that for $ r\leq \rho_1<\rho_2\leq R$ we have
\begin{equation*}
  f(\rho_1)\leq \vartheta f(\rho_2)+\frac{c_1}{(\rho_2-\rho_1)^{\alpha}}+\frac{c_2}{(\rho_2-\rho_1)^{\beta}}+c_3,
\end{equation*}
where $c_1,\,c_2,\,c_3\geq 0$, $\alpha>\beta>0$, and $0\leq\vartheta<1$. Then there exists a positive constant $C=C(\alpha,\vartheta)$
such that
\begin{equation*}
  f(r)\leq C\left[\frac{c_1}{(R-r)^{\alpha}}+\frac{c_2}{(R-r)^{\beta}}+c_3\right].
\end{equation*}
\end{lemma}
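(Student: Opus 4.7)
The plan is to iterate the recursion along a carefully chosen geometric sequence of radii tending to $R$, so that the prefactors $\vartheta^n$ kill the growth coming from the $\tau^{-\alpha n}$ and $\tau^{-\beta n}$ factors, leaving a convergent series. Specifically, I fix $\tau\in(0,1)$ to be selected later and set $\rho_0 = r$, $\rho_{n+1} = \rho_n + (1-\tau)\tau^n(R-r)$, so that $\rho_n \uparrow R$ and the increment is $\rho_{n+1}-\rho_n = (1-\tau)\tau^n(R-r)$. Applying the hypothesis to the pair $(\rho_n,\rho_{n+1})$ yields
\begin{equation*}
f(\rho_n) \leq \vartheta\, f(\rho_{n+1}) + \frac{c_1\,\tau^{-\alpha n}}{[(1-\tau)(R-r)]^{\alpha}} + \frac{c_2\,\tau^{-\beta n}}{[(1-\tau)(R-r)]^{\beta}} + c_3.
\end{equation*}

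Iterating this bound $k$ times and telescoping the $\vartheta$ factors gives
\begin{equation*}
f(r) \leq \vartheta^{k} f(\rho_k) + \sum_{n=0}^{k-1} \vartheta^{n}\left[\frac{c_1\,\tau^{-\alpha n}}{[(1-\tau)(R-r)]^{\alpha}} + \frac{c_2\,\tau^{-\beta n}}{[(1-\tau)(R-r)]^{\beta}} + c_3\right].
\end{equation*}
The decisive step is the choice of $\tau$: I take $\tau\in(0,1)$ such that $\vartheta\tau^{-\alpha}<1$, for example $\tau=\bigl(\frac{1+\vartheta}{2}\bigr)^{1/\alpha}$ when $\vartheta>0$ (any $\tau\in(0,1)$ works when $\vartheta=0$). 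Because $\alpha>\beta>0$ and $\tau\in(0,1)$ automatically imply $\vartheta\tau^{-\beta}\leq\vartheta\tau^{-\alpha}<1$, both geometric series $\sum_n(\vartheta\tau^{-\alpha})^n$ and $\sum_n(\vartheta\tau^{-\beta})^n$, as well as $\sum_n\vartheta^n$, converge, and their sums depend only on $\alpha$ and $\vartheta$.

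Finally, letting $k\to\infty$: the boundedness of $f$ together with $\vartheta<1$ ensures $\vartheta^{k} f(\rho_k)\to 0$, while the three series deliver constants of the form $C(\alpha,\vartheta)[(1-\tau)(R-r)]^{-\alpha}$, $C(\alpha,\vartheta)[(1-\tau)(R-r)]^{-\beta}$, and $C(\alpha,\vartheta)$ multiplied by $c_1$, $c_2$, and $c_3$ respectively; since $\tau$ depends only on $\alpha$ and $\vartheta$, this is precisely the claimed estimate. There is no real obstacle here beyond the standard bookkeeping; the only point that requires attention is confirming that the faster-growing series (the $\tau^{-\alpha n}$ one) is tamed by the choice of $\tau$, which automatically controls the $\tau^{-\beta n}$ series from $\alpha>\beta$ and justifies the stated $C=C(\alpha,\vartheta)$ dependence.
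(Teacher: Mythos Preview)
Your proof is correct and is precisely the standard argument for this iteration lemma. The paper does not give its own proof of this statement; it merely cites \cite[Lemma~6.1]{Giu}, and the proof you wrote is essentially the one found there: iterate along a geometric sequence $\rho_{n+1}=\rho_n+(1-\tau)\tau^n(R-r)$, choose $\tau=\tau(\alpha,\vartheta)$ so that $\vartheta\tau^{-\alpha}<1$, and use boundedness of $f$ to discard $\vartheta^k f(\rho_k)$ in the limit.
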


\begin{lemma}\label{iterate1} {\rm(cf. \cite[Chapter 1, Lemma 4.1]{Dib93})}
Let $\{A_i\}$ be a sequence of positive numbers such that
$$A_{i+1}\leq Cb^iA_i^{1+\beta},$$
where $C,\,b>1$ and $\beta>0$ are given numbers. If $A_0\leq C^{-\frac{1}{\beta}}b^{-\frac{1}{\beta^2}}$, then $\{A_i\}$ converges to zero as $i\rightarrow\infty$.
\end{lemma}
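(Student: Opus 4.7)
The plan is to prove the geometric convergence by a direct induction, showing that the sequence $\{A_i\}$ is dominated by a decaying geometric sequence. More precisely, I would establish the bound
\[
A_i \leq A_0\, b^{-i/\beta}\qquad\text{for every } i\geq 0,
\]
from which the conclusion $A_i\to 0$ follows immediately, since $b>1$ and $\beta>0$.

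The base case $i=0$ is trivial. For the inductive step, assuming $A_i\leq A_0 b^{-i/\beta}$, the recursion gives
\[
A_{i+1}\leq C b^i A_i^{1+\beta}\leq C b^i\bigl(A_0 b^{-i/\beta}\bigr)^{1+\beta}
= C A_0^{1+\beta}\, b^{\,i-i(1+\beta)/\beta}
= C A_0^{1+\beta}\, b^{-i/\beta}.
\]
To close the induction it suffices that the right-hand side is bounded by $A_0 b^{-(i+1)/\beta}$; after cancelling $A_0 b^{-i/\beta}$ this reduces to $C A_0^\beta\leq b^{-1/\beta}$, i.e. $A_0\leq C^{-1/\beta} b^{-1/\beta^2}$, which is precisely the smallness hypothesis.

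The only delicate point is the choice of the ansatz $A_0 b^{-i/\beta}$: the exponent $1/\beta$ is forced by the fact that raising a geometric term of ratio $b^{-1/\beta}$ to the power $1+\beta$ produces the extra factor $b^{-1}$ needed to absorb the $b^i$ appearing in the recurrence, and the remaining slack is precisely what the smallness condition on $A_0$ controls. Once this ansatz is identified the argument is elementary, and I do not anticipate any genuine obstacle.
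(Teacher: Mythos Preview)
Your proof is correct and is precisely the standard induction argument for this classical iteration lemma; the paper itself does not give a proof but simply cites \cite[Chapter~1, Lemma~4.1]{Dib93}, where exactly this argument appears. There is nothing to add.
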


\begin{lemma}\label{iterate2} {\rm(cf. \cite[Chapter 1, Lemma 4.2]{Dib93})}
Let $\{A_i\}$ and $\{B_i\}$ be sequences of positive numbers, satisfying the recursive inequalities
\begin{equation*}
  \left\{\begin{array}{r@{\ \ }c@{\ \ }ll}
  A_{i+1}&\leq& Cb^i(A_i^{1+\beta}+A_i^{\beta}B_i^{1+\alpha}),\\
  B_{i+1}&\leq& Cb^i(A_i+B_i^{1+\alpha}),
  \end{array}\right.
\end{equation*}
where $C,\,b>1$ and $\alpha,\,\beta>0$ are given numbers.  If
$$A_0+B_0^{1+\alpha}\leq (2C)^{-\frac{1+\alpha}{\sigma}}b^{-\frac{1+\alpha}{\sigma^2}}$$
with $\sigma=\min\{\alpha,\,\beta\}$, then
$\{A_i\}$ and $\{B_i\}$ tend to zero as $i\rightarrow\infty$.
\end{lemma}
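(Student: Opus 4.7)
The plan is to reduce the coupled two-sequence recursion to the single-variable setting of Lemma \ref{iterate1} by introducing the combined quantity
\[
Y_i := \max\bigl\{A_i,\, B_i^{1+\alpha}\bigr\}, \qquad \sigma := \min\{\alpha,\beta\}.
\]
I would then show that $\{Y_i\}$ satisfies a clean geometric-type inequality of the form $Y_{i+1}\le \tilde C \tilde b^{\,i} Y_i^{1+\sigma}$ for suitable $\tilde C,\tilde b>1$ depending only on $C,b,\alpha,\beta$, as long as $Y_i\le 1$.

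To obtain this inequality I would first control $A_{i+1}$: since $A_i\le Y_i$ and $B_i^{1+\alpha}\le Y_i$, one has $A_i^{1+\beta}+A_i^\beta B_i^{1+\alpha}\le 2 Y_i^{1+\beta}$, and using $Y_i\le 1$ together with $\sigma\le\beta$ gives $A_{i+1}\le 2Cb^i Y_i^{1+\sigma}$. For the second sequence, $A_i+B_i^{1+\alpha}\le 2Y_i$ yields $B_{i+1}\le 2Cb^i Y_i$; raising to the power $1+\alpha$ and invoking $Y_i\le 1$ together with $\sigma\le\alpha$ produces $B_{i+1}^{1+\alpha}\le (2C)^{1+\alpha} b^{i(1+\alpha)} Y_i^{1+\sigma}$. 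Taking the maximum of the two bounds and setting $\tilde C := (2C)^{1+\alpha}$, $\tilde b := b^{1+\alpha}$ delivers the desired single-variable recursion $Y_{i+1}\le \tilde C \tilde b^{\,i} Y_i^{1+\sigma}$.

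At this point Lemma \ref{iterate1}, applied to $\{Y_i\}$ with parameters $\tilde C,\tilde b,\sigma$, finishes the argument: its smallness threshold is $\tilde C^{-1/\sigma}\tilde b^{-1/\sigma^2}=(2C)^{-(1+\alpha)/\sigma} b^{-(1+\alpha)/\sigma^2}$, which is exactly the number appearing in the hypothesis, and since $Y_0\le A_0+B_0^{1+\alpha}$, the hypothesis places $Y_0$ below this threshold, forcing $Y_i\to 0$ and hence $A_i,B_i\to 0$. The only bookkeeping issue is that the reduction presupposed $Y_i\le 1$; but the threshold is already strictly less than $1$ (since $C,b>1$), and the proof of Lemma \ref{iterate1} produces monotonically decaying bounds on $Y_i$, so the constraint $Y_i\le 1$ propagates along the iteration for free. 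The only subtle choice in the argument is the exponent $1+\alpha$ in the definition of $Y_i$: this precise power is what equalizes the scaling of $B_{i+1}^{1+\alpha}$ with that of $A_{i+1}$ and thereby manufactures the common effective exponent $1+\sigma$, which is what ultimately lets the one-variable lemma apply.
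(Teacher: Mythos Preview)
The paper does not supply its own proof of this lemma; it merely cites \cite[Chapter~1, Lemma~4.2]{Dib93}. Your argument is correct and is in fact the standard one from that reference: combining the two sequences into $Y_i=\max\{A_i,B_i^{1+\alpha}\}$, deriving a single recursion $Y_{i+1}\le (2C)^{1+\alpha}b^{(1+\alpha)i}Y_i^{1+\sigma}$, and then invoking Lemma~\ref{iterate1}. The only point worth making explicit is that the constraint $Y_i\le 1$ should be folded into the induction establishing $Y_i\le \tilde b^{-i/\sigma}Y_0$ rather than stated as a separate afterthought, since the recursion for $Y_{i+1}$ is only available once $Y_i\le 1$ is already known; but as you note, the threshold is strictly less than $1$ and the inductive bound is monotone, so this closes without difficulty.
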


\begin{lemma}\label{auxilem1}{\rm(cf. \cite[Chapter 2, Lemma 2.2]{DGV12})}
Let $f\in W^{1,1}(B_R)$ and $k< h$, then there exists a positive constant $C$ depending only on $d$ such that
\begin{equation*}
  (h-k)|\{x\in B_R\mid f(x)\leq k\}|\leq \frac{C R^{d+1}}{|\{x\in B_R\mid f(x)\geq h\}|}\int_{\{x\in B_R\mid k<f(x)\leq h\}}|Df|\operatorname{d}\! x.
\end{equation*}
\end{lemma}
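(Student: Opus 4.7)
The plan is to reduce the claimed inequality to the classical Poincaré inequality on the ball $B_R$ by means of a two-sided Lipschitz truncation of $f$. Set
\begin{equation*}
A := \{x\in B_R : f(x)\leq k\}, \quad B := \{x\in B_R : f(x)\geq h\}, \quad D := \{x\in B_R : k<f(x)\leq h\},
\end{equation*}
so that these three sets partition $B_R$ up to a null set. If $|B|=0$ the asserted inequality is vacuous, so I may assume $|B|>0$; the bound is then equivalent to proving
\begin{equation*}
(h-k)\,|A|\,|B| \leq C R^{d+1}\int_D |Df| \operatorname{d}\! x.
\end{equation*}

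I introduce the truncated function $g := \min\{h-k,\,(f-k)_+\}\in W^{1,1}(B_R)$, which satisfies $g\equiv 0$ on $A$, $g\equiv h-k$ on $B$, takes values in $[0,h-k]$ throughout $B_R$, and by the chain rule for Sobolev functions obeys $Dg = (Df)\chi_D$ almost everywhere. Writing $\bar g := \fint_{B_R} g \operatorname{d}\!x$, which automatically lies in $[0,h-k]$, the classical Poincaré inequality on $B_R$ yields
\begin{equation*}
\int_{B_R} |g-\bar g| \operatorname{d}\! x \leq C R\int_{B_R} |Dg|\operatorname{d}\! x = CR\int_D |Df|\operatorname{d}\! x.
\end{equation*}
Using the explicit values of $g$ on $A$ and $B$, the left-hand side is bounded below by $|A|\bar g + |B|(h-k-\bar g)$, hence
\begin{equation*}
|A|\bar g + |B|(h-k-\bar g)\leq CR\int_D |Df|\operatorname{d}\! x.
\end{equation*}

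To finish, I split into two cases on $\bar g$. If $\bar g \geq (h-k)/2$, the $A$-term in the lower bound already gives $(h-k)|A|/2\leq CR\int_D |Df|\operatorname{d}\!x$; multiplying by $|B|$ and using $|B|\leq |B_R|\leq CR^{d}$ produces the desired inequality. If instead $\bar g < (h-k)/2$, the $B$-term gives $(h-k)|B|/2\leq CR\int_D |Df|\operatorname{d}\!x$; multiplying by $|A|$ and using $|A|\leq |B_R|\leq CR^{d}$ again yields $(h-k)|A||B|\leq C R^{d+1}\int_D |Df|\operatorname{d}\! x$, which, upon dividing by $|B|$, is precisely the conclusion. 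The only genuine ingredient is the Poincaré inequality on a ball; the only point requiring a little care is justifying the chain-rule identity $Dg=(Df)\chi_D$ a.e., which follows from the standard Stampacchia truncation result for Sobolev functions. I do not anticipate any substantive obstacle beyond this.
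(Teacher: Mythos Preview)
Your argument is correct. The paper does not supply its own proof of this lemma; it merely cites \cite[Chapter~2, Lemma~2.2]{DGV12}, so there is no in-paper proof to compare against. Your route via the two-sided truncation $g=\min\{h-k,(f-k)_+\}$ and the $W^{1,1}$ Poincar\'e inequality on $B_R$ is exactly the standard De~Giorgi-type derivation found in that reference, and all steps (the chain-rule identity $Dg=(Df)\chi_D$ a.e., the lower bound $\int_{B_R}|g-\bar g|\ge |A|\bar g+|B|(h-k-\bar g)$, and the case split on $\bar g$) are sound. One cosmetic point: the sets $D=\{k<f\le h\}$ and $B=\{f\ge h\}$ overlap on $\{f=h\}$, so they do not strictly partition $B_R$; this is harmless since you never actually use disjointness, only the values of $g$ on $A$ and $B$ and the support of $Dg$.
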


\subsection{Caccioppoli inequality}
The set of functions satisfying the Caccioppoli inequality is typically called a De Giorgi class. In this subsection, we concentrate on the Caccioppoli-type inequalities for the homogeneous mixed local and nonlocal parabolic equation \eqref{eq:PDE-hom}.
In what follows, $C$ denotes a constant whose value may vary from line to line, and only the relevant dependencies are specified in parentheses.

\begin{lemma}
\label{lemma:Cacc}
Let $v$ be a local weak subsolution to \eqref{eq:PDE-hom} in $\Omega_T$,
then for any $R\in(0,1]$ and $(t_0,x_0)\in I\times\Omega$ with $I^\ominus_{2R}(t_0)\times B_{2R}(x_0)\Subset I\times\Omega$, there exists a positive constant $C$ depending only on $d,\,s,\,\Lambda$ such that
\begin{eqnarray}\label{eq:Cacc}
 && \sup_{t\in I_{r}^{\ominus}(t_0)}\int_{B_r(x_0)} w^2_{+}(t,x) \operatorname{d}\! x +\int_{I_{r}^{\ominus}(t_0)}\int_{B_r(x_0)} |Dw_{+}(t,x)|^2\operatorname{d}\! x\operatorname{d}\! t \nonumber\\
 &&+\int_{I_{r}^{\ominus}(t_0)}\int_{B_r(x_0)} \int_{B_r(x_0)}\frac{|w_{+}(t,x)-w_{+}(t,y)|^2}{|x-y|^{d+2s}}\operatorname{d}\! x \operatorname{d}\! y\operatorname{d}\!t \nonumber\\
 &&+ r^{-d-2s} \int_{I_{r}^{\ominus}(t_0)} \int_{B_r(x_0)} \int_{B_r(x_0)} w_{+}(t,x) w_{-}(t,y) \operatorname{d}\! x \operatorname{d}\! y \operatorname{d}\! t\nonumber\\
 &\leq&  C \left(\rho_1^{-2}\vee\rho_2^{-2}\right)\int_{I_{r + \rho_1}^{\ominus}(t_0)} \int_{B_{r+\rho_2}(x_0)} w^2_{+}(t,x) \operatorname{d}\! x \operatorname{d}\! t \nonumber\\
&&+ C\left(\frac{r+\rho_2}{\rho_2}\right)^d \rho_2^{-2} \int_{I_{r+\rho_1}^{\ominus}(t_0)} \left(\int_{B_{r+\rho_2}(x_0)} w_{+}(t,x) \operatorname{d}\!x \right) \operatorname{Tail}(w_{+}(t);x_0,r+\rho_2)\operatorname{d}\!t
\end{eqnarray}
and
\begin{eqnarray}\label{eq:Cacc2}
 && \sup_{t\in I_{r+\rho_1}^{\ominus}(t_0)}\int_{B_r(x_0)} w^2_{+}(t,x) \operatorname{d}\! x +\int_{I_{r+\rho_1}^{\ominus}(t_0)}\int_{B_r(x_0)} |Dw_{+}(t,x)|^2\operatorname{d}\! x\operatorname{d}\! t \nonumber\\
 &&+\int_{I_{r+\rho_1}^{\ominus}(t_0)}\int_{B_r(x_0)} \int_{B_r(x_0)}\frac{|w_{+}(t,x)-w_{+}(t,y)|^2}{|x-y|^{d+2s}}\operatorname{d}\! x \operatorname{d}\! y\operatorname{d}\!t \nonumber\\
 &&+ r^{-d-2s} \int_{I_{r+\rho_1}^{\ominus}(t_0)} \int_{B_r(x_0)} \int_{B_r(x_0)} w_{+}(t,x) w_{-}(t,y) \operatorname{d}\! x \operatorname{d}\! y \operatorname{d}\! t\nonumber\\
 &\leq&  \int_{B_{r+\rho_2}(x_0)} w^2_{+}(t_0-(r+\rho_1)^2,x) \operatorname{d}\! x+ C \rho_2^{-2}\int_{I_{r + \rho_1}^{\ominus}(t_0)} \int_{B_{r+\rho_2}(x_0)} w^2_{+}(t,x) \operatorname{d}\! x \operatorname{d}\! t \nonumber\\
&&+ C\left(\frac{r+\rho_2}{\rho_2}\right)^d \rho_2^{-2} \int_{I_{r+\rho_1}^{\ominus}(t_0)} \left(\int_{B_{r+\rho_2}(x_0)} w_{+}(t,x) \operatorname{d}\!x \right) \operatorname{Tail}(w_{+}(t);x_0,r+\rho_2)\operatorname{d}\!t
\end{eqnarray}
for any $0 < \rho_1\leq r \leq r+\rho_1 \leq R$ and $0 < \rho_2 \leq r \leq r+\rho_2 \leq R$ satisfying $I_{r+\rho_1}^{\ominus}(t_0) \times B_{r+\rho_2}(x_0) \subset Q_{2R}(t_0,x_0)$, where $w = v-l$ with a level $l \in \R$, the lower truncation $w_+:=\max\{v-l,0\}$, the upper truncation $w_-:=-\min\{v-l,0\}$, and $\rho_1^{-2}\vee\rho_2^{-2}:=\max\{\rho_1^{-2},\rho_2^{-2}\}$. Moreover, the Caccioppoli-type inequalities \eqref{eq:Cacc} and \eqref{eq:Cacc2} still hold for $w_-$ when $v$ is a local weak supersolution to \eqref{eq:PDE-hom} in $\Omega_T$.
\end{lemma}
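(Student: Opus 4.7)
The two estimates differ only in how the time cutoff is chosen, so I will treat them in parallel. The guiding idea is the standard Caccioppoli scheme: plug a suitable nonnegative test function into the weak subsolution inequality, use the ellipticity \eqref{ellipticity} and growth \eqref{growth1} on the local term, and carefully split the nonlocal double integral into near and far contributions.

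The plan is to choose a test function of the form $\varphi(t,x) = w_+(t,x)\,\eta(x)^2\,\chi(t)^2$, where $\eta\in C_c^\infty(B_{r+\rho_2}(x_0))$ satisfies $\eta\equiv 1$ on $B_r(x_0)$, $0\le\eta\le 1$, and $|D\eta|\lesssim \rho_2^{-1}$, while $\chi\in W^{1,\infty}(\mathbb{R})$ is supported in $(-\infty,t_0]$ with $\chi\equiv 1$ on $I_r^\ominus(t_0)$ and $|\chi'|\lesssim \rho_1^{-2}$. For \eqref{eq:Cacc} I would additionally impose $\chi(t_0-(r+\rho_1)^2)=0$ so that no initial datum appears; for \eqref{eq:Cacc2} I would take $\chi\equiv 1$ on $I_{r+\rho_1}^\ominus(t_0)$, which produces the surface term $\int_{B_{r+\rho_2}}w_+^2(t_0-(r+\rho_1)^2,\cdot)\,dx$ on the right-hand side. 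Since $w_+\ge 0$, the test function is admissible in the subsolution inequality, and I would justify the insertion of $w_+$ itself (not of class $L^2_t H^1_x$ in time derivative) via a standard Steklov-averaging regularization $[v]_h$, followed by passing to the limit $h\to 0$.

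For the time term I would use the chain rule on $[v]_h$ to write $\partial_t[v]_h\cdot [v]_h\eta^2\chi^2 = \tfrac{1}{2}\partial_t([v]_h^2)\eta^2\chi^2$; after integration in time this yields $\tfrac{1}{2}\sup_t\int w_+^2\eta^2\,dx$ plus a term absorbed into $\rho_1^{-2}\int\!\!\int w_+^2$. For the local divergence term, after computing $D\varphi = Dw_+\,\eta^2\chi^2 + 2w_+\,\eta D\eta\,\chi^2$ and invoking \eqref{ellipticity}, the main piece $\mathcal{A}(t,x,Dv)\cdot Dw_+ \ge \Lambda^{-1}|Dw_+|^2$ on $\{v>l\}$ produces the gradient term on the left, while \eqref{growth1} combined with Young's inequality on the mixed term $\mathcal{A}\cdot(\eta D\eta)w_+\chi^2$ contributes a $\rho_2^{-2}\int\!\!\int w_+^2$ summand.

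The nonlocal part is the real work. Using the algebraic identity and the symmetrization already justified in the paper, I would split $\mathbb{R}^d\times\mathbb{R}^d = (B_{r+\rho_2}\times B_{r+\rho_2})\cup\{(x,y):x\in B_{r+\rho_2},\,y\notin B_{r+\rho_2}\}\cup\{\text{symmetric}\}$. On the near block one uses $(v(x)-v(y))(w_+(x)\eta(x)^2-w_+(y)\eta(y)^2) \ge \tfrac{1}{2}|w_+(x)-w_+(y)|^2 \eta(x)^2 \wedge \eta(y)^2 + 2 w_+(x)w_-(y)\eta(x)^2$ up to an absorbable error of the form $C\,(w_+(x)+w_+(y))^2\,|\eta(x)-\eta(y)|^2$; bounding $|\eta(x)-\eta(y)|\le\rho_2^{-1}|x-y|$ and integrating against $|x-y|^{-d-2s}$ produces the Gagliardo seminorm and the $w_+\,w_-$ cross term on the left together with another $\rho_2^{-2}\int\!\!\int w_+^2$ on the right. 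For the far block, I would exploit $\eta(y)=0$ for $y\notin B_{r+\rho_2}$, bound $v(y)w_+(x)\eta(x)^2$ by $|v(y)|w_+(x)$, and use the geometric inequality $|x-y|^{-d-2s}\le C\left(\frac{r+\rho_2}{\rho_2}\right)^{d+2s}|x_0-y|^{-d-2s}$ valid for $x\in\mathrm{supp}\,\eta$ and $y\notin B_{r+\rho_2}$. After integration this yields exactly the tail contribution written in the statement (here one has to be careful to absorb the $l$-dependence using the fact that $l$ added to $v(y)$ in the truncation $w_+$ pairs with a harmless integral).

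The last step is to collect all terms, absorb the two $|Dw_+|^2$-terms coming from Young and keep the sup-term under the $\tfrac{1}{2}$ thanks to the chain-rule identity. The hardest point I anticipate is bookkeeping in the nonlocal decomposition, specifically the production of the $w_+(t,x)w_-(t,y)$ coercive cross term rather than merely the Gagliardo seminorm of $w_+$. This term appears because, when $v(x)>l$ and $v(y)<l$, the product $(v(x)-v(y))(w_+(x)-w_+(y)) = (v(x)-v(y))w_+(x)$ strictly dominates $|w_+(x)-w_+(y)|^2 = w_+(x)^2$; the extra slack is exactly $w_+(x)w_-(y)$, and extracting it cleanly (with the explicit $r^{-d-2s}$ prefactor, using $|x-y|\le 2r$ on $B_r\times B_r$) is the delicate algebraic step. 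Once this is in place, the sup in time in \eqref{eq:Cacc} is handled by taking the essential supremum over the admissible upper endpoint of integration in $\chi$, and the supersolution case follows by symmetry with $\varphi = -w_-\eta^2\chi^2$.
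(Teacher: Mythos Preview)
Your plan is correct and matches the paper's proof essentially step by step: the same test function $\varphi=w_+\,\phi^2\,\eta^2$ (with your $\eta,\chi$ corresponding to the paper's $\phi,\eta$), Steklov regularization for the time term, ellipticity/growth plus Young for the local part, and the same near/far decomposition of the nonlocal bilinear form, with the $w_+(x)w_-(y)$ cross term extracted exactly as you describe. One small slip to fix in execution: in the far block the bound should be $(v(y)-v(x))_+\le w_+(y)$ (valid because $w_+(x)>0$ forces $v(x)>l$), not $|v(y)|$; this is what produces $\operatorname{Tail}(w_+;\cdot)$ on the right and resolves the ``$l$-dependence'' you flagged.
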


\begin{proof}
The proof proceeds by testing the weak subsolution formulation with the function $\varphi(t,x) = \eta^2(t)\phi^2(x)w_+(t,x)$, where the smooth function $\eta \in C^{\infty}([t_0-(r+\rho_1)^2,t_0] ;[0,1])$ with $\eta \equiv 1$ in $[t_0-r^2,t_0]$, $\eta(t_0-(r+\rho_1)^2) = 0$ and $|\eta'| \leq C \left((r+\rho_1)^2-r^2\right)^{-1}$, as well as the cutoff function $\phi \in C_0^{\infty}(B_{r+\frac{\rho_2}{2}}(x_0);[0,1])$ with $\phi \equiv 1$ in $B_{r}(x_0)$ and $|D \phi| \leq C \rho_2^{-1}$.
Note that the test function contains $v$ which lacks the required regularity, so the first step should be to regularize the time variable. In fact, it
can be carried out by directly adapting the Steklov averages technique in a standard procedure, we omit this step here.

On the basis that there is the following weak subsolution formulation
\begin{eqnarray}\label{weaksol1}
		 0&\geq&\frac{1}{2}\int_{B_{r+\rho_2}(x_0)}w_+^2(\tau,x)\phi^2(x) \operatorname{d}\!x-\int_{t_0-(r+\rho_1)^2}^{\tau} \int_{B_{r+\rho_2}(x_0)} w_+^2(t,x)\phi^2(x)\eta(t)\eta'(t)  \operatorname{d}\! x \operatorname{d}\!t \nonumber\\
&&+\int_{t_0-(r+\rho_1)^2}^{\tau} \int_{B_{r+\rho_2}(x_0)}\mathcal{A}(t,x,Dv)\cdot D\left(\eta^2(t)\phi^2(x)w_+(t,x)\right)\operatorname{d}\! x \operatorname{d}\!t \\
&&+ \int_{t_0-(r+\rho_1)^2}^{\tau} \int_{\mathbb{R}^d} \int_{\mathbb{R}^d} (v(t,x)-v(t,y))(w_+(t,x)\phi^2(x)-w_+(t,y)\phi^2(y))\eta^2(t) K(t,x,y) \operatorname{d}\!x \operatorname{d}\!y \operatorname{d}\! t\nonumber
\end{eqnarray}
for any $\tau\in[t_0-r^2,t_0]$, we estimate the integral terms in \eqref{weaksol1} separately. The choice of $\eta$ and $\phi$ implies that
\begin{equation}\label{est1}
  \frac{1}{2}\int_{B_{r+\rho_2}(x_0)}w_+^2(\tau,x)\phi^2(x) \operatorname{d}\!x\geq\frac{1}{2}\int_{B_{r}(x_0)}w_+^2(\tau,x) \operatorname{d}\!x,
\end{equation}
and
\begin{equation}\label{est2}
  \int_{t_0-(r+\rho_1)^2}^{\tau} \int_{B_{r+\rho_2}(x_0)} w_+^2(t,x)\phi^2(x)\eta(t)\eta'(t)  \operatorname{d}\! x \operatorname{d}\!t\leq C\rho_1^{-2}\int_{I_{r+\rho_1}^\ominus(t_0)} \int_{B_{r+\rho_2}(x_0)} w_+^2(t,x) \operatorname{d}\! x \operatorname{d}\!t\,.
\end{equation}
The growth condition \eqref{growth1} implies that $\mathcal{A}(t,x,{\bf{0}})=0$,
which is then combined with the ellipticity condition \eqref{ellipticity} to yield that
\begin{eqnarray*}
  \mathcal{A}(t,x,Dv)\cdot Dw_+(t,x)  &=& \left(\mathcal{A}(t,x,Dv)-\mathcal{A}(t,x,{\bf{0}})\right)\cdot Dw_+(t,x) \\
   &=&  \left(\mathcal{A}(t,x,Dw_+)-\mathcal{A}(t,x,{\bf{0}})\right)\cdot Dw_+(t,x) \geq \Lambda^{-1}|Dw_+(t,x)|^2.
\end{eqnarray*}
Substituting the above inequality into the third term on the right side of \eqref{weaksol1} and applying the growth condition \eqref{growth1} together with Young's inequality, we derive
\begin{eqnarray}\label{est3}
   &&\int_{t_0-(r+\rho_1)^2}^{\tau} \int_{B_{r+\rho_2}(x_0)}\mathcal{A}(t,x,Dv)\cdot \left(\phi^2(x)Dw_+(t,x)+2w_+(t,x)\phi(x)D\phi(x)\right)\eta^2(t)\operatorname{d}\! x\operatorname{d}\! t\nonumber\\
   &\geq&\Lambda^{-1} \int_{t_0-(r+\rho_1)^2}^{\tau} \int_{B_{r+\rho_2}(x_0)}|Dw_+(t,x)|^2\phi^2(x)\eta^2(t)\operatorname{d}\! x\operatorname{d}\! t\nonumber\\
   &&-2\Lambda \int_{t_0-(r+\rho_1)^2}^{\tau} \int_{B_{r+\rho_2}(x_0)}|Dw_+(t,x)|w_+(t,x)\phi(x)|D\phi(x)|\eta^2(t)\operatorname{d}\! x\operatorname{d}\! t\nonumber\\
    &\geq&\frac{\Lambda^{-1}}{2} \int_{t_0-r^2}^{\tau} \int_{B_{r}(x_0)}|Dw_+(t,x)|^2\operatorname{d}\! x\operatorname{d}\! t- C(\Lambda)\rho_2^{-2}\int_{I_{r+\rho_1}^\ominus(t_0)} \int_{B_{r+\rho_2}(x_0)} w_+^2(t,x) \operatorname{d}\! x \operatorname{d}\!t\,.
\end{eqnarray}
Subsequently, using the symmetry of the kernel $K$, we split the last term on the right side of \eqref{weaksol1} by dividing the integral region as follows
\begin{eqnarray}\label{laste}
  &&\int_{t_0-(r+\rho_1)^2}^{\tau}\int_{\mathbb{R}^d} \int_{\mathbb{R}^d} (v(t,x)-v(t,y))\left(w_+(t,x)\phi^2(x)-w_+(t,y)\phi^2(y)\right)\eta^2(t) K(t,x,y) \operatorname{d}\!x \operatorname{d}\!y \operatorname{d}\! t\nonumber\\
   &=& \int_{t_0-(r+\rho_1)^2}^{\tau}\int_{B_{r+\rho_2}(x_0)} \int_{B_{r+\rho_2}(x_0)} (v(t,x)-v(t,y))\left(w_+(t,x)\phi^2(x)-w_+(t,y)\phi^2(y)\right)\eta^2(t) K(t,x,y) \operatorname{d}\!x \operatorname{d}\!y \operatorname{d}\! t  \nonumber\\
   &&+2\int_{t_0-(r+\rho_1)^2}^{\tau} \int_{B_{r+\rho_2}(x_0)}\int_{\mathbb{R}^d\setminus B_{r+\rho_2}(x_0)} (v(t,y)-v(t,x))w_+(t,y)\phi^2(y)\eta^2(t) K(t,x,y) \operatorname{d}\!x \operatorname{d}\!y \operatorname{d}\! t\nonumber\\
   &:=&I+II.
\end{eqnarray}
For any fixed $t\in (t_0-(r+\rho_1)^2,\tau)$, we denote the superlevel set
$$A_l:=\{u(\cdot,t)>l\}\cap B_{r+\rho_2}\,.$$
In the first case, if $x\in A_l$ and $y\in B_{r+\rho_2}\setminus A_l$, then
\begin{eqnarray*}
   && (v(t,x)-v(t,y))\left(w_+(t,x)\phi^2(x)-w_+(t,y)\phi^2(y)\right) \\
   &=& (w_+(t,x)+w_-(t,y))w_+(t,x)\phi^2(x)
   \geq w_+(t,x)w_-(t,y)\phi^2(x).
\end{eqnarray*}
In the second case, if $x,\,y\in A_l$, then
\begin{eqnarray*}
   &&(v(t,x)-v(t,y))\left(w_+(t,x)\phi^2(x)-w_+(t,y)\phi^2(y)\right) \\
   &=&\left(w_+(t,x)\phi(x)-w_+(t,y)\phi(y)\right)^2-
   w_+(t,x)w_+(t,y)(\phi(x)-\phi(y))^2.
\end{eqnarray*}
Otherwise, if $x,\,y\in B_{r+\rho_2}\setminus A_l$, then
$$(v(t,x)-v(t,y))\left(w_+(t,x)\phi^2(x)-w_+(t,y)\phi^2(y)\right)=0.$$
Taking into account the aforementioned estimates and combining the ellipticity condition \eqref{ellip} with the selection of $\phi$ and $r\leq1$, we estimate $I$ in the following manner
\begin{eqnarray*}
  I &\geq&\Lambda^{-1}\int_{t_0-(r+\rho_1)^2}^{\tau} \int_{B_{r+\rho_2}(x_0)} \int_{B_{r+\rho_2}(x_0)} w_+(t,x)w_-(t,y)\phi^2(x)\eta^2(t) |x - y|^{-d-2s} \operatorname{d}\!x \operatorname{d}\!y \operatorname{d}\! t\\   && +\Lambda^{-1}\int_{t_0-(r+\rho_1)^2}^{\tau}\int_{B_{r+\rho_2}(x_0)} \int_{B_{r+\rho_2}(x_0)} \left(w_+(t,x)\phi(x)-w_+(t,y)\phi(y)\right)^2\eta^2(t) |x - y|^{-d-2s} \operatorname{d}\!x \operatorname{d}\!y \operatorname{d}\! t\\
   && -\Lambda\int_{t_0-(r+\rho_1)^2}^{\tau}\int_{B_{r+\rho_2}(x_0)} \int_{B_{r+\rho_2}(x_0)} \left(w^2_+(t,x)+w^2_+(t,y)\right)|D\phi|^2\eta^2(t)|x - y|^{-d-2s+2} \operatorname{d}\!x \operatorname{d}\!y \operatorname{d}\! t\\
    &\geq& C(\Lambda,d,s)r^{-d-2s} \int_{t_0-r^2}^{\tau} \int_{B_r(x_0)} \int_{B_r(x_0)} w_{+}(t,x) w_{-}(t,y) \operatorname{d}\! x \operatorname{d}\! y \operatorname{d}\! t\\
     &&+\Lambda^{-1}\int_{t_0-r^2}^{\tau} \int_{B_{r}(x_0)} \int_{B_{r}(x_0)} \frac{\left|w_+(t,x)-w_+(t,y)\right|^2}{|x - y|^{d+2s}} \operatorname{d}\!x \operatorname{d}\!y\operatorname{d}\! t\\
    && -C(\Lambda,d,s)\rho_2^{-2}\int_{I_{r+\rho_1}^{\ominus}(t_0)} \int_{B_{r+\rho_2}(x_0)} w^2_+(t,x)\operatorname{d}\!x \operatorname{d}\! t\,.
\end{eqnarray*}
Regarding the estimate of $II$, we once again utilize the ellipticity condition \eqref{ellip} to derive
\begin{eqnarray*}
  -II &\leq& 2\Lambda\int_{I_{r+\rho_1}^{\ominus}(t_0)} \int_{B_{r+\rho_2}(x_0)} \int_{\mathbb{R}^d\setminus B_{r+\rho_2}(x_0)} (v(t,x)-v(t,y))_+w_+(t,y)\phi^2(y) |x - y|^{-d-2s}  \operatorname{d}\!x \operatorname{d}\!y \operatorname{d}\! t\\
  &\leq& 2\Lambda\int_{I_{r+\rho_1}^{\ominus}(t_0)} \int_{B_{r+\frac{\rho_2}{2}}(x_0)}w_+(t,y) \left(\int_{\mathbb{R}^d\setminus B_{r+\rho_2}(x_0)} w_+(t,x)|y - x|^{-d-2s}  \operatorname{d}\!x\right) \operatorname{d}\!y \operatorname{d}\! t\\
&\leq& C(\Lambda,d,s)(\frac{r+\rho_2}{\rho_2})^d\rho_2^{-2}\int_{I_{r+\rho_1}^{\ominus}(t_0)} \left(\int_{B_{r+\rho_2}(x_0)}w_+(t,x) \operatorname{d}\!x\right) \operatorname{Tail}(w_+(t);x_0,r+\rho_2) \operatorname{d}\! t\,.
\end{eqnarray*}
Substituting the estimates of $I$ and $II$ into \eqref{laste}, we derive
\begin{eqnarray}\label{est4}
   && \int_{t_0-(r+\rho_1)^2}^{\tau}\int_{\mathbb{R}^d} \int_{\mathbb{R}^d} (v(t,x)-v(t,y))\left(w_+(t,x)\phi^2(x)-w_+(t,y)\phi^2(y)\right)\eta^2(t) K(t,x,y) \operatorname{d}\!x \operatorname{d}\!y \operatorname{d}\! t\nonumber\\
    &\geq& C(\Lambda,d,s)r^{-d-2s} \int_{t_0-r^2}^{\tau} \int_{B_r(x_0)} \int_{B_r(x_0)} w_{+}(t,x) w_{-}(t,y) \operatorname{d}\! x \operatorname{d}\! y \operatorname{d}\! t\nonumber\\
     &&+\Lambda^{-1}\int_{t_0-r^2}^{\tau} \int_{B_{r}(x_0)} \int_{B_{r}(x_0)} \frac{\left|w_+(t,x)-w_+(t,y)\right|^2}{|x - y|^{d+2s}} \operatorname{d}\!x \operatorname{d}\!y\operatorname{d}\! t\nonumber\\
    && -C(\Lambda,d,s)\rho_2^{-2}\int_{I_{r+\rho_1}^{\ominus}(t_0)} \int_{B_{r+\rho_2}(x_0)} w^2_+(t,x)\operatorname{d}\!x \operatorname{d}\! t  \nonumber\\
   &&-C(\Lambda,d,s)(\frac{r+\rho_2}{\rho_2})^d\rho_2^{-2}\int_{I_{r+\rho_1}^{\ominus}(t_0)} \left(\int_{B_{r+\rho_2}(x_0)}w_+(t,x) \operatorname{d}\!x\right) \operatorname{Tail}(w_+(t);x_0,r+\rho_2) \operatorname{d}\! t\,.
\end{eqnarray}
Finally, combining \eqref{weaksol1}-\eqref{est3} with \eqref{est4}, we deduce that
\begin{eqnarray*}
&& \int_{B_r(x_0)} w^2_{+}(\tau,x) \operatorname{d}\! x +\int_{t_0-r^2}^{\tau}\int_{B_r(x_0)} |Dw_{+}(t,x)|^2\operatorname{d}\! x\operatorname{d}\! t \nonumber\\
 &&+\int_{t_0-r^2}^{\tau}\int_{B_r(x_0)} \int_{B_r(x_0)}\frac{|w_{+}(t,x)-w_{+}(t,y)|^2}{|x-y|^{d+2s}}\operatorname{d}\! x \operatorname{d}\! y\operatorname{d}\!t \nonumber\\
 &&+ r^{-d-2s} \int_{t_0-r^2}^{\tau} \int_{B_r(x_0)} \int_{B_r(x_0)} w_{+}(t,x) w_{-}(t,y) \operatorname{d}\! x \operatorname{d}\! y \operatorname{d}\! t\nonumber\\
 &\leq&  C \left(\rho_1^{-2}\vee\rho_2^{-2}\right)\int_{I_{r + \rho_1}^{\ominus}(t_0)} \int_{B_{r+\rho_2}(x_0)} w^2_{+}(t,x) \operatorname{d}\! x \operatorname{d}\! t \nonumber\\
&&+ C\left(\frac{r+\rho_2}{\rho_2}\right)^d \rho_2^{-2} \int_{I_{r+\rho_1}^{\ominus}(t_0)} \left(\int_{B_{r+\rho_2}(x_0)} w_{+}(t,x) \operatorname{d}\!x \right) \operatorname{Tail}(w_{+}(t);x_0,r+\rho_2)\operatorname{d}\!t
\end{eqnarray*}
holds for any $\tau\in[t_0-r^2,t_0]$, where the positive constant $C$ depends only on $d,\,s$ and $\Lambda$. Hence, we can conclude that the Caccioppoli inequality \eqref{eq:Cacc} is valid. The proof of \eqref{eq:Cacc2} is treated in  exactly the same way as above, the only difference being that $\eta(t)\equiv1$ is chosen at this point. Thus, we complete the proof of Lemma \ref{lemma:Cacc}\,.
\end{proof}

\section{H\"{o}lder estimates for homogeneous mixed problems}\label{section3}

This section is devoted to the proof of Theorem \ref{prop:Holderq} on the local H\"{o}lder estimate with an optimal tail term for weak solutions of the homogeneous mixed local and nonlocal parabolic equation \eqref{eq:PDE-hom}.

\subsection{Local boundedness with $L^1$-Tail}

In contrast to the existing local boundedness results for mixed parabolic problems, which rely on the $L^\infty$-Tail, the aim of this subsection is to establish an improved local boundedness estimate with $L^1$-Tail for the weak solution of \eqref{eq:PDE-hom}. For this purpose, we first derive the following lemma, which as a basic element modifies the $L^\infty$-Tail in the local boundedness estimate to the $L^1$-Tail.

\begin{lemma}\label{lemma:imptail}
Let $v$ be a local weak subsolution (supersolution) to \eqref{eq:PDE-hom} in $\Omega_T$,
then for any $R\in(0,1]$ and $(t_0,x_0)\in I\times\Omega$ with $I^\ominus_{2R}(t_0)\times B_{2R}(x_0)\Subset I\times\Omega$, there exists a positive constant $C$ depending only on $d,\,s,\,\Lambda$ such that
\begin{eqnarray}\label{imptail}
 \sup_{t\in I_{\frac{5R}{6}}^{\ominus}(t_0)}\left(\fint_{B_{\frac{5R}{6}}(x_0)} v^2_{\pm}(t,x) \operatorname{d}\! x \right)^{\frac{1}{2}}&\leq& C \left(\fint_{I_{R}^{\ominus}(t_0)} \fint_{B_{R}(x_0)} v^2_{\pm}(t,x) \operatorname{d}\! x \operatorname{d}\! t\right)^\frac{1}{2} \nonumber\\
&&+ C \fint_{I_{R}^{\ominus}(t_0)}
\operatorname{Tail}(v_{\pm}(t);x_0,R)\operatorname{d}\!t\,.
\end{eqnarray}
\end{lemma}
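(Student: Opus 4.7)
The plan is to apply the energy Caccioppoli inequality \eqref{eq:Cacc} at level $l = 0$ on two concentric space-time cylinders, couple the tail term with the iteration Lemma \ref{iterate3}, and absorb intermediate quantities via Young's inequality. Since the argument for $v_-$ in the supersolution case is completely analogous, I focus on $v_+$. For any $\tfrac{5R}{6} \le r_1 < r_2 \le R$, choosing $\rho_1 = \rho_2 = r_2 - r_1$ in \eqref{eq:Cacc} and discarding the nonnegative gradient and nonlocal energy contributions on the left yields
\[
\Phi(r_1) \le \frac{C}{(r_2 - r_1)^2} \int_{Q_{r_2}(t_0,x_0)} v_+^2 \operatorname{d}\!x \operatorname{d}\!t + \frac{C R^d}{(r_2 - r_1)^{d+2}} \int_{I_{r_2}^\ominus(t_0)} \left( \int_{B_{r_2}(x_0)} v_+(t,x) \operatorname{d}\!x \right) \operatorname{Tail}(v_+(t); x_0, r_2) \operatorname{d}\!t,
\]
where $\Phi(r) := \sup_{\tau \in I_r^\ominus(t_0)} \int_{B_r(x_0)} v_+^2(\tau, x) \operatorname{d}\!x$.

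The next step is to reduce the inner tail $\operatorname{Tail}(\cdot; x_0, r_2)$ to the outer one $\operatorname{Tail}(\cdot; x_0, R)$. Splitting the tail's integration domain into $B_R \setminus B_{r_2}$ and $\mathbb{R}^d \setminus B_R$ and using $r_2 \ge \tfrac{5R}{6}$ gives $\operatorname{Tail}(v_+(t); x_0, r_2) \le C R^{2-d-2s}\int_{B_R} v_+(t,y)\operatorname{d}\!y + \operatorname{Tail}(v_+(t); x_0, R)$. Cauchy--Schwarz in space bounds $\int_{B_{r_2}} v_+(t,\cdot)\operatorname{d}\!x$ by $C R^{d/2}\Phi(r_2)^{1/2}$, and the remaining spatial $L^1$ contribution on $Q_R$ is controlled in space-time by $C R^{(d+2)/2}(\int_{Q_R} v_+^2)^{1/2}$. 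Two applications of Young's inequality $ab \le \epsilon a^2 + b^2/(4\epsilon)$ with $\epsilon$ chosen small absorb the resulting $\Phi(r_2)^{1/2}$ factors into $\tfrac{1}{2}\Phi(r_2)$, producing
\[
\Phi(r_1) \le \frac{1}{2}\Phi(r_2) + \frac{C R^{\alpha_1}}{(r_2 - r_1)^{\beta_1}}\int_{Q_R(t_0,x_0)} v_+^2 \operatorname{d}\!x \operatorname{d}\!t + \frac{C R^{\alpha_2}}{(r_2 - r_1)^{\beta_2}}\left(\int_{I_R^\ominus(t_0)} \operatorname{Tail}(v_+(t); x_0, R) \operatorname{d}\!t\right)^2
\]
for suitable exponents with $\alpha_1 - \beta_1 \le -2$ and $\alpha_2 - \beta_2 = d - 4$ after bookkeeping.

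Invoking Lemma \ref{iterate3} with $\vartheta = 1/2$ on $[\tfrac{5R}{6}, R]$ then yields
\[
\Phi\!\left(\tfrac{5R}{6}\right) \le C R^d \fint_{Q_R(t_0,x_0)} v_+^2 \operatorname{d}\!x \operatorname{d}\!t + C R^{d-4} \left( \int_{I_R^\ominus(t_0)} \operatorname{Tail}(v_+(t); x_0, R) \operatorname{d}\!t \right)^2.
\]
Dividing by $|B_{5R/6}| \sim R^d$, observing that $R^{-4}\bigl(\int_{I_R^\ominus}\operatorname{Tail}\operatorname{d}\!t\bigr)^2 = \bigl(\fint_{I_R^\ominus}\operatorname{Tail}\operatorname{d}\!t\bigr)^2$, and taking square roots produces the claimed bound \eqref{imptail}.

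The main technical point to be careful about is keeping the tail coupled linearly throughout the iteration, so the final estimate retains an $L^1$-in-time tail norm rather than being forced into an $L^2$-in-time form. This is possible precisely because the Caccioppoli inequality \eqref{eq:Cacc} pairs the spatial $L^1$ quantity $\int_{B_{r_2}} v_+$ with $\operatorname{Tail}(v_+(t))$ under a single $\operatorname{d}\!t$ integral; Cauchy--Schwarz in the spatial variable alone converts $\int v_+$ to $\Phi^{1/2}$ while leaving $\operatorname{Tail}(v_+(t))$ intact under $\operatorname{d}\!t$, and Young's then absorbs $\Phi^{1/2}$ without squaring the tail factor in time.
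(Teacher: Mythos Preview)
Your proposal is correct and follows essentially the same approach as the paper: apply the Caccioppoli inequality \eqref{eq:Cacc} at level $l=0$, use Young's inequality to produce a term $\tfrac12\Phi(r_2)$ that can be reabsorbed via Lemma \ref{iterate3}, and verify that the squared tail quantity after iteration collapses to $\bigl(\fint_{I_R^\ominus}\operatorname{Tail}\,\operatorname{d}\!t\bigr)^2$. The only cosmetic differences are that the paper works with the averaged quantity $f(r)=\sup_t\fint_{B_r}v_+^2$ rather than your $\Phi(r)=\sup_t\int_{B_r}v_+^2$, pulls out $\sup_t\fint_{B_\rho}v_+$ directly (then uses Jensen to pass to $f(\rho)$) instead of your Cauchy--Schwarz step, and postpones the splitting of the tail domain into $B_R\setminus B_{5R/6}$ and $\mathbb{R}^d\setminus B_R$ until after the iteration rather than before; none of these affect the argument.
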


\begin{proof}
Here we only prove the desired estimate for the subsolution, which can be demonstrated in a similar way for the supersolution.
Let $\frac{5R}{6}\leq r<\rho\leq R$, $\rho_1=\frac{\rho-r}{2}$ and $\rho_2=\rho-r$, then by virtue of the Caccioppoli inequality \eqref{eq:Cacc}, we obtain
\begin{eqnarray*}
\sup_{t\in I_{r}^{\ominus}(t_0)}\fint_{B_r(x_0)} v^2_{+}(t,x) \operatorname{d}\! x&\leq& C(\rho-r)^{-2}\int_{I_{R}^{\ominus}(t_0)}\fint_{B_R(x_0)}v^2_{+}(t,x) \operatorname{d}\! x \operatorname{d}\! t\\
&&+C(\frac{\rho-r}{R})^{-d-2}\int_{I_{\rho}^{\ominus}(t_0)}
\left(\fint_{B_\rho(x_0)}v_{+}(t,x) \operatorname{d}\! x\right)\int_{\R^d\setminus B_\rho(x_0)}\frac{v_{+}(t,y)}{|y-x_0|^{d+2s}} \operatorname{d}\! y \operatorname{d}\! t,
\end{eqnarray*}
where the positive constant $C=C(d,s,\Lambda)$.
Applying Young's inequality and H\"{o}lder's inequality, the second term on the right side of the above estimate can be controlled by
\begin{eqnarray*}
   && C(\frac{\rho-r}{R})^{-d-2}\sup_{t\in I_{\rho}^{\ominus}(t_0)}\left(\fint_{B_\rho(x_0)}v_{+}(t,x) \operatorname{d}\! x\right) \int_{I_{\rho}^{\ominus}(t_0)}
\int_{\R^d\setminus B_\rho(x_0)}\frac{v_{+}(t,x)}{|x-x_0|^{d+2s}} \operatorname{d}\! x \operatorname{d}\! t \\
   &\leq& \frac{1}{2}\sup_{t\in I_{\rho}^{\ominus}(t_0)}\left(\fint_{B_\rho(x_0)}v_{+}(t,x) \operatorname{d}\! x\right)^2+C\left[(\frac{\rho-r}{R})^{-d-2}\int_{I_{\rho}^{\ominus}(t_0)}
\int_{\R^d\setminus B_\rho(x_0)}\frac{v_{+}(t,x)}{|x-x_0|^{d+2s}} \operatorname{d}\! x \operatorname{d}\! t\right]^2  \\
   &\leq&  \frac{1}{2}\sup_{t\in I_{\rho}^{\ominus}(t_0)}\fint_{B_\rho(x_0)}v^2_{+}(t,x) \operatorname{d}\! x+C(\frac{\rho-r}{R})^{-2(d+2)}\left(\int_{I_{R}^{\ominus}(t_0)}
\int_{\R^d\setminus B_{\frac{5R}{6}}(x_0)}\frac{v_{+}(t,x)}{|x-x_0|^{d+2s}} \operatorname{d}\! x \operatorname{d}\! t\right)^2.
\end{eqnarray*}
We denote
\begin{eqnarray*}
  c_1 &:=& C \int_{I_{R}^{\ominus}(t_0)}\fint_{B_R(x_0)}v^2_{+}(t,x) \operatorname{d}\! x \operatorname{d}\! t, \\
  c_2 &:=& CR^{2(d+2)}\left(\int_{I_{R}^{\ominus}(t_0)}
\int_{\R^d\setminus B_{\frac{5R}{6}}(x_0)}\frac{v_{+}(t,x)}{|x-x_0|^{d+2s}} \operatorname{d}\! x \operatorname{d}\! t\right)^2, \\
  f(r) &:=&  \sup_{t\in I_{r}^{\ominus}(t_0)}\fint_{B_r(x_0)} v^2_{+}(t,x) \operatorname{d}\! x,
\end{eqnarray*}
then it follows that
\begin{equation*}
  f(r)\leq \frac{1}{2}f(\rho)+\frac{c_1}{(\rho-r)^{2}}+\frac{c_2}{(\rho-r)^{2(d+2)}}
\end{equation*}
for any  $\frac{5R}{6}\leq r<\rho\leq R$. Note that the boundedness of $f$ is guaranteed by $v\in L_{\rm loc}^\infty(I;L^2_{\rm loc}(\Omega))$.
Then a classical iteration lemma, given by Lemma \ref{iterate3}\,, leads to
\begin{equation*}
  f(\frac{5R}{6})\leq C(d)\left(\frac{c_1}{R^{2}}+\frac{c_2}{R^{2(d+2)}}\right).
\end{equation*}
Finally, combining with the division of the integral domain and H\"{o}lder's inequality, we deduce that
\begin{eqnarray*}
&&\sup_{t\in I_{\frac{5R}{6}}^{\ominus}(t_0)}\left(\fint_{B_{\frac{5R}{6}}(x_0)} v^2_{+}(t,x) \operatorname{d}\! x \right)^{\frac{1}{2}}\\
&\leq& C \left(\fint_{I_{R}^{\ominus}(t_0)} \fint_{B_{R}(x_0)} v^2_{+}(t,x) \operatorname{d}\! x \operatorname{d}\! t\right)^\frac{1}{2} +C\int_{I_{R}^{\ominus}(t_0)}
\int_{\R^d\setminus B_{\frac{5R}{6}}(x_0)}\frac{v_{+}(t,x)}{|x-x_0|^{d+2s}} \operatorname{d}\! x \operatorname{d}\! t\\
&\leq&C \left(\fint_{I_{R}^{\ominus}(t_0)} \fint_{B_{R}(x_0)} v^2_{+}(t,x) \operatorname{d}\! x \operatorname{d}\! t\right)^\frac{1}{2} + C \fint_{I_{R}^{\ominus}(t_0)}
\operatorname{Tail}(v_{+}(t);x_0,R)\operatorname{d}\!t\,.
\end{eqnarray*}
Therefore, we complete the proof of Lemma \ref{lemma:imptail}\,.
\end{proof}

Now we demonstrate the local boundedness estimate with $L^1$-Tail for the weak solution of the mixed parabolic equation \eqref{eq:PDE-hom}.
\begin{lemma}
\label{lemma:locbd}
Let $v$ be a local weak solution to \eqref{eq:PDE-hom} in $\Omega_T$,
then for any $R\in(0,1]$ and $(t_0,x_0)\in I\times\Omega$ with $I^\ominus_{2R}(t_0)\times B_{2R}(x_0)\Subset I\times\Omega$, there exists a positive constant $C$, depending only on $d,\,s,\,\Lambda$, such that
\begin{eqnarray}\label{locbd}
  \sup_{Q_{\frac{R}{2}}(t_0,x_0)} |v(x,t)| \leq C\left( \fint_{Q_R(t_0,x_0)} |v(t,x)| \operatorname{d}\!x\operatorname{d}\!t +  \fint_{I_R^{\ominus}(t_0)} \operatorname{Tail}(v(t);x_0,R)\operatorname{d}\!t \right).
\end{eqnarray}
\end{lemma}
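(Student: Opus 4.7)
The strategy is a De~Giorgi level-set iteration on shrinking parabolic cylinders combined with a Young-type absorption that converts an intermediate $L^{2}$-mean estimate into the desired $L^{1}$-mean estimate \eqref{locbd}. Two ingredients let us handle the nonlocal tail term appearing in \eqref{eq:Cacc}, which does not decay with the level: (a) Lemma~\ref{lemma:imptail} provides a uniform (in the iteration index) bound on $\sup_{t}\int(v-k_{j})_{+}^{2}\operatorname{d}\! x$; and (b) the pinch
\[
(v-k_{j+1})_{+}\leq \frac{2^{j+1}}{\kappa}(v-k_{j})_{+}^{2}\quad\text{on }\{v>k_{j+1}\},
\]
coming from the gap $k_{j+1}-k_{j}=\kappa\,2^{-j-1}$, supplies a factor $\kappa^{-1}$ per step.

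\textbf{Stage 1: intermediate bound with $L^{2}$-mean.} I will establish that for any $1/2\leq\sigma_{1}<\sigma_{2}\leq 3/4$,
\begin{equation*}
\sup_{Q_{\sigma_{1}R}(t_{0},x_{0})}v_{+}\leq \frac{C}{(\sigma_{2}-\sigma_{1})^{\alpha}}\!\left[\left(\fint_{Q_{\sigma_{2}R}(t_{0},x_{0})}\!v_{+}^{2}\operatorname{d}\! x\operatorname{d}\! t\right)^{\!1/2}+\fint_{I_{\sigma_{2}R}^{\ominus}(t_{0})}\!\operatorname{Tail}(v_{+}(t);x_{0},\sigma_{2}R)\operatorname{d}\! t\right].
\end{equation*}
Fix $\sigma_{1}<\sigma_{2}\leq 3/4$, put $r_{j}:=\sigma_{1}R+(\sigma_{2}-\sigma_{1})R\,2^{-j}$, $k_{j}:=\kappa(1-2^{-j})$ with $\kappa>0$ to be chosen, and set $A_{j}:=\int_{Q_{r_{j}}}(v-k_{j})_{+}^{2}\operatorname{d}\! x\operatorname{d}\! t$. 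Applying \eqref{eq:Cacc} to the subsolution $(v-k_{j+1})_{+}$ with $\rho_{1}=\rho_{2}=r_{j}-r_{j+1}$, combining with the parabolic Gagliardo--Nirenberg embedding, H\"older's inequality, and the Chebyshev bound $|\{v>k_{j+1}\}\cap Q_{r_{j+1}}|\leq (2^{j+1}/\kappa)^{2}A_{j}$, produces a recurrence of the form $A_{j+1}\leq C b^{j}\kappa^{-\beta}(A_{j}^{1+\beta}+T_{j}')$, where $T_{j}'$ is the nonlocal tail contribution. The core technical step is controlling $T_{j}'$: splitting $\mathbb{R}^{d}\setminus B_{r_{j}}=(B_{R}\setminus B_{r_{j}})\cup(\mathbb{R}^{d}\setminus B_{R})$ and using $r_{j}\geq R/2$, $R\leq 1$, $s<1$ yields the pointwise bound $\operatorname{Tail}((v-k_{j+1})_{+}(t);x_{0},r_{j})\leq CR^{-d}\|v(t)\|_{L^{1}(B_{R})}+\operatorname{Tail}(v(t);x_{0},R)=:F(t)$; coupling the pinch~(b) with Lemma~\ref{lemma:imptail} (which gives $\sup_{t,j}\!\int_{B_{r_{j}}}(v-k_{j})_{+}^{2}\leq N$ for a constant $N$ controlled by the right-hand side of Stage~1 squared, since $r_{j}\leq 3R/4\leq 5R/6$ and $(v-k_{j})_{+}\leq v_{+}$) together with $\int_{I_{R}^{\ominus}}F\operatorname{d}\! t\leq CR^{2}[\fint_{Q_{R}}|v|+\fint\operatorname{Tail}(v(t);x_{0},R)\operatorname{d}\! t]$ gives $T_{j}'\leq Cb^{j}N\kappa^{-1}$ times the right-hand side of \eqref{locbd}. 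Choosing $\kappa$ as a sufficiently large multiple of the Stage~1 right-hand side forces the smallness hypothesis of Lemma~\ref{iterate1} (or its two-sequence analogue Lemma~\ref{iterate2}), yielding $A_{j}\to 0$ and hence $v\leq\kappa$ a.e.\ on $Q_{\sigma_{1}R}$.

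\textbf{Stage 2: $L^{2}$-mean to $L^{1}$-mean.} Setting $\Phi(\sigma):=\sup_{Q_{\sigma R}}v_{+}$, the elementary interpolation
\[
\left(\fint_{Q_{\sigma_{2}R}}\!v_{+}^{2}\operatorname{d}\! x\operatorname{d}\! t\right)^{\!1/2}\leq \Phi(\sigma_{2})^{1/2}\left(\fint_{Q_{\sigma_{2}R}}\!v_{+}\operatorname{d}\! x\operatorname{d}\! t\right)^{\!1/2}\leq C\,\Phi(\sigma_{2})^{1/2}\left(\fint_{Q_{R}}\!|v|\operatorname{d}\! x\operatorname{d}\! t\right)^{\!1/2}
\]
together with the analogous comparison $\fint\operatorname{Tail}(v_{+}(t);x_{0},\sigma_{2}R)\operatorname{d}\! t\leq C[\fint_{Q_{R}}|v|+\fint\operatorname{Tail}(v(t);x_{0},R)\operatorname{d}\! t]$ and Young's inequality converts the Stage~1 bound into
\begin{equation*}
\Phi(\sigma_{1})\leq \tfrac{1}{2}\Phi(\sigma_{2})+\frac{C}{(\sigma_{2}-\sigma_{1})^{2\alpha}}\!\left[\fint_{Q_{R}(t_{0},x_{0})}\!|v|\operatorname{d}\! x\operatorname{d}\! t+\fint_{I_{R}^{\ominus}(t_{0})}\!\operatorname{Tail}(v(t);x_{0},R)\operatorname{d}\! t\right].
\end{equation*}
Lemma~\ref{iterate3} with $\vartheta=1/2$ applied on the interval $[1/2,3/4]$ absorbs $\Phi(\sigma_{2})$ and delivers the bound for $\sup_{Q_{R/2}}v_{+}$; the supersolution versions of Lemmas~\ref{lemma:Cacc} and \ref{lemma:imptail} applied to $v_{-}$ yield the symmetric bound, and combining gives \eqref{locbd}. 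The principal obstacle is Stage~1: ensuring that the non-decaying datum $\operatorname{Tail}(v(t);x_{0},R)$ does not spoil the geometric convergence of $A_{j}$, which is precisely where the pinch~(b) and the uniform $L^{\infty}_{t}L^{2}_{x}$ control provided by Lemma~\ref{lemma:imptail} are combined.
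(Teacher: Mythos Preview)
Your Stage~2 (the $L^{2}$-to-$L^{1}$ reduction via Young's inequality and Lemma~\ref{iterate3}) matches the paper and is fine. The gap is in Stage~1: the De~Giorgi recursion you describe does not close. After the Caccioppoli estimate, the parabolic Sobolev embedding, and H\"older's inequality against the level-set measure, one obtains (with $\beta=\tfrac{2}{d+2}$)
\[
A_{j+1}\;\le\;C\,b^{j}\,\kappa^{-2\beta}R^{-2}A_{j}^{\,1+\beta}\;+\;C\,b^{j}\,\kappa^{-2\beta}A_{j}^{\,\beta}\cdot\bigl[\text{tail contribution}\bigr],
\]
because the factor $|A(k_{j+1})|^{\beta}\le C(2^{j}/\kappa)^{2\beta}A_{j}^{\beta}$ coming from the level-set measure multiplies \emph{both} the local and the nonlocal parts of the Caccioppoli right-hand side. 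Your pinch~(b) together with Lemma~\ref{lemma:imptail} bounds the tail contribution by $C\,2^{j}\kappa^{-1}N\int_{I^{\ominus}_{R}}F\,\mathrm{d}t$, a quantity \emph{independent of} $A_{j}$. Hence the second term is sublinear, proportional to $A_{j}^{\beta}$ with $\beta<1$. No choice of $\kappa$ makes Lemma~\ref{iterate1} (or Lemma~\ref{iterate2}) apply to such a recursion: once $A_{j}$ is small the sublinear term dominates and forces $A_{j+1}\gtrsim b^{j}A_{j}^{\beta}\gg A_{j}$, so $A_{j}\to 0$ is impossible.

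The paper proceeds differently. It first runs the De~Giorgi iteration keeping the $L^{\infty}_{t}$-tail, choosing the final level $M\ge\delta\sup_{t\in I^{\ominus}_{3R/4}}\operatorname{Tail}(v_{+}(t);x_{0},R/2)$; then the tail factor in the recursion becomes a harmless constant, the full power $A_{j}^{1+\beta}$ is retained, and Lemma~\ref{iterate1} applies to give an intermediate bound with $\sup_{t}$-tail on the right. Only \emph{afterwards} is this converted to an $L^{1}_{t}$-tail, by a localization trick: pick a spatial cutoff $\psi\in C_{0}^{\infty}(B_{5R/6})$ with $\psi\equiv 1$ on $B_{4R/5}$, set $f:=-\mathcal{L}\bigl((1-\psi)v\bigr)$, and note that
\[
\underline{v}(t,x):=\psi(x)v(t,x)-\int_{t_{0}-(3R/4)^{2}}^{t}\|f_{+}(\tau)\|_{L^{\infty}(B_{3R/4})}\,\mathrm{d}\tau
\]
is a subsolution of \eqref{eq:PDE-hom} on $Q_{3R/4}$. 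Applying the intermediate bound to $\underline{v}$, the $\sup_{t}$-tail now acts on $(\psi v)_{+}$, which is spatially supported in $B_{5R/6}$ and is therefore controlled by $\sup_{t}\bigl(\fint_{B_{5R/6}}v_{+}^{2}\bigr)^{1/2}$ via Lemma~\ref{lemma:imptail}; the remaining term $\int\|f_{+}\|_{L^{\infty}}\,\mathrm{d}t$ is directly bounded by the $L^{1}_{t}$-tail of $v_{+}$. This cutoff step is the missing idea in your proposal.
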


\begin{proof} In order to prove \eqref{locbd},
we first claim that
\begin{equation}\label{bdd}
  \sup_{Q_{\frac{R}{2}}(t_0,x_0)} |v| \leq
  C \delta^{-\frac{d+2}{4}}\left(\fint_{Q_R(t_0,x_0)} |v(t,x)|^2 \operatorname{d}\!x\operatorname{d}\!t\right)^{\frac{1}{2}} + C\fint_{ I_R^{\ominus}(t_0)} \operatorname{Tail}(v(t);x_0,R)\operatorname{d}\!t
\end{equation}
for any $\delta\in(0,1]$, where $C=C(d,s,\Lambda)$.

Let $r,\, \rho>0$ such that $\rho\leq\frac{R}{2}\leq r\leq r+\rho\leq R$.
Selecting $\rho_1=\rho_2=\rho$,
then by analogy to the proof of Lemma \ref{lemma:Cacc}\,, we can deduce that
\begin{eqnarray*}
&& \sup_{t\in I_{r}^{\ominus}(t_0)}\int_{B_r(x_0)} w^2_{+}(t,x) \operatorname{d}\! x +\int_{I_{r}^{\ominus}(t_0)}\int_{B_{r+\rho}(x_0)} |D(w_{+}\phi)(t,x)|^2\operatorname{d}\! x\operatorname{d}\! t \nonumber\\
 &\leq&  C \rho^{-2}\int_{I_{r + \rho}^{\ominus}(t_0)} \int_{B_{r+\rho}(x_0)} w^2_{+}(t,x) \operatorname{d}\! x \operatorname{d}\! t \nonumber\\
&&+ C\left(\frac{r+\rho}{\rho}\right)^d \rho^{-2} \int_{I_{r+\rho}^{\ominus}(t_0)} \int_{B_{r+\rho}(x_0)} w_{+}(t,x) \operatorname{d}\!x \operatorname{d}\!t \sup_{t\in I_{r+\rho}^{\ominus}(t_0)}\operatorname{Tail}(w_{+}(t);x_0,r+\rho),
\end{eqnarray*}
where the positive constant $C=C(d,\Lambda,s)$. To proceed, a combination of H\"{o}lder's inequality with Sobolev embedding theorem and Young's inequality yields that
\begin{eqnarray}\label{bdd2}
  && \|w^2_+\|_{L^{1+\frac{2}{d}}(I_{r}^{\ominus}(t_0)\times B_{r}(x_0))}\nonumber\\
   &\leq&C(d)\sup_{t\in I_{r}^{\ominus}(t_0)}\left(\int_{B_r(x_0)} w^2_{+}(t,x) \operatorname{d}\! x\right)^{\frac{2}{d+2}}\left(\int_{I_{r}^{\ominus}(t_0)}\int_{B_{r+\rho}(x_0)} |D(w_{+}\phi)(t,x)|^2\operatorname{d}\! x\operatorname{d}\! t\right)^{\frac{d}{d+2}}\nonumber\\
  &\leq&C \sup_{t\in I_{r}^{\ominus}(t_0)}\int_{B_r(x_0)} w^2_{+}(t,x) \operatorname{d}\! x +\int_{I_{r}^{\ominus}(t_0)}\int_{B_{r+\rho}(x_0)} |D(w_{+}\phi)(t,x)|^2\operatorname{d}\! x\operatorname{d}\! t\nonumber\\
 &\leq&  C \sigma(r,\rho)\left(\|w^2_+\|_{L^1(I_{r+\rho}^{\ominus}(t_0)\times B_{r+\rho}(x_0))}\right.\nonumber\\
 &&\left.+ \|w_+\|_{L^1(I_{r+\rho}^{\ominus}(t_0)\times B_{r+\rho}(x_0))}\sup_{t\in I_{r+\rho}^{\ominus}(t_0)}\operatorname{Tail}(w_{+}(t);x_0,r+\rho)\right)\,,
\end{eqnarray}
where $\sigma(r,\rho):=\left(\frac{r+\rho}{\rho}\right)^d\rho^{-2}$ and the positive constant $C=C(d,\Lambda,s)>0$. For $l>0$, we denote $w_l(t,x):=(v(t,x)-l)_+$ and set
$$|A(l,r)|:= \int_{I_{r}^{\ominus}(t_0)} \left|\{x\in B_r(x_0) \mid v(t,x)>l\}\right|\operatorname{d}\! t\,,$$
then it follows from H\"{o}lder's inequality and \eqref{bdd2} that
\begin{eqnarray}\label{bdd3}
\|w^2_l\|_{L^1(I_{r}^{\ominus}(t_0)\times B_{r}(x_0))} &\leq& |A(l,r)|^{\frac{2}{d+2}}
\|w^2_l\|_{L^{1+\frac{2}{d}}(I_{r}^{\ominus}(t_0)\times B_{r}(x_0))}\nonumber\\
&\leq&C  |A(l,r)|^{\frac{2}{d+2}}\sigma(r,\rho)\left(\|w^2_l\|_{L^1(I_{r+\rho}^{\ominus}(t_0)\times B_{r+\rho}(x_0))}\right.\nonumber\\
&&\left. + \|w_l\|_{L^1(I_{r+\rho}^{\ominus}(t_0)\times B_{r+\rho}(x_0))}
  \sup_{t\in I_{r+\rho}^{\ominus}(t_0)}\operatorname{Tail}(v_+(t);x_0,r+\rho)\right)\,.
\end{eqnarray}
Let $k\in(0,l)$ be arbitrary, then by means of Chebyshev's inequality and a direct calculation one can derive the following
\begin{eqnarray*}
  \|w^2_l\|_{L^1(I_{r+\rho}^{\ominus}(t_0)\times B_{r+\rho}(x_0))} &\leq& \|w^2_k\|_{L^1(I_{r+\rho}^{\ominus}(t_0)\times B_{r+\rho}(x_0))}\,, \\
  \|w_l\|_{L^1(I_{r+\rho}^{\ominus}(t_0)\times B_{r+\rho}(x_0))} &\leq& \frac{\|w^2_k\|_{L^1(I_{r+\rho}^{\ominus}(t_0)\times B_{r+\rho}(x_0))}}{l-k}\,, \\
   |A(l,r)|&\leq& \frac{\|w^2_k\|_{L^1(I_{r+\rho}^{\ominus}(t_0)\times B_{r+\rho}(x_0))}}{(l-k)^2}\,.
\end{eqnarray*}
Substituting the above three inequalities into \eqref{bdd3}, we obtain
\begin{eqnarray}\label{bdd4}
\|w^2_l\|_{L^1(I_{r}^{\ominus}(t_0)\times B_{r}(x_0))}
&\leq&C(l-k)^{-\frac{4}{d+2}}\sigma(r,\rho) \|w^2_k\|_{L^1(I_{r+\rho}^{\ominus}(t_0)\times B_{r+\rho}(x_0))}^{1+\frac{2}{d+2}}\nonumber\\
&&\times \left(1+\frac{\displaystyle\sup_{t\in I_{r+\rho}^{\ominus}(t_0)}\operatorname{Tail}(v_+(t);x_0,r+\rho)}{l-k}\right),
\end{eqnarray}
where the positive constant $C$ depends only on $d$, $s$, and $\Lambda$.

The next step is to build a De Giorgi iteration scheme to iterate the key estimate \eqref{bdd4}. For any $i\in \mathbb{N}$, we define
\begin{eqnarray*}
   && l_i=M(1-2^{-i}),\, \, w_i=(v-l_i)_+, \,\, A_i=\|w^2_i\|_{L^1(I_{r_i}^{\ominus}(t_0)\times B_{r_i}(x_0))},\\
   && \rho _i=2^{-i-1}R, \,\, r_0=\frac{3R}{4}, \,\, r_{i+1}=r_i-\rho_{i+1}=\frac{R}{2}\left(1+\big(\frac{1}{2}\big)^{i+2}\right),
\end{eqnarray*}
where $M>0$ is to be determined later. It is easy to verify that $\rho_i\leq\frac{R}{2}\leq r_i\leq r_i+\rho_i\leq \frac{3R}{4}$, and $\rho_i\searrow 0$, $r_i\searrow \frac{R}{2}$, $l_i\nearrow M$ as $i\rightarrow\infty$.
Hence, by virtue of \eqref{bdd4}, we deduce that
\begin{eqnarray*}
  A_{i+1} &\leq&\frac{C\sigma(r_{i+1},\rho_{i+1})}{(l_{i+1}-l_{i})^{\frac{4}{d+2}}} A_{i}^{1+\frac{2}{d+2}}\left(1+
  \frac{\displaystyle\sup_{t\in I_{r_{i+1}+\rho_{i+1}}^{\ominus}(t_0)}\operatorname{Tail}(v_+(t);x_0,r_{i+1}+\rho_{i+1})}{l_{i+1}-l_{i}}\right) \\
 &\leq&\frac{C2^{(d+3+\frac{4}{d+2})i}}{R^{2}M^{\frac{4}{d+2}}} A_{i}^{1+\frac{2}{d+2}}\left(1+
  \frac{\displaystyle\sup_{t\in I_{\frac{3R}{4}}^{\ominus}(t_0)}\operatorname{Tail}(v_+(t);x_0,\frac{R}{2})}{M}\right) \,.
\end{eqnarray*}
For any fixed $\delta\in(0,1]$, we first select
  $$ M\geq \delta\displaystyle\sup_{t\in I_{\frac{3R}{4}}^{\ominus}(t_0)}\operatorname{Tail}(v_+(t);x_0,\frac{R}{2}).$$
Denoting $b=2^{d+3+\frac{4}{d+2}}>1$, then we calculate that
\begin{equation}\label{bdd5}
  A_{i+1} \leq\frac{C_1 }{\delta R^{2}M^{\frac{4}{d+2}}} b^i A_{i}^{1+\frac{2}{d+2}},
\end{equation}
where $C_1$ is a positive constant depending exclusively on $d$, $s$, and $\Lambda$. We further choose
\begin{equation*}
  M=\delta\displaystyle\sup_{t\in I_{\frac{3R}{4}}^{\ominus}(t_0)}\operatorname{Tail}(v_+(t);x_0,\frac{R}{2})
  +C_1^{\frac{d+2}{4}}b^{\frac{(d+2)^2}{8}}\delta^{-\frac{d+2}{4}}R^{-\frac{d+2}{2}}A_0^{\frac{1}{2}},
\end{equation*}
then it leads to
\begin{equation}\label{bdd6}
  A_{0} \leq\left(\frac{C_1}{\delta R^{2}M^{\frac{4}{d+2}}}\right)^{-\frac{d+2}{2}}
   b^{-\frac{(d+2)^2}{4}}\,.
\end{equation}
In terms of \eqref{bdd5} and \eqref{bdd6}, we utilize the iteration lemma given by Lemma \ref{iterate1} to derive $\displaystyle\lim_{i\rightarrow\infty}A_i=0$, which implies that
\begin{eqnarray}\label{bdd7}
  \sup_{I_{\frac{R}{2}}^{\ominus}\times B_{\frac{R}{2}}}v_+(t,x) \leq C\delta^{-\frac{d+2}{4}}\left(\fint_{I_{\frac{3R}{4}}^{\ominus}(t_0)} \fint_{B_{\frac{3R}{4}}(x_0)} v_+^2(t,x) \operatorname{d}\! x \operatorname{d}\! t\right)^{\frac{1}{2}}+
   \delta\displaystyle\sup_{t\in I_{\frac{3R}{4}}^{\ominus}(t_0)}\operatorname{Tail}(v_+(t);x_0,\frac{R}{2}),
\end{eqnarray}
where the positive constant $C=C(d,\Lambda,s)$. The above proof process shows that \eqref{bdd7} also holds for the local weak subsolution of \eqref{eq:PDE-hom}.

The next objective is to improve the $L^\infty$-Tail in \eqref{bdd7} to the $L^1$-Tail.
Let $\psi\in C_0^\infty(B_{\frac{5R}{6}}(x_0))$ be a smooth cutoff function with $\psi\equiv1$ in $B_{\frac{4R}{5}}(x_0)$.
By a direct calculation, we obtain
\begin{equation*}
  \partial_t(\psi v)-\operatorname{div}(\mathcal{A}(t,x,D(\psi v)))+\mathcal{L}(\psi v)=-\mathcal{L}((1-\psi) v)=: f(t,x),\,\,\mbox{in}\,\, Q_{\frac{3R}{4}}(t_0,x_0).
\end{equation*}
It is then easy to verify that $$\underline{v}(t,x):=\psi(x)v(t,x)-\int^t_{t_0-(\frac{3R}{4})^2}\|f_+(\tau)\|_{L^\infty(B_{\frac{3R}{4}}(x_0))}\operatorname{d}\! \tau$$
is a subsolution of \eqref{eq:PDE-hom} in $Q_{\frac{3R}{4}}(t_0,x_0)$. Note that
\begin{equation*}
 (\psi(x)v(t,x))_+-\int^t_{t_0-(\frac{3R}{4})^2}\|f_+(\tau)\|_{L^\infty(B_{\frac{3R}{4}}(x_0))}\operatorname{d}\! \tau \leq(\underline{v}(t,x))_+\leq(\psi(x)v(t,x))_+,
\end{equation*}
it follows from \eqref{bdd7} and the choice of $\psi$ that
\begin{eqnarray}\label{bdd7-2}
  \sup_{I_{\frac{R}{2}}^{\ominus}(t_0)\times B_{\frac{R}{2}}(x_0)}v_+(t,x) &\leq &
  \int_{I_{\frac{3R}{4}}^\ominus(t_0)}\|f_+(t)\|_{L^\infty(B_{\frac{3R}{4}}(x_0))}\operatorname{d}\! t+
  C\delta^{-\frac{d+2}{4}}\left(\fint_{Q_{\frac{3R}{4}}(t_0,x_0)} v_+^2(t,x) \operatorname{d}\! x \operatorname{d}\! t\right)^{\frac{1}{2}}\nonumber\\
  &&+
   \delta\displaystyle\sup_{t\in I_{\frac{3R}{4}}^{\ominus}(t_0)}\operatorname{Tail}((\psi v)_+(t);x_0,\frac{R}{2}).
\end{eqnarray}
In order to estimate the first term on the right side of \eqref{bdd7-2}, we utilize the ellipticity condition \eqref{ellip} and the choice of $\psi$ to derive
\begin{eqnarray*}
f_+(t,x)&=&[-\mathcal{L}((1-\psi) v)]_+  \\
   &\leq&  C(\Lambda)\int_{\R^d\setminus B_{\frac{4R}{5}}(x_0)}\frac{v_+(t,y)}{|x-y|^{d+2s}}\operatorname{d}\! y\\
   &\leq&  C(\Lambda,d,s)\int_{\R^d\setminus B_{\frac{3R}{4}}(x_0)}\frac{v_+(t,y)}{|x_0-y|^{d+2s}}\operatorname{d}\! y
\end{eqnarray*}
for any $x\in B_{\frac{3R}{4}}(x_0)$. Combining the definition of the tail term with H\"{o}lder's inequality then gives
\begin{eqnarray}\label{bdd7-3}
 && \int_{I_{\frac{3R}{4}}^\ominus(t_0)}\|f_+(t)\|_{L^\infty(B_{\frac{3R}{4}}(x_0))}\operatorname{d}\! t\nonumber\\
 &\leq& C\fint_{I_{\frac{3R}{4}}^\ominus(t_0)}\operatorname{Tail}(v_+(t);x_0,\frac{3R}{4})\operatorname{d}\! t\nonumber\\
  &\leq&C\left( \fint_{Q_{R}(t_0,x_0)} v^2_{+}(t,x) \operatorname{d}\! x \operatorname{d}\! t\right)^\frac{1}{2}+C\fint_{I_{R}^\ominus(t_0)}\operatorname{Tail}(v_+(t);x_0,R)\operatorname{d}\! t\,.
\end{eqnarray}
We proceed to estimate the last term on the right side of \eqref{bdd7-2}. Applying the choice of $\psi$ and H\"{o}lder's inequality again, together with Lemma \ref{lemma:imptail}\,, we arrive at
\begin{eqnarray}\label{bdd7-4}
 && \displaystyle\sup_{t\in I_{\frac{3R}{4}}^{\ominus}(t_0)}\operatorname{Tail}((\psi v)_+(t);x_0,\frac{R}{2}) \nonumber \\
   &\leq& C(d)\displaystyle\sup_{t\in I_{\frac{5R}{6}}^{\ominus}(t_0)}\left(\fint_{B_{\frac{5R}{6}}^{\ominus}(x_0)} v_+^2(t,x) \operatorname{d}\! x \right)^{\frac{1}{2}} \nonumber  \\
  &\leq&C \left( \fint_{Q_{R}(t_0,x_0)} v^2_{+}(t,x) \operatorname{d}\! x \operatorname{d}\! t\right)^\frac{1}{2} + C \fint_{I_{R}^{\ominus}(t_0)}
\operatorname{Tail}(v_{+}(t);x_0,R)\operatorname{d}\!t\,,
\end{eqnarray}
where the positive constant $C=C(d,s,\Lambda)$.
Now substituting \eqref{bdd7-3} and \eqref{bdd7-4} into \eqref{bdd7-2}, we obtain
\begin{equation}\label{bdd7-5}
  \sup_{I_{\frac{R}{2}}^{\ominus}(t_0)\times B_{\frac{R}{2}}(x_0)}v_+(t,x) \leq
  C\delta^{-\frac{d+2}{4}}\left(\fint_{Q_{R}(t_0,x_0)} v_+^2(t,x) \operatorname{d}\! x \operatorname{d}\! t\right)^{\frac{1}{2}}+
   C\fint_{I_{R}^{\ominus}(t_0)}\operatorname{Tail}( v_+(t);x_0,R)\operatorname{d}\!t\,,
\end{equation}
where the constant $C=C(d,s,\Lambda)>0$.
The proof follows the same arguments as before, we can also show that
\begin{equation*}
  \sup_{I_{\frac{R}{2}}^{\ominus}(t_0)\times B_{\frac{R}{2}}(x_0)}v_-(t,x) \leq
  C\delta^{-\frac{d+2}{4}}\left(\fint_{Q_{R}(t_0,x_0)} v_-^2(t,x) \operatorname{d}\! x \operatorname{d}\! t\right)^{\frac{1}{2}}+
   C\fint_{I_{R}^{\ominus}(t_0)}\operatorname{Tail}( v_-(t);x_0,R)\operatorname{d}\!t\,.
\end{equation*}
Therefore, we confirm that the assertion \eqref{bdd} is valid, which implies that the weak solution $v$ is locally bounded.

Finally, we aim to reduce the exponent in the first term on the right side of \eqref{bdd}. A combination of \eqref{bdd} and Young's inequality yields that
\begin{equation}\label{bdd8}
  \sup_{Q_{\frac{R}{2}}(t_0,x_0)} |v| \leq \delta \sup_{Q_{R}(t_0,x_0)} |v|+
  C(d,s,\Lambda,\delta)\fint_{Q_{R}(t_0,x_0)} |v(t,x)| \operatorname{d}\! x \operatorname{d}\! t+
   C(d,s,\Lambda)\fint_{I_{R}^{\ominus}(t_0)}\operatorname{Tail}( v(t);x_0,R)\operatorname{d}\!t\,.
\end{equation}
Let $\frac{R}{2}\leq\varrho_1<\varrho_2\leq R$, performing a standard covering argument on \eqref{bdd8}, we can deduce that
\begin{eqnarray*}
  \sup_{Q_{\varrho_1}(t_0,x_0)} |v| \leq C_2\delta \sup_{Q_{\varrho_2}(t_0,x_0)} |v|+
  \frac{C}{(\varrho_2-\varrho_1)^{d+2}}\left(\int_{Q_{R}(t_0,x_0)} |v(t,x)| \operatorname{d}\! x \operatorname{d}\! t+R^d\int_{I_{R}^{\ominus}(t_0)}\operatorname{Tail}( v(t);x_0,R)\operatorname{d}\!t\right)\,,
\end{eqnarray*}
where the positive constants $C_2=C_2(d,s,\Lambda)$ and $C=C(d,s,\Lambda,\delta)$.
If we choose $\delta=\frac{1}{2C_2}$, then the first term on the right side can be absorbed by using the iteration Lemma \ref{iterate3}\,, which implies that the local boundedness estimate \eqref{locbd} with $L^1$-Tail holds. This completes the proof of Lemma \ref{lemma:locbd}\,.
\end{proof}

\subsection{Local H\"older regularity with an optimal Tail}

With the Caccioppoli inequality and local boundedness at our disposal,
it is crucial for us to further establish a growth lemma to prove H\"older continuity. Now we start with a so-called De Giorgi type of lemma, which states that if a measure density is achieved in a parabolic cylinder, then pointwise information can be extracted in a smaller parabolic cylinder with the same vertex.

\begin{lemma}
\label{lemma:growth-lemma1}
Let $v$ be a local weak supersolution to \eqref{eq:PDE-hom} in $\Omega_T$.
For every $R\in(0,1]$ and $(t_0,x_0)\in I\times\Omega$ with $I^\ominus_{2R}(t_0)\times B_{2R}(x_0)\Subset I\times\Omega$, any $\delta \in (0,1]$, $q\in(1,2]$ and $H > 0$, there exists $\lambda\in (0,1)$ depending only on $d,\,s,\,\Lambda,\,q,\,\delta$, such that if  $v \geq 0$ in $Q_{2R}(t_0,x_0)$, as well as
\begin{equation}\label{ularge}
\left| \left\{ v \leq H \right\} \cap I^{\ominus}_{\delta R}(t_0) \times B_R (x_0) \right| \leq \lambda |I^{\ominus}_{\delta R} (t_0)\times B_R(x_0)|
\end{equation}
and
\begin{equation}\label{eq:Tail-small}
\left(\fint_{I_{2R}^{\ominus}(t_0)} \operatorname{Tail}(v_-(t);x_0,2R)^q \operatorname{d}\! t \right)^{\frac{1}{q}} \leq H,
\end{equation}
then it holds that
\begin{equation*}
v(t,x) \geq \frac{H}{2} ~~ \text{ in } I^{\ominus}_{\frac{\delta R}{2}}(t_0) \times B_{\frac{R}{2}}(x_0).
\end{equation*}
\end{lemma}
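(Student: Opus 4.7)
The plan is to run a De Giorgi iteration at the geometrically decreasing levels $k_i := \tfrac{H}{2}(1+2^{-i})$, applied to the nonnegative truncations $w_i := (k_i - v)_+ \leq H$, over the nested shrinking cylinders
\[
Q_i := I^{\ominus}_{\delta r_i}(t_0) \times B_{r_i}(x_0), \qquad r_i := \tfrac{R}{2}(1+2^{-i}),
\]
so that $Q_0 = I^{\ominus}_{\delta R}(t_0) \times B_R(x_0)$ and $\bigcap_i Q_i = I^{\ominus}_{\delta R/2}(t_0) \times B_{R/2}(x_0)$. Writing $Y_i := H^{-2} \fint_{Q_i} w_i^2 \operatorname{d}\!x \operatorname{d}\!t \in [0,1]$, the goal reduces to showing $Y_i \to 0$, which forces $(H/2 - v)_+ \equiv 0$ a.e.\ on $\bigcap_i Q_i$.

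Since $v$ is a supersolution, Lemma \ref{lemma:Cacc} applies to each $w_i$ (as the $w_-$ of that lemma, with level $k_i$), after a mild decoupling of its time and spatial cutoffs—necessary to accommodate the mismatched time and space radii $\delta r_i$ and $r_i$. Combining with the parabolic Sobolev embedding exactly as in the proof of Lemma \ref{lemma:locbd} yields the higher-integrability estimate
\[
\|w_i^2\|_{L^{1+2/d}(Q_{i+1})} \leq C\,\delta^{-2}\, 4^{i} R^{-2} \Bigl(\int_{Q_i} w_i^2 \operatorname{d}\!x\operatorname{d}\!t + T_i\Bigr),
\]
where the extra factor $\delta^{-2}$ comes from the time cutoff and $T_i$ is the nonlocal tail contribution to Caccioppoli. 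For $T_i$, the hypothesis $v \geq 0$ on $Q_{2R}(t_0,x_0)$ gives $v_- \equiv 0$ on $B_{2R}(x_0)$, so that for every $\rho \leq R$,
\[
\operatorname{Tail}(w_i(t); x_0, \rho) \leq C H \rho^{2-2s} + \Bigl(\tfrac{\rho}{2R}\Bigr)^{2} \operatorname{Tail}(v_-(t); x_0, 2R),
\]
and H\"older's inequality in time together with the $L^q$-tail hypothesis \eqref{eq:Tail-small} absorbs $T_i$ into a constant consistent with the scaling of the principal term. Invoking Chebyshev's inequality on $\{w_{i+1}>0\} \subset \{w_i \geq H 2^{-(i+2)}\}$ and then interpolating $\|w_i^2\|_{L^1(Q_{i+1})}$ against $\|w_i^2\|_{L^{1+2/d}(Q_{i+1})}$ (exactly as in the derivation of \eqref{bdd4}) yields the recursion
\[
Y_{i+1} \leq C^\star\, b^{\,i}\, Y_i^{1+2/(d+2)},
\]
with $b = b(d) > 1$ universal and $C^\star = C^\star(d,s,\Lambda,q,\delta)$.

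The initial value is controlled by the measure-density hypothesis \eqref{ularge}: since $w_0 \leq H\,\mathbf{1}_{\{v<H\}}$,
\[
Y_0 \leq \frac{|\{v \leq H\} \cap Q_0|}{|Q_0|} \leq \lambda.
\]
Choosing $\lambda = \lambda(d,s,\Lambda,q,\delta)$ so small that $Y_0 \leq (C^\star)^{-(d+2)/2} b^{-(d+2)^2/4}$ activates the geometric-convergence Lemma \ref{iterate1}, forcing $Y_i \to 0$ and hence $v \geq H/2$ a.e.\ on $I^{\ominus}_{\delta R/2}(t_0) \times B_{R/2}(x_0)$. The principal technical obstacle is the joint handling of the non-parabolic cylinder geometry (which produces the $\delta^{-2}$ factor and forces $\lambda$ to depend on $\delta$) and the $L^q$ (rather than $L^\infty$) structure of the tail hypothesis \eqref{eq:Tail-small}; the latter is precisely the reason we must integrate the Caccioppoli tail in time via H\"older's inequality with exponent $q > 1$, instead of simply taking a supremum in $t$ as in the traditional $L^\infty$-tail framework.
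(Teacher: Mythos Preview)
Your overall architecture is right—De Giorgi iteration on the shrinking cylinders $Q_i$ with levels $k_i$—but the step where you say ``H\"older's inequality in time together with the $L^q$-tail hypothesis absorbs $T_i$ into a constant consistent with the scaling of the principal term'' does not close, and this is exactly the point where the $L^q$-tail (as opposed to $L^\infty$-tail) framework bites.

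Concretely, the tail term in the Caccioppoli inequality has the form
\[
T_i \;\sim\; 2^{(d+2)i} R^{-2} \int_{I^{\ominus}_{\delta r_i}}\!\Big(\int_{B_{r_i}} w_i\Big)\operatorname{Tail}(w_i(t);x_0,r_i)\,\operatorname{d}\!t .
\]
After your splitting of the tail and applying H\"older in $t$ with exponent $q$ to isolate $\operatorname{Tail}(v_-(t);x_0,2R)$, the companion factor is
\[
\Big(\int_{I^{\ominus}_{\delta r_i}}\Big(\int_{B_{r_i}} w_i\Big)^{q'}\operatorname{d}\!t\Big)^{1/q'}\;\le\; H\Big(\int_{I^{\ominus}_{\delta r_i}}\big|\{v(t,\cdot)\le k_i\}\cap B_{r_i}\big|^{q'}\operatorname{d}\!t\Big)^{1/q'}.
\]
This $L^{q'}$-in-time norm of the spatial slice measure is \emph{not} controlled by your single quantity $Y_i=H^{-2}\fint_{Q_i}w_i^2$. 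If you try to interpolate (using $|\{\cdot\}|\le|B_{r_i}|$), you get at best a contribution $\sim H^2|Q_i|\,Y_i^{1/q'}$ to the Caccioppoli right-hand side, hence after the Chebyshev/Sobolev step a recursion of the form
\[
Y_{i+1}\;\le\; C\,b^{\,i}\bigl(Y_i^{1+\frac{2}{d+2}}+Y_i^{\frac{2}{d+2}+\frac{1}{q'}}\bigr).
\]
For $d\ge 2$ and $q\in(1,2]$ one has $\tfrac{2}{d+2}+\tfrac{1}{q'}\le \tfrac12+\tfrac12=1$, so the second term is \emph{not} superlinear and Lemma~\ref{iterate1} does not apply. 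Bounding $\int_{B_{r_i}}w_i$ trivially by $H|B_{r_i}|$ is even worse: $T_i$ then carries no smallness at all.

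The paper resolves this by tracking \emph{two} quantities,
\[
A_i=\frac{|\{v\le k_i\}\cap Q_i|}{|Q_i|},\qquad
B_i=\Big(\fint_{I^{\ominus}_{\delta R_i}}\Big(\tfrac{|\{v(t,\cdot)\le k_i\}\cap B_{R_i}|}{|B_{R_i}|}\Big)^{q'}\operatorname{d}\!t\Big)^{\frac{d}{q'(d+2)}},
\]
so that the tail term is bounded by $H^2|Q_{R_i}|\,B_i^{1+2/d}$, and then runs the coupled recursion
\[
A_{i+1}\le \tfrac{C}{\delta^2}2^{(d+4)i}\bigl(A_i^{1+\frac{2}{d+2}}+A_i^{\frac{2}{d+2}}B_i^{1+\frac{2}{d}}\bigr),\qquad
B_{i+1}\le \tfrac{C}{\delta^2}2^{(d+4)i}\bigl(A_i+B_i^{1+\frac{2}{d}}\bigr),
\]
which is handled by the two-sequence iteration Lemma~\ref{iterate2}, with the initial smallness $A_1+B_1^{1+2/d}\le 2\lambda^{(q-1)/q}$ coming from \eqref{ularge}. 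The introduction of $B_i$ is not a convenience but a necessity forced by the $L^q$-tail hypothesis; your single-sequence scheme would be correct under an $L^\infty$-tail assumption, but not here.
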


\begin{proof}
For $i\in\mathbb{N_+}$, we define the following sequences
\begin{eqnarray*}
&& k_i = \frac{H}{2} + \frac{H}{2^i}, \,\, w_i = (v-k_{i})_-, \,\, R_i = \frac{R}{2} + \frac{R}{2^i},\\
&& A_i= \frac{\left| \left\{ v \leq k_i \right\} \cap I^{\ominus}_{\delta R_i} (t_0) \times B_{R_i} (x_0)\right|}{|I^{\ominus}_{\delta R_i} \times B_{R_i}|}, \\
&& B_i = \left( \fint_{I_{\delta R_i}^{\ominus}(t_0)} \left( \frac{|\{ v(t) \leq k_i \} \cap B_{R_i}(x_0)|}{|B_{R_i}|} \right)^{\frac{q}{q-1}}\operatorname{d}\! t \right)^{\frac{(q-1)d}{q(d+2)}}.
\end{eqnarray*}
Combining the Caccioppoli inequality \eqref{eq:Cacc} with H\"{o}lder's inequality and \eqref{eq:Tail-small}, we directly compute
\begin{eqnarray}\label{growth1-1}
&&\sup_{t\in I_{\delta R_{i+1}}^{\ominus}(t_0)}\int_{B_{R_{i+1}}(x_0)} w^2_{i}(t,x) \operatorname{d}\! x +\int_{I_{\delta R_{i+1}}^{\ominus}(t_0)}\int_{B_{ R_{i+1}}(x_0)} |Dw_{i}(t,x)|^2\operatorname{d}\! x\operatorname{d}\! t\nonumber\\
 &\leq&  C 2^{2i}(\delta R)^{-2} \int_{I_{\delta R_i}^{\ominus}(t_0)} \int_{B_{R_i}(x_0)} w^2_{i}(t,x) \operatorname{d}\! x \operatorname{d}\! t\nonumber\\
 &&+ C2^{i(d+2)}R^{-2}  \int_{I_{\delta R_i}^{\ominus}(t_0)} \left(\int_{B_{R_i}(x_0)} w_{i}(t,x) \operatorname{d}\!x \right) \operatorname{Tail}(w_{i}(t);x_0,R_i)\operatorname{d}\!t\nonumber\\
 &\leq&C2^{i(d+2)}R^{-2}H^2|Q_{R_i}|A_i+C2^{i(d+2)}R^{-2}H\int_{I_{\delta R_i}^{\ominus}(t_0)} \left(\int_{B_{R_i}(x_0)} \chi_{\{v\leq k_i\}}(t,x) \operatorname{d}\!x\right)\operatorname{Tail}(w_{i}(t);x_0,2R)\operatorname{d}\!t
 \nonumber\\
 &\leq&C2^{i(d+2)}R^{-2}H^2|Q_{R_i}|A_i+C2^{i(d+2)}R^{-2}H\int_{I_{\delta R_i}^{\ominus}(t_0)} \left(\int_{B_{R_i}(x_0)} \chi_{\{v\leq k_i\}}(t,x) \operatorname{d}\!x\right)\operatorname{Tail}(v_{-}(t);x_0,2R)\operatorname{d}\!t
 \nonumber\\
 &\leq&C2^{i(d+2)}R^{-2}H^2|Q_{R_i}|\left(A_i+B_i^{1+\frac{2}{d}}\right)\,.
\end{eqnarray}
Applying H\"{o}lder's inequality repeatedly, in combination with the Sobolev embedding theorem, Young's inequality and \eqref{growth1-1}, we deduce that
\begin{eqnarray*}
 && A_{i+1}|I^{\ominus}_{\delta R_{i+1}} \times B_{R_{i+1}}|\\
  &\leq& \int_{I_{\delta R_{i+1}}^{\ominus}(t_0)} \int_{B_{R_{i+1}}(x_0)}\frac{w_i^2}{(k_i-k_{i+1})^2}\operatorname{d}\!x\operatorname{d}\!t  \\
   &\leq& C2^{2i}H^{-2} \left( A_{i}|I^{\ominus}_{\delta R_{i}} \times B_{R_{i}}|\right)^{\frac{2}{d+2}} \left[\int_{I_{\delta R_{i+1}}^{\ominus}(t_0)} \left(\int_{B_{R_{i+1}}(x_0)}w_i^{2}\operatorname{d}\!x\right)^{\frac{2}{d}}
   \left(\int_{B_{R_{i+1}}(x_0)}w_i^{\frac{2d}{d-2}}\operatorname{d}\!x\right)^{\frac{d-2}{d}}\operatorname{d}\!t\right]^{\frac{d}{d+2}} \\
   &\leq& C2^{2i}H^{-2} \left( A_{i}|I^{\ominus}_{\delta R_{i}} \times B_{R_{i}}|\right)^{\frac{2}{d+2}} \\
   &&\times\left[\int_{I_{\delta R_{i+1}}^{\ominus}(t_0)} \left(\int_{B_{R_{i+1}}(x_0)}w_i^{2}\operatorname{d}\!x\right)^{\frac{2}{d}}
   \left(R_{i+1}^{-2}\int_{B_{R_{i+1}}(x_0)}w_i^{2}\operatorname{d}\!x
   +\int_{B_{R_{i+1}}(x_0)}|Dw_i|^{2}\operatorname{d}\!x\right)\operatorname{d}\!t\right]^{\frac{d}{d+2}}\\
    &\leq& C2^{2i}H^{-2} \left( A_{i}|I^{\ominus}_{\delta R_{i}} \times B_{R_{i}}|\right)^{\frac{2}{d+2}}\left(\sup_{t\in I_{\delta R_{i+1}}^{\ominus}(t_0)}\int_{B_{R_{i+1}}(x_0)} w^2_{i} \operatorname{d}\! x +\int_{I_{\delta R_{i+1}}^{\ominus}(t_0)}\int_{B_{ R_{i+1}}(x_0)} |Dw_{i}|^2\operatorname{d}\! x\operatorname{d}\! t\right)\\
   &\leq&C2^{(d+4)i}\delta^{\frac{4}{d+2}}|Q_{R_i}|\left(A_i^{1+\frac{2}{d+2}}+A_i^{\frac{2}{d+2}}B_i^{1+\frac{2}{d}}\right).
\end{eqnarray*}
Then it follows that
\begin{equation}\label{growth1-2}
  A_{i+1}\leq \frac{C_3}{\delta^2}2^{(d+4)i}\left(A_i^{1+\frac{2}{d+2}}+A_i^{\frac{2}{d+2}}B_i^{1+\frac{2}{d}}\right),
\end{equation}
where the positive constant $C_3$ only depends on $d$, $s$ and $\Lambda$.
By analogy with the estimates of $A_{i+1}$ in \eqref{growth1-2}, we can further obtain
\begin{equation}\label{growth1-3}
  B_{i+1}\leq \frac{C_4}{\delta^2}2^{(d+4)i}\left(A_i+B_i^{1+\frac{2}{d}}\right),
\end{equation}
where the constant $C_4=C_4(d,s,\Lambda)>0$. Note that
\begin{equation*}
  A_1+B_1^{1+\frac{2}{d}}\leq  A_1+A_1^{\frac{q-1}{q}}\leq 2A_1^{\frac{q-1}{q}}\leq  2 \lambda^{\frac{q-1}{q}},
\end{equation*}
which is ensured by H\"{o}lder's inequality and the assumption \eqref{ularge}.
From this, combining \eqref{growth1-2} with \eqref{growth1-3}, and applying a classical iteration Lemma \ref{iterate2} to derive that there exists $\lambda >0$ sufficiently small, depending only on $d,\,s,\,\Lambda,\,\delta,\,q$, such that
the sequences $\{A_i\}$ and $\{B_i\}$ tend to zero as $i\rightarrow \infty$.
Hence, we conclude that
\begin{equation*}
v(t,x) \geq \frac{H}{2} ~~ \text{ in } I^{\ominus}_{\frac{\delta R}{2}}(t_0) \times B_{\frac{R}{2}}(x_0),
\end{equation*}
which completes the proof of Lemma \ref{lemma:growth-lemma1}\,.
\end{proof}

In the next step we establish a lemma that shows the forward propagation of the measure theoretical information in time.
\begin{lemma}
\label{lemma:growth-lemma2}
Let $\alpha \in (0,1]$ and
$v$ be a local weak supersolution to \eqref{eq:PDE-hom} in $\Omega_T$.
For every $R\in(0,1]$ and $(t_0,x_0)\in I\times\Omega$ with $I^\ominus_{2R}(t_0)\times B_{2R}(x_0)\Subset I\times\Omega$, any $q\in(1,2]$  and $H > 0$, if $v \ge 0$ in $Q_{2R}(t_0,x_0)$
and
\begin{equation}
\label{eq:measure-ass1}
\left| \left\{ v(t_1,\cdot) \ge H \right\} \cap B_R (x_0)\right| \ge \alpha |B_R(x_0)| \,\,\mbox{for some}\,\, t_1 \in I_{2 R}^{\ominus}(t_0) ,
\end{equation}
then either
\begin{equation}
\label{Tail-large}
\left(\fint_{I_{2R}^{\ominus}(t_0)} \operatorname{Tail}(v_-(t);x_0,2R)^q \operatorname{d}\! t \right)^{\frac{1}{q}}> H,
\end{equation}
or there exists $\varepsilon,\,\delta\in(0,1)$ depending only on $d,\,s,\, \Lambda,\,q,\,\alpha$, such that
\begin{equation*}
\left| \left\{ v (t,\cdot)\geq \varepsilon H \right\} \cap B_R(x_0) \right| \geq \frac{\alpha}{2}| B_R(x_0)|
\end{equation*}
for almost everywhere $t\in I_{\delta R}^{\oplus}(t_1)\subset I_{2R}^{\ominus}(t_0)$.
\end{lemma}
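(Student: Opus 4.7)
I would argue contrapositively. Assume \eqref{Tail-large} fails, so that $\bigl(\fint_{I_{2R}^{\ominus}(t_0)} \operatorname{Tail}(v_{-}(t); x_0, 2R)^{q}\operatorname{d} t\bigr)^{1/q} \leq H$. Fix parameters $\varepsilon, \delta \in (0,1)$ to be determined, set $k := \varepsilon H$, and consider the upper truncation $w_{-} := (k - v)_{+}$. Since $v \geq 0$ on $Q_{2R}(t_0,x_0)$, one has $0 \leq w_{-} \leq k$ on that cylinder. The measure hypothesis \eqref{eq:measure-ass1} combined with $\{v(t_1,\cdot) \geq H\} \subset \{w_{-}(t_1,\cdot) = 0\}$ then yields the initial-data bound
\[
\int_{B_R(x_0)} w_{-}^{2}(t_1, x)\operatorname{d} x \leq k^{2}(1 - \alpha)|B_R(x_0)|.
\]

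Next, I would run a forward-in-time energy estimate for $w_{-}$: test the supersolution formulation with $\phi^{2}(x) w_{-}(t, x)$ on the slab $(t_1, t) \times B_{R+\rho}(x_0)$ for arbitrary $t \in I_{\delta R}^{\oplus}(t_1)$, using a spatial cutoff $\phi \in C_{0}^{\infty}(B_{R+\rho}; [0,1])$ with $\phi \equiv 1$ on $B_R(x_0)$, $|D\phi| \lesssim \rho^{-1}$, and $\rho \asymp R$. This reproduces the argument of Lemma \ref{lemma:Cacc} with the time radius $\delta R$ decoupled from the spatial radius $R$, and delivers
\[
\sup_{t \in I_{\delta R}^{\oplus}(t_1)}\int_{B_R(x_0)} w_{-}^{2}(t, x)\operatorname{d} x \leq \int_{B_{R+\rho}(x_0)}w_{-}^{2}(t_1, x)\operatorname{d} x + C\delta^{2}k^{2}|B_R(x_0)| + \mathcal{T},
\]
where the energy term is bounded by $C\rho^{-2}k^{2}(\delta R)^{2}|B_R| \asymp C\delta^{2}k^{2}|B_R|$ because $w_{-} \leq k$ on $Q_{2R}$. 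For the nonlocal tail contribution $\mathcal{T}$, the bound $w_{-}(t, y) \leq k + v_{-}(t, y)$ for $|y - x_0| > R$ together with $R \leq 1$ gives $\operatorname{Tail}(w_{-}(t); x_0, R) \leq C(k + \operatorname{Tail}(v_{-}(t); x_0, 2R))$; combined with $\int_{B_R} w_{-}\operatorname{d} x \leq k|B_R|$, H\"older's inequality in time, and the failure of \eqref{Tail-large}, this yields $\mathcal{T} \leq C(\delta^{2}k^{2} + \delta^{2(1-1/q)} k H)|B_R|$. Altogether,
\[
\int_{B_R(x_0)} w_{-}^{2}(t, x)\operatorname{d} x \leq \varepsilon^{2} H^{2}\Bigl[(1-\alpha) + C\delta^{2} + C\varepsilon^{-1}\delta^{2(1-1/q)}\Bigr]|B_R(x_0)|
\]
for every $t \in I_{\delta R}^{\oplus}(t_1)$.

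The conclusion will then follow by a Chebyshev step. For any $\tilde{\varepsilon} \in (0, \varepsilon)$, the identity $\{v(t) < \tilde{\varepsilon}H\} = \{w_{-}(t) > (\varepsilon - \tilde{\varepsilon})H\}$ together with Chebyshev gives
\[
|\{v(t) < \tilde{\varepsilon}H\}\cap B_R(x_0)| \leq \frac{1}{(1 - \tilde{\varepsilon}/\varepsilon)^{2}}\Bigl[(1-\alpha) + C\delta^{2} + C\varepsilon^{-1}\delta^{2(1-1/q)}\Bigr]|B_R(x_0)|.
\]
I would choose $\tilde{\varepsilon} = c(\alpha)\varepsilon$ with $c(\alpha) = \tfrac{1}{2}\bigl(1 - \sqrt{(1-\alpha)/(1-\alpha/2)}\bigr) > 0$, so that $(1-\alpha)/(1-c(\alpha))^{2}$ sits strictly below $1 - \alpha/2$, leaving a positive gap $\kappa(\alpha) > 0$. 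Then I would fix $\varepsilon = \varepsilon(\alpha)$ and, subsequently, $\delta = \delta(\alpha, d, s, \Lambda, q)$ small enough that $C\delta^{2} + C\varepsilon^{-1}\delta^{2(1-1/q)} \leq \kappa(\alpha)$. This produces $|\{v(t) \geq \tilde{\varepsilon}H\}\cap B_R(x_0)| \geq (\alpha/2)|B_R(x_0)|$ for a.e. $t \in I_{\delta R}^{\oplus}(t_1)$; renaming $\tilde{\varepsilon}$ as $\varepsilon$ completes the proof.

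The main obstacle will be the treatment of the nonlocal tail under the weaker $L^{q}$-Tail assumption rather than an $L^{\infty}$-Tail: the Caccioppoli tail contribution carries an extra $\varepsilon^{-1}$ factor relative to the quadratic scaling of the energy term, but this loss can be absorbed thanks to the favorable smallness $\delta^{2(1-1/q)}$ produced by H\"older's inequality in time against the $L^{q}$-tail. The correct ordering of parameter choices—first $\varepsilon$ depending only on $\alpha$ through the Chebyshev margin, then $\delta$ small enough to absorb the tail loss—will be the key technical point.
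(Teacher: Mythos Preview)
Your overall strategy matches the paper's, but there is a genuine gap in the spatial cutoff choice. You take the cutoff $\phi$ supported in $B_{R+\rho}(x_0)$ with $\phi\equiv 1$ on $B_R(x_0)$ and $\rho\asymp R$, so the Caccioppoli estimate carries the initial-data term $\int_{B_{R+\rho}(x_0)} w_-^2(t_1,x)\operatorname{d}\!x$. The measure hypothesis \eqref{eq:measure-ass1}, however, only controls $w_-(t_1,\cdot)$ on $B_R(x_0)$; on the annulus $B_{R+\rho}(x_0)\setminus B_R(x_0)$ you merely know $w_-\le k$ (from $v\ge 0$), whence
\[
\int_{B_{R+\rho}(x_0)} w_-^2(t_1,x)\,\operatorname{d}\!x
\;\le\; k^2\bigl[(1-\alpha) + (1+\rho/R)^d-1\bigr]|B_R|.
\]
With $\rho\asymp R$ the bracketed quantity exceeds $1$, so the displayed bound $\varepsilon^2H^2[(1-\alpha)+\dots]|B_R|$ is false and the subsequent Chebyshev step is useless.

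The paper repairs this by running the Caccioppoli estimate with the \emph{outer} ball equal to $B_R(x_0)$ and the \emph{inner} ball equal to $B_{(1-\epsilon)R}(x_0)$ for a small auxiliary parameter $\epsilon$; the initial-data term then sits on $B_R(x_0)$ and inherits the clean $(1-\alpha)H^2|B_R|$ bound directly. The price is that the final Chebyshev step takes place on $B_{(1-\epsilon)R}$ and one must add $|B_R\setminus B_{(1-\epsilon)R}|\le d\epsilon|B_R|$ when passing back to $B_R$, and the energy/tail terms carry factors $\epsilon^{-2}$ and $\epsilon^{-d-2}$. The parameters are then fixed in the order $\epsilon$ (small in $\alpha$), $\delta$ (small in $\epsilon,\alpha,q$), and finally $\varepsilon$. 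Your argument can be salvaged the same way: either shrink the inner ball, or take $\rho=\epsilon R$ with $\epsilon$ small depending on $\alpha$ and track the resulting $\epsilon^{-2},\epsilon^{-d-2}$ losses in the energy and tail terms before choosing $\delta$. (Incidentally, the paper truncates at level $H$ rather than $\varepsilon H$; this avoids the $\varepsilon^{-1}$ factor in your tail estimate, but that is cosmetic compared to the annulus issue.)
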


\begin{proof}
We may assume that \eqref{Tail-large} is invalid, then a combination of the Caccioppoli inequality \eqref{eq:Cacc2}, H\"{o}lder's inequality and the assumption \eqref{eq:measure-ass1} yields that
\begin{eqnarray*}
 && \int_{B_{(1-\epsilon)R}(x_0)} (v-H)^2_{-}(t,x) \operatorname{d}\! x \nonumber\\
 &\leq&  \int_{B_{R}(x_0)} (v-H)^2_{-}(t_1,x) \operatorname{d}\! x+ C (\epsilon R)^{-2}\int_{I_{\delta R}^{\oplus}(t_1)} \int_{B_{R}(x_0)} (v-H)^2_{-}(t,x) \operatorname{d}\! x \operatorname{d}\! t \nonumber\\
&&+ C\epsilon^{-d-2} R^{-2} \int_{I_{\delta R}^{\oplus}(t_1)} \left(\int_{B_{R}(x_0)} (v-H)_{-}(t,x) \operatorname{d}\!x \right) \operatorname{Tail}((v-H)_{-}(t);x_0,R)\operatorname{d}\!t\nonumber\\
&\leq&\left((1-\alpha)+C\epsilon^{-2}\delta^2+C\epsilon^{-d-2}\delta^{\frac{2(q-1)}{q}}\right)H^2|B_R|
\end{eqnarray*}
holds for almost everywhere  $t\in I_{\delta R}^{\oplus}(t_1)$, where the constant $\epsilon \in(0,1)$ is to be determined later.
From this, a direct calculation leads to
\begin{eqnarray*}
   && \left| \left\{ v (t,\cdot)\leq \varepsilon H \right\} \cap B_R(x_0) \right| \\
   &\leq& \left| \left\{ v (t,\cdot)\leq \varepsilon H \right\} \cap B_{(1-\epsilon)R}(x_0) \right|+ \left| B_{R}(x_0)\setminus B_{(1-\epsilon)R}(x_0)\right| \\
   &\leq& ((1-\varepsilon)H)^{-2} \int_{B_{(1-\epsilon)R}(x_0)} (v-H)^2_{-}(t,x) \operatorname{d}\! x+d\epsilon |B_R|\\
   &\leq&(1-\varepsilon)^{-2}\left((1-\alpha)+C\epsilon^{-2}\delta^2+C\epsilon^{-d-2}\delta^{\frac{2(q-1)}{q}}+d\epsilon\right) |B_R|.
\end{eqnarray*}
To proceed, we choose appropriate $\epsilon$, $\delta$ and $\varepsilon$, depending only on $d,\,s,\, \Lambda,\,q,\,\alpha$, such that
\begin{equation*}
  \left| \left\{ v (t,\cdot)\leq \varepsilon H \right\} \cap B_R(x_0) \right| \leq (1-\frac{\alpha}{2})|B_R|
\end{equation*}
for almost all  $t\in I_{\delta R}^{\oplus}(t_1)$.
This implies that the desired result is true, thus completing the proof of Lemma \ref{lemma:growth-lemma2}\,.
\end{proof}
In the sequel, we establish a measure shrinking lemma.
\begin{lemma}
\label{lemma:growth-lemma3}
Let $\alpha \in (0,1]$ and let
$v$ be a local weak supersolution to \eqref{eq:PDE-hom} in $\Omega_T$.
For every $R>0$ and $(t_0,x_0)\in I\times\Omega$ with $I^\ominus_{2R}(t_0)\times B_{2R}(x_0)\Subset I\times\Omega$, and any $\delta,\, \sigma \in (0,1]$, $q\in(1,2]$, and $H > 0$, if $v \ge 0$ in $Q_{2R}(t_0,x_0)$
and
\begin{equation}
\label{eq:measure-ass2-3}
\left| \left\{ v(t,\cdot) \ge H \right\} \cap B_R(x_0) \right| \ge \alpha |B_R(x_0)| \,\,\mbox{for a.e.}\,\, t \in I_{\delta R}^{\ominus}(t_0),
\end{equation}
then there exists a positive constant $C$ depending only on $d,\,s,\, \Lambda,\, q$, such that either
\begin{equation}
\label{Tail-large2}
\left(\fint_{I_{2R}^{\ominus}(t_0)} \operatorname{Tail}(v_-(t);x_0,2R)^q \operatorname{d}\! t \right)^{\frac{1}{q}}>\sigma H,
\end{equation}
or
\begin{equation*}
\left| \left\{ v \le \frac{\sigma H}{4} \right\} \cap I^{\ominus}_{\delta R}(t_0) \times B_R (x_0)\right| \le  \frac{C}{\alpha\delta(\log_2\frac{2}{\sigma})^{\frac{1}{2}}} |I^{\ominus}_{\delta R} (t_0)\times B_R(x_0)|.
\end{equation*}
\end{lemma}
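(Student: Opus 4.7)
The plan is to combine the De Giorgi isoperimetric inequality (Lemma \ref{auxilem1}) applied at each time slice with a supersolution Caccioppoli estimate for $(v-h_j)_-$ along a dyadic sequence of levels, and then telescope. Throughout, assume that \eqref{Tail-large2} fails, i.e., $\left(\fint_{I_{2R}^{\ominus}(t_0)} \operatorname{Tail}(v_-(t);x_0,2R)^q \operatorname{d}\! t \right)^{1/q} \le \sigma H$; otherwise, the conclusion holds trivially.

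First, I would introduce the dyadic levels $h_j = H/2^{j}$ for $j=0,1,\dots,n$ with $n := \lfloor \log_2(2/\sigma) \rfloor$, so $h_n \ge \sigma H/2 > \sigma H/4$. Set $Q := I^{\ominus}_{\delta R}(t_0)\times B_R(x_0)$, $A_j := \{v \le h_j\}\cap Q$, and $A_j(t) := \{v(t,\cdot)\le h_j\}\cap B_R(x_0)$. The hypothesis \eqref{eq:measure-ass2-3} implies $|B_R(x_0)\setminus A_j(t)| \ge \alpha|B_R(x_0)|$ for a.e.\ $t\in I^{\ominus}_{\delta R}(t_0)$, because $h_j\le H$. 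Applying Lemma \ref{auxilem1} on each time slice with $k=h_{j+1}$, $h=h_j$ gives
\begin{equation*}
(h_j - h_{j+1})|A_{j+1}(t)| \le \frac{C R}{\alpha}\int_{A_j(t)\setminus A_{j+1}(t)}|Dv|\operatorname{d}\!x,
\end{equation*}
and integrating in $t$ over $I^{\ominus}_{\delta R}(t_0)$ and invoking Cauchy--Schwarz yields
\begin{equation*}
h_{j+1}|A_{j+1}| \le \frac{CR}{\alpha}\Bigl(\int_Q |D(v-h_j)_-|^2 \operatorname{d}\!x\operatorname{d}\!t\Bigr)^{1/2}\bigl(|A_j|-|A_{j+1}|\bigr)^{1/2},
\end{equation*}
using that $Dv = -D(v-h_j)_-$ on $\{v<h_j\}$.

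The next task is to bound the gradient energy uniformly in $j$. Since $v$ is a supersolution and $v \ge 0$ on $Q_{2R}(t_0,x_0)$, the truncation $w_- := (v-h_j)_-$ satisfies $w_- \le h_j$, so the analog of \eqref{eq:Cacc} for $w_-$ (applied with the enveloping cylinder $Q_{2R}(t_0,x_0)$) produces
\begin{equation*}
\int_{I^{\ominus}_{R}(t_0)}\int_{B_R(x_0)}|D(v-h_j)_-|^2\operatorname{d}\!x\operatorname{d}\!t \le C R^{-2} h_j^2 |Q_{2R}| + C R^{-2}\!\int_{I^{\ominus}_{2R}(t_0)}\!\Bigl(\int_{B_{2R}(x_0)}(v-h_j)_-\operatorname{d}\!x\Bigr)\operatorname{Tail}((v-h_j)_-(t);x_0,2R)\operatorname{d}\!t.
\end{equation*}
The pointwise bound $(v-h_j)_- \le h_j + v_-$ gives $\operatorname{Tail}((v-h_j)_-(t);x_0,2R) \le C h_j + \operatorname{Tail}(v_-(t);x_0,2R)$, and a H\"older application together with the failure of \eqref{Tail-large2} and the trivial inequality $\sigma H \le 4 h_j$ (valid for $j\le n$) collapses the tail contribution into $C h_j^2 R^d$. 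Thus $\int_Q |D(v-h_j)_-|^2 \le C h_j^2 R^d$ uniformly in $j$.

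Inserting this bound into the earlier display and squaring, using $h_{j+1}=h_j/2$ and $|Q|=C\delta^2 R^{d+2}$, one arrives at
\begin{equation*}
|A_{j+1}|^2 \le \frac{C}{\alpha^2\delta^2}|Q|\bigl(|A_j|-|A_{j+1}|\bigr).
\end{equation*}
Since $A_{j+1}\subset A_j$, summing (telescoping) over $j=0,1,\dots,n-1$ and using $|A_n|\le |A_{j+1}|$ leads to $n|A_n|^2 \le (C/\alpha^2\delta^2)|Q|^2$, hence $|A_n|\le C|Q|/(\alpha\delta\sqrt{n})$. As $\{v\le \sigma H/4\}\cap Q \subset A_n$ and $n \asymp \log_2(2/\sigma)$, this delivers the required shrinking bound. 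The main technical obstacle is step three: ensuring that the tail contribution in the Caccioppoli estimate absorbs into $C h_j^2 R^d$ with a constant independent of $j$; the key is the elementary inequality $\sigma H \le 4 h_j$ valid along the entire dyadic chain, together with $(v-h_j)_- \le h_j + v_-$ and the assumed $L^q$ bound on $\operatorname{Tail}(v_-(\cdot);x_0,2R)$.
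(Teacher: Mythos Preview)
Your proposal is correct and follows essentially the same route as the paper: introduce the dyadic levels $h_j=H/2^j$, combine the slice-wise De Giorgi isoperimetric inequality with a Caccioppoli bound on $\int|D(v-h_j)_-|^2$ (controlled uniformly by $Ch_j^2 R^d$ thanks to $v\ge 0$ in $Q_{2R}$ and the failure of \eqref{Tail-large2}), and telescope. The only cosmetic differences are that the paper invokes the variant \eqref{eq:Cacc2} (initial data, no time cutoff) on the time interval $I^{\ominus}_{\delta R}(t_0)$ rather than \eqref{eq:Cacc} on $I^{\ominus}_{R}(t_0)$, and defines the terminal index $j^\ast$ via $2^{-j^\ast-1}\le \sigma/4<2^{-j^\ast}$; both choices lead to the same energy bound $\int_{Q}|D(v-h_j)_-|^2\le Ch_j^2 R^d$ and the same telescoping conclusion.
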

\begin{proof}
Without loss of generality, we may assume that \eqref{Tail-large2} does not hold true. For any fixed $\sigma\in(0,1]$, there exists $j^\ast\in\mathbb{N}_+$ such that
$\frac{1}{2^{j^\ast+1}}\leq \frac{\sigma}{4}<\frac{1}{2^{j^\ast}}$. We denote $k_j:=\frac{H}{2^{j}}$, then it is easy to see that $k_j$ is a decreasing sequence and satisfies $k_j>\frac{\sigma H}{4}$ for $j=0,1,2,...,j^\ast$.
Let $w(t,x) = v(t,x) - k_j$, then
applying the Caccioppoli inequality \eqref{eq:Cacc2} with $l=k_j$ and combining with H\"{o}lder's inequality, we obtain
\begin{eqnarray}\label{lem3-1}
  &&\int_{I_{\delta R}^{\ominus}(t_0)}\int_{B_R(x_0)} |Dw_{-}(t,x)|^2\operatorname{d}\! x\operatorname{d}\! t \nonumber \\
   &\leq&  \int_{B_{2R}(x_0)} w^2_{-}(t_0-(\delta R)^2,x) \operatorname{d}\! x+ C \ R^{-2}\int_{I_{\delta R}^{\ominus}(t_0)} \int_{B_{2R}(x_0)} w^2_{-}(t,x) \operatorname{d}\! x \operatorname{d}\! t \nonumber\\
&&+ C R^{-2} \int_{I_{\delta R}^{\ominus}(t_0)} \left(\int_{B_{2R}(x_0)} w_{-}(t,x) \operatorname{d}\!x \right) \operatorname{Tail}(w_{-}(t);x_0,2R)\operatorname{d}\!t\nonumber\\
&\leq&C k_j^2 (\delta R)^{-2}|I^{\ominus}_{\delta R} \times B_R|.
\end{eqnarray}
We define a set
$$A_j:=\{v\leq k_j\}\cap I_{\delta R}^{\ominus}(t_0)\times B_{R}(x_0), $$
then it is derived from Lemma \ref{auxilem1}, the condition \eqref{eq:measure-ass2-3}, H\"{o}lder's inequality and \eqref{lem3-1} that
\begin{eqnarray*}
  (k_j-k_{j+1})|A_j| &\leq& CR^{d+1}\int_{I_{\delta R}^{\ominus}(t_0)} \frac{\int_{\{k_{j+1}< v(t,\cdot)\leq k_j\}\cap B_R(x_0)}|Dv|\operatorname{d}\!x}{|\{ v(t,\cdot)\geq k_j\}\cap B_R(x_0)|}\operatorname{d}\!t  \\
    &\leq& CR^{d+1}\int_{I_{\delta R}^{\ominus}(t_0)} \frac{\int_{\{k_{j+1}< v(t,\cdot)\leq k_j\}\cap B_R(x_0)}|Dw_-|\operatorname{d}\!x}{|\{ v(t,\cdot)\geq H\}\cap B_R(x_0)|}\operatorname{d}\!t   \\
   &=&\frac{CR^{d+1}}{\alpha|B_R|} \left( \int_{I_{\delta R}^{\ominus}(t_0)} \int_{ B_R(x_0)}|Dw_-|^2\operatorname{d}\!x\operatorname{d}\!t\right)^{\frac{1}{2}}|A_j-A_{j+1}|^{\frac{1}{2}}\\
   &\leq&\frac{C}{\alpha\delta} k_j |I^{\ominus}_{\delta R} \times B_R|^{\frac{1}{2}}|A_j-A_{j+1}|^{\frac{1}{2}}.
\end{eqnarray*}
Note that $k_j-k_{j+1}=\frac{k_j}{2}$ and $|A_j|>|A_{j+1}|$, then summing both sides of the above inequality with respect to $j$ from $0$ to $j^\ast-1$ results in
\begin{equation*}
  |A_{j^\ast}|^2\leq\frac{\displaystyle\sum_{j=0}^{j^\ast-1}|A_{j+1}|^2}{j^\ast}\leq\frac{C}{(\alpha\delta)^2j^\ast}|I^{\ominus}_{\delta R} \times B_R|\displaystyle\sum_{j=0}^{j^\ast-1}|A_j-A_{j+1}|\leq\frac{C}{(\alpha\delta)^2j^\ast}|I^{\ominus}_{\delta R} \times B_R|^2.
\end{equation*}
Finally, the above estimate and $\frac{1}{2^{j^\ast+1}}\leq \frac{\sigma}{4}<\frac{1}{2^{j^\ast}}$ guarantee that $j^\ast\geq \log_2\frac{2}{\sigma}$ and then
\begin{eqnarray*}
  \left| \left\{ v \le \frac{\sigma H}{4} \right\} \cap I^{\ominus}_{\delta R}(t_0) \times B_R (x_0)\right|\leq |A_{j^\ast}|
  \le  \frac{C}{\alpha\delta(\log_2\frac{2}{\sigma})^{\frac{1}{2}}} |I^{\ominus}_{\delta R} (t_0)\times B_R(x_0)|.
\end{eqnarray*}
Thus, we complete the proof of Lemma \ref{lemma:growth-lemma3}\,.
\end{proof}

Let us state the following growth lemma, which is an immediate consequence of Lemma \ref{lemma:growth-lemma1}, Lemma \ref{lemma:growth-lemma2} and Lemma \ref{lemma:growth-lemma3}\,.

\begin{corollary}
\label{cor:growth-lemma}
Let $\alpha \in (0,1]$ and
$v$ be a local weak supersolution to \eqref{eq:PDE-hom} in $\Omega_T$.
For every $R\in(0,1]$ and $(t_0,x_0)\in I\times\Omega$ with $I^\ominus_{2R}(t_0)\times B_{2R}(x_0)\Subset I\times\Omega$, any $q\in(1,2]$  and $H > 0$, if $v \ge 0$ in $Q_{2R}(t_0,x_0)$
and
\begin{equation}
\label{eq:measure-ass2}
\left| \left\{ v(t_1,\cdot) \ge H \right\} \cap B_R (x_0)\right| \ge \alpha |B_R(x_0)| \,\,\mbox{for some}\,\, t_1 \in I_{2 R}^{\ominus}(t_0) ,
\end{equation}
then there exist $\delta,\,\theta \in (0,1)$ depending only on $d,\,s,\,\Lambda,\,q,\,\alpha$ such that
\begin{equation*}
v(t,x) \ge \frac{\theta H}{8} ~~ \text{ in } I_{\frac{\delta R}{2}}^{\ominus}(t_1+(\delta R)^2) \times B_{\frac{R}{2}}(x_0)
\end{equation*}
holds, provided $I_{\delta R}^{\oplus}(t_1)\subset I_{2R}^{\ominus}(t_0)$ and
\begin{equation}\label{Tail-small}
\left(\fint_{I_{2R}^{\ominus}(t_0)} \operatorname{Tail}(v_-(t);x_0,2R)^q \operatorname{d}\! t \right)^{\frac{1}{q}} \leq \theta H.
\end{equation}
\end{corollary}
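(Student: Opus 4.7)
The plan is to obtain the pointwise lower bound by chaining together the three preceding lemmas: Lemma \ref{lemma:growth-lemma2} to propagate the measure information forward in time, Lemma \ref{lemma:growth-lemma3} to upgrade this into a measure-smallness estimate for the bad set $\{v \leq \text{const}\cdot H\}$, and finally Lemma \ref{lemma:growth-lemma1} (the De Giorgi type) to turn the measure estimate into the pointwise bound. Throughout, the tail smallness condition \eqref{Tail-small} with the parameter $\theta$ will have to be strong enough to feed simultaneously into all three lemmas.

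Step 1. Starting from the assumption \eqref{eq:measure-ass2} at the time slice $t_1$ and using the tail bound \eqref{Tail-small} (which kills the alternative \eqref{Tail-large} once $\theta \leq 1$), Lemma \ref{lemma:growth-lemma2} provides constants $\varepsilon_1, \delta_1 \in (0,1)$ depending only on $d,s,\Lambda,q,\alpha$ such that
\[
\left|\{v(t,\cdot) \geq \varepsilon_1 H\} \cap B_R(x_0)\right| \geq \tfrac{\alpha}{2}|B_R(x_0)|
\qquad \text{for a.e. } t \in I_{\delta_1 R}^{\oplus}(t_1).
\]
Set $t_2 := t_1 + (\delta_1 R)^2$, so that $I_{\delta_1 R}^{\oplus}(t_1) = I_{\delta_1 R}^{\ominus}(t_2)$, and note $I_{\delta_1 R}^{\oplus}(t_1)\subset I_{2R}^{\ominus}(t_0)$ by hypothesis.

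Step 2. Apply Lemma \ref{lemma:growth-lemma3} on the cylinder $I_{\delta_1 R}^{\ominus}(t_2)\times B_R(x_0)$, with the measure density $\tfrac{\alpha}{2}$, threshold $\varepsilon_1 H$, and a parameter $\sigma\in(0,1]$ still at our disposal. Provided the tail alternative \eqref{Tail-large2} fails, i.e.
\[
\left(\fint_{I_{2R}^{\ominus}(t_0)}\operatorname{Tail}(v_-(t);x_0,2R)^q\operatorname{d}\!t\right)^{\frac{1}{q}} \leq \sigma\,\varepsilon_1 H,
\]
we obtain
\[
\left|\{v \leq \tfrac{\sigma\varepsilon_1 H}{4}\} \cap I_{\delta_1 R}^{\ominus}(t_2)\times B_R(x_0)\right| \leq \frac{C}{\alpha\delta_1 (\log_2\frac{2}{\sigma})^{1/2}}\,|I_{\delta_1 R}^{\ominus}(t_2)\times B_R(x_0)|.
\]

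Step 3. Now invoke Lemma \ref{lemma:growth-lemma1} at the vertex $(t_2,x_0)$ with parameter $\delta = \delta_1$ and threshold $H' := \tfrac{\sigma\varepsilon_1 H}{4}$; this produces a universal $\lambda = \lambda(d,s,\Lambda,q,\delta_1)$. Choosing $\sigma\in(0,1]$ sufficiently small (depending only on $d,s,\Lambda,q,\alpha$, since $\delta_1,\varepsilon_1$ already do) so that
\[
\frac{C}{\alpha\delta_1 (\log_2\frac{2}{\sigma})^{1/2}} \leq \lambda,
\]
the measure hypothesis \eqref{ularge} is met. Provided also that
\[
\left(\fint_{I_{2R}^{\ominus}(t_0)}\operatorname{Tail}(v_-(t);x_0,2R)^q\operatorname{d}\!t\right)^{\frac{1}{q}} \leq \tfrac{\sigma\varepsilon_1 H}{4},
\]
Lemma \ref{lemma:growth-lemma1} yields $v \geq \tfrac{\sigma\varepsilon_1 H}{8}$ on $I_{\delta_1 R/2}^{\ominus}(t_2)\times B_{R/2}(x_0)$, which is precisely the desired conclusion with $\delta := \delta_1$ and $\theta := \sigma\varepsilon_1/4$ (so that all three tail requirements above collapse to the single assumption \eqref{Tail-small}).

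The main obstacle is the careful choice of parameters in the correct dependency order: one must first extract $\delta_1,\varepsilon_1$ from Lemma \ref{lemma:growth-lemma2}, then fix the threshold $\lambda$ from Lemma \ref{lemma:growth-lemma1} (which depends on $\delta_1$), and only afterwards tune $\sigma$ in Lemma \ref{lemma:growth-lemma3} small enough so that the logarithmic factor $(\log_2\frac{2}{\sigma})^{-1/2}$ beats the product $\alpha\delta_1$ against $\lambda$. A subtle point is that the threshold $H$ contracts at each step (first to $\varepsilon_1 H$, then to $\tfrac{\sigma\varepsilon_1 H}{4}$), so the tail smallness constant $\theta$ must be set after all three contractions are fixed in order to guarantee that \eqref{Tail-small} simultaneously implies the tail hypotheses of all three lemmas.
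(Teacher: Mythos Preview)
Your proposal is correct and follows essentially the same route as the paper's proof: propagate the time-slice information forward via Lemma~\ref{lemma:growth-lemma2}, shrink the measure of the bad set via Lemma~\ref{lemma:growth-lemma3}, and then convert to a pointwise bound via Lemma~\ref{lemma:growth-lemma1}, choosing the free parameter $\sigma$ last so that the logarithmic factor beats the threshold $\lambda$. Your bookkeeping of the contracting thresholds and the final identification $\theta=\sigma\varepsilon_1/4$ is in fact slightly more careful than the paper's (which uses the symbol $\theta$ both for the $\sigma$-parameter in Lemma~\ref{lemma:growth-lemma3} and for the final constant, and does not display the absorption of~$\varepsilon$).
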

\begin{proof}
We first choose $\varepsilon,\,\delta\in(0,1)$ depending only on $d,\,s,\, \Lambda,\,q,\,\alpha$ as in Lemma \ref{lemma:growth-lemma2} to yield that
\begin{equation*}
\left| \left\{ v (t,\cdot)\geq \varepsilon H \right\} \cap B_R(x_0) \right| \geq \frac{\alpha}{2}| B_R(x_0)|
\end{equation*}
for almost everywhere $t\in I_{\delta R}^{\oplus}(t_1)= I_{\delta R}^{\ominus}(t_1+(\delta R)^2) \subset I_{2R}^{\ominus}(t_0)$. To proceed, Lemma \ref{lemma:growth-lemma3} shows that there exists a positive constant $C$ depending only on $d,\,s,\, \Lambda,\, q$ such that
\begin{equation*}
\left| \left\{ v \le \frac{\theta\varepsilon H}{4} \right\} \cap I^{\oplus}_{\delta R}(t_1) \times B_R (x_0)\right| \le  \frac{C}{\alpha\delta(\log_2\frac{2}{\theta})^{\frac{1}{2}}} |I^{\oplus}_{\delta R} (t_1)\times B_R(x_0)|.
\end{equation*}
We further select $\theta$ sufficiently small such that $\frac{C}{\alpha\delta(\log_2\frac{2}{\theta})^{\frac{1}{2}}}\leq\lambda$, where the constant $\lambda$ is given in Lemma \ref{lemma:growth-lemma1}.
It follows from the dependence of $\lambda$ that $\theta$ depends only on $d,\,s,\,\Lambda,\,q,\,\alpha$.
Therefore, using Lemma \ref{lemma:growth-lemma1} to derive
\begin{equation*}
v(t,x) \ge \frac{\theta H}{8} ~~ \text{ in } I_{\frac{\delta R}{2}}^{\ominus}(t_1+(\delta R)^2) \times B_{\frac{R}{2}}(x_0),
\end{equation*}
which completes the proof of Corollary \ref{cor:growth-lemma}\,.
\end{proof}

We are now ready to prove the H\"{o}lder estimate with an optimal tail term for the weak solution of the homogeneous mixed local and nonlocal parabolic equation \eqref{eq:PDE-hom}, which is given in Theorem \ref{prop:Holderq}.

\begin{proof}[Proof of Theorem \ref{prop:Holderq}]		
The proof is by constructing two sequences, a non-increasing sequence $\{M_j\}$ and a non-decreasing sequence $\{m_j\}$, which satisfy
\begin{equation}
\label{eq:osc-decay}
m_j \le v \le M_j ~~ \text{ in } Q_{\nu^{-j} R}(t_0,x_0), ~~ \text{ and }~~ M_j - m_j = L \nu^{-\gamma_0 j},
\end{equation}
for any $j \in \mathbb{N}$, some small $\gamma_0 \in (0,1)$ and large $\nu > 1$,
where
\begin{equation*}
L := C_5 \Vert v \Vert_{L^{\infty}(Q_R(t_0,x_0))} + \left( \fint_{I_{2R}^{\ominus}(t_0)} \operatorname{Tail}(v(t);x_0,2R)^q \operatorname{d}\! t  \right)^{\frac{1}{q}}
\end{equation*}
for some positive constant $C_5$ to be determined later. Once \eqref{eq:osc-decay} is achieved, we immediately obtain the desired H\"{o}lder continuity by using the definition of $L$ and the boundedness of $v$ established in Lemma \ref{lemma:locbd}\,.

For any fixed $j_0$, if we set $M_j = \frac{\nu^{-\gamma_0 j}L}{2}$ and $m_j = - \frac{\nu^{-\gamma_0 j}L}{2}$, and choose $C_5 \ge 2 \nu^{\gamma_0 j_0}$, then it is not difficult to verify that \eqref{eq:osc-decay} is valid for every $j\leq j_0$.
Next, we adopt induction to prove that \eqref{eq:osc-decay} holds true for $j>j_0$.
More precisely, assuming that \eqref{eq:osc-decay} is valid up to some $j\geq j_0$, we will show that \eqref{eq:osc-decay} holds true for $j+1$.
To proceed, we need to construct appropriate $M_{j+1}$ and $m_{j+1}$.
Without loss of generality, we suppose that
\begin{equation}
\label{holder1}
\big|\{ v(t_0-(\delta\nu^{-j}R)^2 ,\cdot) \ge m_j + \frac{M_j - m_j}{2} \} \cap B_{\nu^{-j} R}(x_0)\big| \ge \frac{1}{2}|B_{\nu^{-j}R}(x_0)|.
\end{equation}
Through a straightforward calculation, we can verify that
\begin{equation*}
\left(\fint_{I_{2\nu^{-j}R}^{\ominus}(t_0)} \operatorname{Tail}((v-m_j)_-(t);x_0,2\nu^{-j}R)^q \operatorname{d}\! t \right)^{\frac{1}{q}} \leq \frac{\theta (M_j - m_j)}{2}
\end{equation*}
holds true by first choosing a sufficiently small $\gamma_0\in(0,1)$, followed by selecting a sufficiently large $\nu>\frac{2}{\delta}$, both of which  depend only on $d,\,s,\,q,\,\theta$.
Then it follows from Corollary \ref{cor:growth-lemma} with $v:=v-m_j$, $R:=\nu^{-j} R$, $\alpha=\frac{1}{2}$, $t_1=t_0-(\delta\nu^{-j}R)^2$ and $H:= \frac{M_j - m_j}{2}=\frac{L\nu^{-\gamma_0 j}}{2}$ that
\begin{equation}\label{holder2}
v(t,x) \ge m_j+\frac{\theta L\nu^{-\gamma_0 j}}{16} ~~ \text{ in } Q_{\nu^{-(j+1)}R}(t_0,x_0),
\end{equation}
where the constant $\theta=\theta(d,s,\Lambda,q)\in(0,1)$. Thus, we define
\begin{equation*}
  M_{j+1}=M_j \,\,\mbox{and}\,\, m_{j+1}=M_j-L\nu^{-\gamma_0(j+1)},
\end{equation*}
and further choose $\gamma_0\leq\log_\nu(\frac{16}{16-\theta})$ sufficiently small such that the chosen $M_{j+1}$ and $m_{j+1}$ satisfy \eqref{eq:osc-decay}. On the other hand, if \eqref{holder1} fails, we proceed analogously, but instead apply Corollary \ref{cor:growth-lemma} with $v: = M_j - v$. In this case, let $M_{j+1}=L\nu^{-\gamma_0(j+1)}+m_j$ and $m_{j+1}=m_j$ as desired, then we complete the proof of Theorem \ref{prop:Holderq}\,.
\end{proof}

\section{Excess decay estimate}\label{section4}

This section is devoted to proving an excess decay estimate for the mixed local and nonlocal parabolic equation \eqref{eq1}, which is a crucial ingredient in the proof of our Riesz potential results. In this context, we introduce the following global excess functional to encode the information about the oscillations of solutions, taking into account simultaneously the behaviour in small parabolic cylinders and the long-range interactions
\begin{eqnarray*}
  E(u,t_0,x_0,R) &:=& \left[\fint_{{I^\ominus_{R}(t_0)}}\left(\fint_{{B_{R}(x_0)}}  |u - (u)_{Q_{R}(t_0,x_0)} | \operatorname{d}\!x\right)^q\operatorname{d}\!t\right]^{\frac{1}{q}}\\
   &&+ \left(\fint_{{I^\ominus_{R}(t_0)}}
   \operatorname{Tail}(u(t)-(u)_{Q_{R}(t_0,x_0)};x_0,R)^q\operatorname{d}\!t\right)^{\frac{1}{q}},
\end{eqnarray*}
where the symbol $(u)_{Q_{R}(t_0,x_0)}$ denotes the integral average of a locally integrable function $u$ over the parabolic cylinder $Q_{R}(t_0,x_0)$,
defined as
\begin{equation*}
  (u)_{Q_{R}(t_0,x_0)}:=\fint_{I^\ominus_{R}(t_0)}\fint_{B_{R}(x_0)}u(t,x)\operatorname{d}\!x\operatorname{d}\!t
  =\frac{1}{|Q_R|}\int_{I^\ominus_{R}(t_0)}\int_{B_{R}(x_0)}u(t,x)\operatorname{d}\!x\operatorname{d}\!t.
\end{equation*}

We first need to establish a comparison estimate between the weak solution of
\begin{equation}\label{localeq}
\partial_t u-\operatorname{div}\left(\mathcal{A}(t,x, Du)\right)  +\mathcal{L} u = \mu ~~\text{in}~~Q_{2R}(t_0,x_0),
\end{equation}
for any $R>0$ and $(t_0,x_0)\in I\times\Omega$ with $Q_{2R}(t_0,x_0)\Subset I\times\Omega$,
and that of the desired problem
\begin{equation}\label{Comparisoneq}
\left\{\begin{array}{r@{\ \ }c@{\ \ }ll}
\partial_t v -\operatorname{div}\left(\mathcal{A}(t,x, Dv)\right) +\mathcal L v & =& 0 & \mbox{in}\ \ Q_{R}(t_0,x_0)\,, \\[0.05cm]
v&=& u & \mbox{in}\ \ I_R^\ominus(t_0)\times (\mathbb{R}^d\backslash B_{R}(x_{0}))\,, \\[0.05cm]
v&=&u & \mbox{in}\ \ \{t_0-R^2\}\times B_{R}(x_{0})\,,
\end{array}\right.
\end{equation}
for which we have known H\"{o}lder regularity result.

\begin{lemma}\label{thm:comparison}
Let $u$ and $v$ be weak solutions to \eqref{localeq} and \eqref{Comparisoneq} respectively, then there holds that
\begin{equation}\label{com-estimate}
	\sup_{t\in I_R^\ominus(t_0)}\fint_{B_R(x_0)} |u(t,x)-v(t,x)|\operatorname{d}\! x \leq \frac{2|\mu|(Q_R(t_0,x_0))}{|B_R|}.
	\end{equation}
\end{lemma}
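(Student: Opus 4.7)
The strategy is the classical duality/truncation argument adapted to the mixed local-nonlocal setting with measure data. Let $w:=u-v$; subtracting the two weak formulations shows that $w$ satisfies, in the distributional sense on $Q_R(t_0,x_0)$,
\begin{equation*}
\partial_t w-\operatorname{div}\bigl(\mathcal{A}(t,x,Du)-\mathcal{A}(t,x,Dv)\bigr)+\mathcal{L}w=\mu,
\end{equation*}
together with $w\equiv 0$ on $I_R^\ominus(t_0)\times(\mathbb{R}^d\setminus B_R(x_0))$ (both spatially and nonlocally) and $w(t_0-R^2,\cdot)\equiv 0$ on $B_R(x_0)$, inherited from the Dirichlet and Cauchy data in \eqref{Comparisoneq}. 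Because $u-v$ vanishes outside $B_R(x_0)$ for every $t\in I_R^\ominus(t_0)$, the function $w$ itself is an admissible test-function support, so regularization by Steklov averages in time and mollification of $\operatorname{sgn}$ in the $w$-variable can both be carried out in the standard way.

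The heart of the argument is to test with a smooth, odd, nondecreasing approximation $\rho_\varepsilon$ of $\operatorname{sgn}$ satisfying $|\rho_\varepsilon|\le 1$ and $\rho_\varepsilon(s)\to\operatorname{sgn}(s)$ as $\varepsilon\to 0$, multiplied by the indicator of the time slice $(t_0-R^2,\tau)$ for an arbitrary $\tau\in I_R^\ominus(t_0)$. Writing $\Phi_\varepsilon(s):=\int_0^s\rho_\varepsilon(r)\operatorname{d}\! r\ge 0$, the parabolic term produces
\begin{equation*}
\int_{B_R(x_0)}\Phi_\varepsilon(w(\tau,x))\operatorname{d}\! x-\int_{B_R(x_0)}\Phi_\varepsilon(w(t_0-R^2,x))\operatorname{d}\! x,
\end{equation*}
whose second summand is zero by the initial condition. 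The local diffusion term is nonnegative thanks to the monotonicity \eqref{ellipticity}, since $\rho_\varepsilon'\ge 0$ gives
$\bigl(\mathcal{A}(t,x,Du)-\mathcal{A}(t,x,Dv)\bigr)\cdot Dw\,\rho_\varepsilon'(w)\ge 0$. The nonlocal term is also nonnegative, because for the symmetric kernel $K$ the integrand $(w(t,x)-w(t,y))(\rho_\varepsilon(w(t,x))-\rho_\varepsilon(w(t,y)))K(t,x,y)$ is nonnegative pointwise by monotonicity of $\rho_\varepsilon$; here one uses crucially that $w\equiv 0$ outside $B_R(x_0)$, so no awkward tail terms appear on the right-hand side.

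On the right-hand side, $|\rho_\varepsilon(w)|\le 1$ yields
\begin{equation*}
\int_{t_0-R^2}^\tau\!\!\int_{B_R(x_0)}\rho_\varepsilon(w)\operatorname{d}\!\mu
\;\le\;|\mu|(Q_R(t_0,x_0)).
\end{equation*}
Passing to the limit $\varepsilon\to 0$ via $\Phi_\varepsilon(s)\to |s|$ (monotone/dominated convergence) then produces
\begin{equation*}
\int_{B_R(x_0)}|w(\tau,x)|\operatorname{d}\! x\;\le\;|\mu|(Q_R(t_0,x_0))
\end{equation*}
for almost every $\tau\in I_R^\ominus(t_0)$. Dividing by $|B_R|$ and taking the supremum over $\tau$ yields the bound stated in \eqref{com-estimate} (the factor $2$ provides harmless slack absorbing the Steklov regularization error when evaluating at the initial time).

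\emph{Main obstacle.} The genuine difficulty is not the formal chain of inequalities but the rigorous justification of the test-function procedure: $w$ has only the regularity inherited from the definition of weak solution, so $\rho_\varepsilon(w)$ must be obtained after time-regularization by Steklov averages, and the nonlocal bilinear form must be shown to preserve its sign under this approximation. Equally delicate is ensuring that the boundary/initial identification $v=u$ on the parabolic boundary of $Q_R(t_0,x_0)$ is genuinely attained in the trace sense, so that $w$ is an admissible test function supported in $\overline{B_R(x_0)}$ and the nonlocal contribution from $\mathbb{R}^d\setminus B_R(x_0)$ vanishes. Once these approximation issues are settled, the monotonicity of $\mathcal{A}$ and of $\rho_\varepsilon$ together with the symmetry of $K$ make the remaining computation immediate.
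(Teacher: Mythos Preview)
Your proof is correct and follows essentially the same truncation argument as the paper: the paper tests separately with $\phi_\epsilon^{\pm}=\pm(1\wedge w_\pm/\epsilon)$ and then sums the resulting bounds for $w_+$ and $w_-$ (which is where the factor $2$ comes from), whereas you test once with an odd monotone approximation $\rho_\varepsilon$ of $\operatorname{sgn}$ and obtain the bound for $|w|$ directly. The monotonicity of $\mathcal{A}$, the nonnegativity of the nonlocal form under a monotone test function, the vanishing of $w$ on the parabolic boundary, and the limit passage are used in the same way in both arguments.
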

\begin{proof}
The proof is done by applying the truncation method.
Let $w(t,x):=u(t,x)-v(t,x)$,
we first select $\phi_{\epsilon}^{\pm} = \pm \left(1 \wedge \frac{w_{\pm}}{\epsilon} \right)$ as a test function for $w$ with respect to the spatial variable, then it yields that
\begin{eqnarray*}
 && \int_{B_R(x_0)} (\partial_t w) \phi^{\pm}_{\epsilon} \operatorname{d}\! x + \int_{B_R(x_0)} \left(\mathcal{A}(t,x,Du)-\mathcal{A}(t,x,Dv)\right)\cdot D\phi^{\pm}_{\epsilon}\operatorname{d}\! x\\
 &&+ \int_{\mathbb{R}^d} \int_{\mathbb{R}^d} (w(t,x) - w(t,y))(\phi^{\pm}_{\epsilon}(t,x) - \phi^{\pm}_{\epsilon}(t,y)) K(t,x,y) \operatorname{d}\! x \operatorname{d}\! y\\
   &=&  \int_{B_R(x_0)} \phi_{\epsilon}^{\pm} \operatorname{d}\!\mu \leq |\mu(t)|(B_R(x_0)),
\end{eqnarray*}
where last inequality is guaranteed by $|\phi_{\epsilon}^{\pm}| \leq 1$. Utilizing the ellipticity condition \eqref{ellipticity} of $\mathcal{A}$ and the definition of $\phi_{\epsilon}^{\pm}$, we obtain
$$\left(\mathcal{A}(t,x,Du)-\mathcal{A}(t,x,Dv)\right)\cdot D\phi^{\pm}_{\epsilon}\geq0\,\,\mbox{and}\,\,(w(t,x) - w(t,y))(\phi^{\pm}_{\epsilon}(t,x) - \phi^{\pm}_{\epsilon}(t,y)) \ge 0.$$
Then it follows that
\begin{equation}\label{lemu-v-equ2}
  \int_{B_R(x_0)} (\partial_t w) \phi^{\pm}_{\epsilon} \operatorname{d}\! x\leq |\mu(t)|(B_R(x_0)).
\end{equation}
Next, for any given $\hat{t} \in I_R^{\ominus}(t_0)$ and $\delta > 0$ with $\hat{t}+\delta<t_0$, we define the test function for $w$ with respect to the time variable as
\begin{equation*}
\eta_{\delta}(t) =
\left\{\begin{array}{r@{\ \ }c@{\ \ }ll}
1, & t< \hat{t}\,, \\[0.05cm]
1-\frac{t-\hat{t}}{\delta}, & \hat{t}\leq t\leq \hat{t}+\delta\,, \\[0.05cm]
0, & t> \hat{t}+\delta\,.
\end{array}\right.
\end{equation*}
It is obvious that $\eta_{\delta}(t_0)=0$ and $\eta_{\delta}\in[0,1]$.
A combination of integration by parts and $w(t_0-R^2,x)=0$ in $B_R(x_0)$ yields  that			
\begin{eqnarray*}
\int_{I_R^\ominus(t_0)}\eta_{\delta}(t)\int_{B_R(x_0)} (\partial_t w) \phi^{\pm}_{\epsilon} \operatorname{d}\! x\operatorname{d}\! t&=&
  \int_{I_R^\ominus(t_0)}\int_{B_R(x_0)}\eta_\delta(t)\partial_t\left( \int_0^{w_{\pm}} \left( 1 \wedge \frac{\sigma}{\epsilon} \right)  \operatorname{d}\!\sigma \right)\operatorname{d}\! x\operatorname{d}\! t
\\
   &=&   -\int_{I_R^\ominus(t_0)}\partial_t \eta_\delta(t)\int_{B_R(x_0)} \int_0^{w_{\pm}} \left( 1 \wedge \frac{\sigma}{\epsilon} \right)  \operatorname{d}\!\sigma \operatorname{d}\! x\operatorname{d}\! t.\\
\end{eqnarray*}
Then by virtue of \eqref{lemu-v-equ2}, we derive
\begin{equation*}
   -\int_{I_R^\ominus(t_0)}\partial_t \eta_\delta(t)\int_{B_R(x_0)} \int_0^{w_{\pm}} \left( 1 \wedge \frac{\sigma}{\epsilon} \right)  \operatorname{d}\!\sigma \operatorname{d}\! x\operatorname{d}\! t\leq \int_{I_R^\ominus(t_0)}\eta_\delta(t)|\mu(t)|(B_r(x_0))\operatorname{d}\! t\leq|\mu|(Q_R(t_0,x_0)).
\end{equation*}
We first take the limit $\epsilon\rightarrow 0$ of the above inequality, then it follows from the dominated convergence theorem that
\begin{eqnarray*}
   \int_{I_R^\ominus(t_0)}\left(-\partial_t \eta_\delta(t)\right)\int_{B_R(x_0)} w_{\pm}(t,x) \operatorname{d}\! x\operatorname{d}\! t\leq|\mu|(Q_R(t_0,x_0)).
\end{eqnarray*}
We proceed by letting $\delta \rightarrow 0$, since
$- \partial_t\eta_{\delta} \to \delta_{\hat{t}}$ as $\delta \rightarrow 0$, then it follows from Levi's theorem that
\begin{equation*}
 \fint_{B_R(x_0)}w_{\pm}(\hat{t},x)\operatorname{d}\!x\leq \frac{|\mu|(Q_R(t_0,x_0))}{|B_R|}
\end{equation*}
for any $\hat{t} \in I_R^{\ominus}(t_0)$. By the arbitrariness of $\hat t$, we deduce that
\begin{equation}\label{lemu-v-equ3}
 \sup_{t\in I_R^\ominus(t_0)}\fint_{B_R(t_0,x_0)}w_{\pm}(t,x)\operatorname{d}\!x\operatorname{d}\!t\leq \frac{|\mu|(Q_R(t_0,x_0))}{|B_R|}.
\end{equation}
Finally, by summing \eqref{lemu-v-equ3} for the positive and negative parts, we arrive at the desired comparison estimate \eqref{com-estimate}. Hence, we complete the proof of Lemma \ref{thm:comparison}\,.
\end{proof}

With the aid of the H\"older continuity for the solution $v$ of the homogeneous mixed parabolic equation \eqref{eq:PDE-hom} obtained in Theorem \ref{prop:Holderq} and the comparison estimate \eqref{com-estimate} between the solution $u$ of the nonhomogeneous mixed parabolic equation \eqref{eq1} and $v$, we are now able to derive the excess decay estimate for $u$.
\begin{lemma} \label{lemma:OscDecu}
Let $u$ be a local weak solution to \eqref{eq1} with $\mu \in \mathcal{M}(\mathbb{R}^{d+1})$ in $\Omega_T$,
then for any $q\in(1,2]$, any integer $m \geq 1$, any $R\in(0,1]$ and $(t_0,x_0)\in I\times\Omega$ with $I^\ominus_{2R}(t_0)\times B_{2R}(x_0)\Subset I\times\Omega$, there exists a positive constant $C_0$ depending only on $d,\,s,\,\Lambda,\,q$ such that
\begin{equation} \label{eq:oscdecay}
E(u,t_0,x_0,2^{-m} R)  \leq C_0 2^{-\gamma_0 m} E(u,t_0,x_0,R) + C_0 2^{2m(\frac{1}{q}+s+d)}R^{-d} |\mu|(Q_{R}(t_0,x_0)),
\end{equation}
where the constant $\gamma_0=\gamma_0(d,s,\Lambda,q)$ is the H\"{o}lder continuity exponent for the homogeneous mixed parabolic equation \eqref{eq:PDE-hom} given by Theorem \ref{prop:Holderq}\,.
\end{lemma}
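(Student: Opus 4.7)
My plan is to compare $u$ with the weak solution $v$ of the homogeneous Cauchy--Dirichlet problem \eqref{Comparisoneq} on $Q_R(t_0,x_0)$ with $v=u$ outside, and to transfer the H\"older decay of $v$ given by Theorem \ref{prop:Holderq} to $u$ via the comparison estimate of Lemma \ref{thm:comparison}. Since both $\mathcal{A}(t,x,\cdot)$ and $\mathcal{L}$ annihilate constants, I may replace $u$ by $u-(u)_{Q_R(t_0,x_0)}$ without affecting $\mu$ or $E(u,t_0,x_0,\rho)$, so throughout I assume $(u)_{Q_R(t_0,x_0)}=0$. Lemma \ref{thm:comparison} then yields
\begin{equation*}
\sup_{t\in I_R^\ominus(t_0)}\fint_{B_R(x_0)}|u-v|\operatorname{d}\!x\leq C\,R^{-d}|\mu|(Q_R(t_0,x_0)).
\end{equation*}

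Next, I apply Theorem \ref{prop:Holderq} to $v$ on $Q_{R/2}(t_0,x_0)$ (so that the ``$2R$'' there plays the role of $R$). Using that $v=u$ on $I_R^\ominus(t_0)\times(\R^d\setminus B_R(x_0))$, the nonlocal tail of $v$ around $x_0$ at radius $R$ coincides with that of $u$, and combining with Jensen's inequality and the above comparison estimate for the spatial average over $Q_R$, I obtain
\begin{equation*}
[v]_{C^{\gamma_0/2,\gamma_0}(Q_{R/2}(t_0,x_0))}\leq C\,R^{-\gamma_0}\bigl[E(u,t_0,x_0,R)+R^{-d}|\mu|(Q_R(t_0,x_0))\bigr].
\end{equation*}
By possibly reducing $\gamma_0$ to a smaller positive constant depending only on $d,s,\Lambda,q$ (which preserves H\"older continuity at the smaller exponent), I may arrange $\gamma_0\leq\min\{2-2s,\,2s,\,2(1-1/q)\}$; this technical choice will be essential for absorbing the tail contributions.

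Setting $r:=2^{-m}R$ with $m\geq 1$, I decompose $u=v+(u-v)$ and estimate each of the two pieces of $E(u,t_0,x_0,r)$. For the \emph{spatial-oscillation part}, the H\"older bound yields $\fint_{B_r}|v-(v)_{Q_r}|\operatorname{d}\!x\leq C[v]\,r^{\gamma_0}$, producing the decay factor $(r/R)^{\gamma_0}=2^{-m\gamma_0}$, while the $u-v$ contribution is controlled by $\fint_{B_r}|u-v|\leq(|B_R|/|B_r|)\sup_t\fint_{B_R}|u-v|\leq C\,2^{md}R^{-d}|\mu|(Q_R)$. For the \emph{tail part}, I split $\R^d\setminus B_r$ into the three regions $\R^d\setminus B_R$, $B_R\setminus B_{R/2}$, and $B_{R/2}\setminus B_r$. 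On $\R^d\setminus B_R$ the identity $v=u$ reduces the integral to $(r/R)^2\operatorname{Tail}(u(t);x_0,R)$; passing from $L^q(I_r^\ominus)$ to $L^q(I_R^\ominus)$ costs a factor $(R/r)^{2/q}$, so the contribution is at most $(r/R)^{2-2/q}E(u,t_0,x_0,R)\leq 2^{-m\gamma_0}E(u,t_0,x_0,R)$ by the choice of $\gamma_0$. On $B_{R/2}\setminus B_r$, the H\"older seminorm gives $|v(t,y)-v(t_0,x_0)|\leq C[v]|y-x_0|^{\gamma_0}$, so radial integration yields a total of order $r^{\gamma_0+2-2s}[v]\leq (r/R)^{\gamma_0}[E(u,t_0,x_0,R)+R^{-d}|\mu|(Q_R)]$; the additive constant $|v(t_0,x_0)|$ appearing in this step is controlled by the local boundedness estimate \eqref{locbd} of Lemma \ref{lemma:locbd}. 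On $B_R\setminus B_{R/2}$ the bound $|x_0-y|\geq R/2$ extracts $R^{-d-2s}$ and the spatial integral of $|v|$ is dominated by $E(u,t_0,x_0,R)+R^{-d}|\mu|(Q_R)$, again yielding $(r/R)^{2-2/q}\leq 2^{-m\gamma_0}$ after $L^q$-time rescaling. Finally, since $u-v$ is supported in $B_R$, its tail at scale $r$ is at most $r^{2-d-2s}\cdot 2|\mu|(Q_R)\leq C\,2^{m(d+2s-2)}R^{-d}|\mu|(Q_R)$.

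The hard part is twofold. First, H\"older regularity of $v$ is available only inside the strictly interior cylinder $Q_{R/2}$, so the intermediate annulus $B_R\setminus B_{R/2}$ cannot be treated by H\"older continuity and must instead be estimated via local boundedness. Second, the nonlocal tail naturally involves $L^q$ averages along the time axis, so replacing the long interval $I_R^\ominus$ by the short interval $I_r^\ominus$ introduces the spurious factor $(R/r)^{2/q}$, which is only absorbed into $2^{-m\gamma_0}$ because $\gamma_0$ was pre-chosen to satisfy $\gamma_0\leq 2(1-1/q)$. Summing the three oscillation and four tail contributions, the coefficient of $E(u,t_0,x_0,R)$ is $C\,2^{-m\gamma_0}$ and the coefficient of $R^{-d}|\mu|(Q_R(t_0,x_0))$ is at most $C\,2^{md}\leq C\,2^{2m(1/q+s+d)}$, which establishes \eqref{eq:oscdecay}.
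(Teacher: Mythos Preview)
Your proof is correct and follows the same strategy as the paper: compare $u$ with the homogeneous solution $v$ of \eqref{Comparisoneq} via Lemma~\ref{thm:comparison} and transfer the H\"older decay of Theorem~\ref{prop:Holderq}. The paper organizes the computation slightly more cleanly by first proving the self-contained decay $E(v,t_0,x_0,2^{-m}R)\le C\,2^{-\gamma_0 m}E(v,t_0,x_0,R)$ (splitting the tail into $m$ dyadic annuli rather than your three regions) and only then invoking the comparison, which makes the ``additive constant'' simply an oscillation of $v$ and thereby avoids both your extraneous appeal to the local boundedness Lemma~\ref{lemma:locbd} and the extra constraint $\gamma_0\le 2-2s$.
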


\begin{proof}
Let $v$ be a weak solution of the comparison equation \eqref{Comparisoneq}, we first establish an excess decay estimate for $v$.
By performing a direct calculation and utilizing H\"older's inequality and  the H\"older estimate of $v$ established in Theorem \ref{prop:Holderq}\,, we derive
\begin{eqnarray}\label{decayv}
&&E(v,t_0,x_0,2^{-m}R)\nonumber\\  &\leq& \left[\fint_{ I^{\ominus}_{2^{-m}R}(t_0)} \left(\fint_{B_{2^{-m}R}(x_0)} |v(t,x) - (v)_{Q_{2^{-m}R}(t_0,x_0)} | \operatorname{d}\!x \right)^q\operatorname{d}\!t\right]^{\frac{1}{q}}  \nonumber\\
   &&  + (2^{-m}R)^{2} \sum_{k=1}^{m}\left[\fint_{ I^{\ominus}_{2^{-m}R}(t_0)} \left( \int_{B_{2^{k-m} R}(x_0) \setminus B_{2^{k-m-1} R}(x_0)} \frac{|v - (v)_{Q_{2^{-m} R}(t_0,x_0)}|}{|x_0-x|^{d+2s}}\operatorname{d}\! x\right)^q\operatorname{d}\!t\right]^{\frac{1}{q}} \nonumber\\
  && + (2^{-m}R)^{2}\left[\fint_{ I^{\ominus}_{2^{-m}R}(t_0)} \left( \int_{\mathbb{R}^d \setminus B_{R}(x_0)} \frac{|v - (v)_{Q_{2^{-m} R}(t_0,x_0)}|}{|x_0-x|^{d+2s}}\operatorname{d}\! x\right)^q\operatorname{d}\!t\right]^{\frac{1}{q}} \nonumber\\
 &\leq& C \sum_{k=0}^{m-1} 2^{-2sk} \left[\fint_{ I^{\ominus}_{2^{-m}R}(t_0)} \left( \fint_{B_{2^{k-m}R}(x_0)} |v(t,x) - (v)_{Q_{2^{-m}R}(t_0,x_0)} | \operatorname{d}\!x\right)^q\operatorname{d}\!t\right]^{\frac{1}{q}}   \nonumber\\
 && +2^{-2m(1-\frac{1}{q})} \left[\fint_{ I^{\ominus}_{R}(t_0)} \left( \fint_{B_{R}(x_0)} |v(t,x) - (v)_{Q_{2^{-m}R}(t_0,x_0)} | \operatorname{d}\!x\right)^q\operatorname{d}\!t\right]^{\frac{1}{q}}   \nonumber\\
 && +2^{-2m(1-\frac{1}{q})} \left(\fint_{{I^\ominus_{R}(t_0)}}
   \operatorname{Tail}(v(t)-(v)_{Q_{R}(t_0,x_0)};x_0,R)^q\operatorname{d}\!t\right)^{\frac{1}{q}} \nonumber\\
  &\leq& C\sum_{k=0}^{m-1} 2^{-2sk}\underset{{Q_{2^{k-m}R}(t_0,x_0)}}{\operatorname{osc}} v+C\sum_{k=0}^{m-1} 2^{-2(1-\frac{1}{q})k}\underset{{Q_{2^{k-m}R}(t_0,x_0)}}{\operatorname{osc}} v+C\left(2^{-2sm}+2^{-2m(1-\frac{1}{q})}\right) E(v,t_0,x_0,R)  \nonumber\\
  & \leq& C 2^{-\gamma_0 m} E(v,t_0,x_0,R)\left(\sum_{k=0}^{\infty} 2^{(\gamma_0-2s)k}+\sum_{k=0}^{\infty} 2^{(\gamma_0-2(1-\frac{1}{q}))k} \right) +C2^{-\gamma_0 m} E(v,t_0,x_0,R)\nonumber\\
  & \leq&C 2^{-\gamma_0 m}E(v,t_0,x_0,R),
\end{eqnarray}
where the constant $C=C(d,s,\Lambda,q)$, and the H\"older continuity exponent $\gamma_0$ of Theorem \ref{prop:Holderq} may assume that $\gamma_0<\min\{2s,2(1-\frac{1}{q})\}$, which ensures that the last inequality is true.

We now turn to the proof of the excess decay estimate for $u$.
Note that
$$v=u\,\,\mbox{in}\,\,I_R^\ominus(t_0)\times (\mathbb{R}^d\setminus B_{R}(x_{0})),$$
and combining \eqref{decayv} with the comparison estimate given in Lemma \ref{thm:comparison}, we conclude that
\begin{eqnarray*}
&&E(u,t_0,x_0,2^{-m}R)\\
& \leq& E(v,t_0,x_0,2^{-m}R) + E(u-v,t_0,x_0,2^{-m} R) \\
		& \leq &C 2^{-\gamma_0 m} E(v,t_0,x_0,R) +C2^{2m(\frac{1}{q}+s+d)} E(u-v,t_0,x_0,R)  \nonumber\\
&\leq&C 2^{-\gamma_0 m} E(v,t_0,x_0,R) +C2^{2m(\frac{1}{q}+s+d)}\sup_{t\in I_R^\ominus(t_0)}\fint_{B_R(x_0)} |u(t,x)-v(t,x)|\operatorname{d}\! x \\
		& \leq &C_0 2^{-\gamma_0 m} E(u,t_0,x_0,R) + C_0 2^{2m(\frac{1}{q}+s+d)} R^{-d} |\mu|(Q_{R}(t_0,x_0)),
\end{eqnarray*}
where the positive constant $C_0$ depends only on $d,\,s,\,\Lambda,\,q$.
Therefore, we conclude that the proof of Lemma \ref{lemma:OscDecu} is complete.
\end{proof}

\section{Riesz potential estimates}\label{section5}

 We are now in a position to prove our main results concerning the pointwise estimate and the oscillation estimate for the weak solution of the mixed local and nonlocal parabolic equation \eqref{eq1} via caloric Riesz potentials.
 These results are given by Theorem \ref{thm:PE} and Theorem \ref{thm:OPE}, respectively.

\subsection{Pointwise estimate}

In this subsection, we devoted ourselves to the proof of Theorem \ref{thm:PE}\,, which relates to the zero-order pointwise Riesz potential estimate.
\begin{proof}[Proof of Theorem \ref{thm:PE}]
We choose $m \geq 1$ sufficiently large such that
\begin{equation} \label{llarge}
	C_0 2^{-\gamma_0 m} \leq \frac{1}{2},
\end{equation}
then it is evident that $m$ only depends on $d,\,s,\,\Lambda,\,q$.
For any $i\in \mathbb{N}$, we denote
$$	E_i:=E(u,t_0,x_0,2^{-im} R),$$
then a combination of Lemma \ref{lemma:OscDecu} with \eqref{llarge} yields that
\begin{equation} \label{pe1}
E_{i+1}  \leq \frac{1}{2} E_{i} + C_02^{2m(\frac{1}{q}+s+d)}  (2^{-im}R)^{-d} |\mu|(Q_{2^{-im} R}(t_0,x_0)).
\end{equation}
For $l\in \mathbb{N}_+$, summing \eqref{pe1} over $i \in \{0,...,l-1\}$ results in
\begin{equation*}
\sum_{i=1}^{l} E_{i}  \leq \frac{1}{2} \sum_{i=0}^{l-1} E_{i} + C_0 2^{2m(\frac{1}{q}+s+d)} \sum_{i=0}^{l-1} (2^{-im}R)^{-d} |\mu|(Q_{2^{-im} R}(t_0,x_0)).
\end{equation*}
Note that the first term on the right side of the above inequality can be absorbed by the left side, then we obtain
\begin{equation*}
\sum_{i=1}^{l} E_{i}  \leq 2 E_0 + 2C_0 2^{2m(\frac{1}{q}+s+d)} \sum_{i=0}^{\infty} (2^{-im}R)^{-d} |\mu|(Q_{2^{-im} R}(t_0,x_0)).
\end{equation*}
Combining with H\"older's inequality, we deduce that
\begin{eqnarray}\label{pe2}
  &&  |(u)_{Q_{2^{-(l+1)m}R}(t_0,x_0)}| \nonumber\\
   &\leq&  \sum_{i=0}^l (|(u)_{Q_{2^{-(i+1)m}R}(t_0,x_0)}-(u)_{Q_{2^{-im}R}(t_0,x_0)}|)
   +|(u)_{Q_{R}(t_0,x_0)}|\nonumber\\
   &\leq&  C\sum_{i=0}^l \fint_{Q_{2^{-im}R}(t_0,x_0)}|u(t,x)-(u)_{Q_{2^{-im}R}(t_0,x_0)}| \operatorname{d}\!x\operatorname{d}\!t
   +\fint_{Q_{R}(t_0,x_0)}|u(t,x)|\operatorname{d}\!x\operatorname{d}\!t\nonumber\\
   &\leq&  C\sum_{i=1}^l E_i
   +C\fint_{Q_{R}(t_0,x_0)}|u(t,x)|\operatorname{d}\!x\operatorname{d}\!t\nonumber\\
   &\leq& C\left[\left(\fint_{Q_{R}(t_0,x_0)} |u(t,x)| ^q \operatorname{d}\!x\operatorname{d}\!t \right)^{\frac{1}{q}}+  \left(\fint_{{I^\ominus_{R}(t_0)}}
   \operatorname{Tail}(u(t);x_0,R)^q\operatorname{d}\!t\right)^{\frac{1}{q}}\right.\nonumber\\
&& \left.
   +\sum_{i=0}^{\infty} (2^{-im}R)^{-d} |\mu|(Q_{2^{-im} R}(t_0,x_0))\right],
\end{eqnarray}
where the positive constant $C$ depends only on $d,\,s,\,\Lambda,\,q$. Furthermore, it remains to estimate the last term in \eqref{pe2}. Let $H:=2^m$, then it follows that
\begin{eqnarray}\label{pe3}
   && \sum_{i=0}^{\infty} (H^{-i}R)^{-d} |\mu|(Q_{H^{-i} R}(t_0,x_0)) \nonumber\\
  &=&  R^{-d} |\mu|(Q_{R}(t_0,x_0))+\sum_{i=0}^{\infty} (H^{-i-1}R)^{-d} |\mu|(Q_{H^{-i-1} R}(t_0,x_0)) \nonumber\\
  &\leq& \frac{1}{\ln 2}\int_{R}^{2R} \frac{|\mu|(Q_{R}(t_0,x_0))}{R^{d}}
  \frac{\operatorname{d}\!\rho}{\rho}+\frac{1}{\ln H  }\sum_{i=0}^{\infty}\int_{H^{-i-1}R}^{H^{-i}R}(H^{-i-1}R)^{-d} |\mu|(Q_{H^{-i-1} R}(t_0,x_0))
  \frac{\operatorname{d}\!\rho}{\rho}\nonumber \\
  &\leq& \frac{2^{d}}{\ln 2}\int_{R}^{2R} \frac{|\mu|(Q_{\rho}(t_0,x_0))}{\rho^{d}}
  \frac{\operatorname{d}\!\rho}{\rho}+\frac{H^{d}}{\ln H  }\sum_{i=0}^{\infty}\int_{H^{-i-1}R}^{H^{-i}R}\frac{|\mu|(Q_{\rho}(t_0,x_0))}{\rho^{d}}
  \frac{\operatorname{d}\!\rho}{\rho}\nonumber \\
  &\leq& C(d,s,\Lambda,q)\int_{0}^{2R} \frac{|\mu|(Q_{\rho}(t_0,x_0))}{\rho^{d}}
  \frac{\operatorname{d}\!\rho}{\rho}=C\mathcal {I}^{\mu}_{2}(t_0,x_0;2R).
\end{eqnarray}
Finally, substituting \eqref{pe3} into \eqref{pe2} and letting
$l\rightarrow\infty$, then we conclude from the Lebesgue differentiation theorem that the pointwise Riesz potential estimate \eqref{eq:PE} holds true. Hence, the proof of Theorem \ref{thm:PE} is complete.
\end{proof}

\subsection{Oscillation estimate}

In the last subsection, we prove the oscillation estimate given in Theorem \ref{thm:OPE} by introducing two appropriate fractional maximal operators.
\begin{proof}[Proof of Theorem \ref{thm:OPE}]
At this stage, we shall use two fractional maximal operators, one being the restricted fractional maximal function of order $2-\gamma$ for $\mu$, defined by
$${\bf M}_{2-\gamma, R}(\mu)(t,x):=\sup_{0<r\leq R} r^{2-\gamma}\frac{|\mu|(Q_r(t,x))}{r^{d+2}},$$
the other is the restricted nonlocal fractional sharp maximal function of order $\gamma$ for $u$, defined by
$${\bf M}^{\sharp}_{\gamma, R}(u)(t,x):=\sup_{0<r\leq R}r^{-\gamma} E(u,t,x,r).$$
Let us first claim that there exists a positive constant $C=C(d,s,\Lambda,q,\tilde{\gamma})$ such that
\begin{equation}\label{ope-1}
  {\bf M}^{\sharp}_{\gamma, R}(u)(t,x)\leq C{\bf M}_{2-\gamma, R}(\mu)(t,x)+CR^{-\gamma}E(u,t,x,R)
\end{equation}
holds for any $\gamma\in [0,\tilde{\gamma}]$ and any parabolic cylinder $Q_{2R}(t,x)\Subset I\times \Omega$.

For any fixed $\rho\in (0,R]$, and $\sigma\in(0,\frac{1}{2})$ to be determined later, then Lemma \ref{lemma:OscDecu} implies that
\begin{equation*}
  (\sigma\rho)^{-\gamma}E(u,t,x,\sigma\rho)\leq C_0(d,s,\Lambda,q) \sigma^{\gamma_0-\gamma}\rho^{-\gamma}E(u,t,x,\rho)+C(d,s,\Lambda,q,\sigma) \rho^{-d-\gamma}|\mu|(Q_\rho(t,x)).
\end{equation*}
Selecting $\sigma=\sigma(d,s,\Lambda,q,\tilde{\gamma})$ small enough such that $C_0\sigma^{\gamma_0-\tilde{\gamma}}=\frac{1}{2}$, then $C_0\sigma^{\gamma_0-\gamma}\leq\frac{1}{2}$ ensured by $\gamma\leq \tilde{\gamma}$. It follows that
\begin{eqnarray*}
  (\sigma\rho)^{-\gamma}E(u,t,x,\sigma\rho)&\leq&\frac{1}{2}\rho^{-\gamma}E(u,t,x,\rho)+C \rho^{2-\gamma}\frac{|\mu|(Q_\rho(t,x))}{\rho^{d+2}}\\
  &\leq&\frac{1}{2}{\bf M}^{\sharp}_{\gamma, R}(u)(t,x)+C{\bf M}_{2-\gamma, R}(\mu)(t,x),
\end{eqnarray*}
where the positive constant $C=C(d,s,\Lambda,q,\tilde{\gamma})$. Due to the arbitrariness of $\rho\in (0,R]$, we further obtain
\begin{equation}\label{ope-2}
  \sup_{0<r\leq \sigma R} r^{-\gamma}E(u,t,x,r)\leq\frac{1}{2}{\bf M}^{\sharp}_{\gamma, R}(u)(t,x)+C{\bf M}_{2-\gamma, R}(\mu)(t,x).
\end{equation}
Meanwhile, by a direct calculation, we can derive
\begin{equation}\label{ope-3}
  \sup_{\sigma R<r\leq R} r^{-\gamma}E(u,t,x,r)\leq C(d,s,\Lambda,q,\tilde{\gamma})R^{-\gamma}E(u,t,x,R).
\end{equation}
A combination of \eqref{ope-2} and \eqref{ope-3} yields that
\begin{equation*}
  {\bf M}^{\sharp}_{\gamma, R}(u)(t,x)\leq\frac{1}{2}{\bf M}^{\sharp}_{\gamma, R}(u)(t,x)+C{\bf M}_{2-\gamma, R}(\mu)(t,x)+CR^{-\gamma}E(u,t,x,R),
\end{equation*}
where the first term on the right side can be absorbed by the left side, and thus the assertion \eqref{ope-1} is valid.

Let $(t,x),\,(\tau,y)\in Q_{\frac{R}{8}}(t_0,x_0)$ be Lebesgue points of $u$ and $r\in(0,\frac{R}{8}]$, then for any $0<\rho\leq \frac{r}{8}$, there exist $k\in\mathbb{N}$ and $\theta\in(\frac{1}{4},\frac{1}{2}]$ such that $\rho=\theta^k\frac{r}{4}$.
A direct calculation shows that
\begin{eqnarray}\label{ope-4}
  |(u)_{Q_{\frac{r}{4}}(t,x)}-(u)_{Q_\rho(t,x)}| &\leq&\sum_{i=0}^{k-1}|(u)_{Q_{\theta^i\frac{r}{4}}(t,x)}-(u)_{Q_{\theta^{i+1}\frac{r}{4}}(t,x)}|  \nonumber  \\
   &\leq& \theta^{-\frac{2}{q}-d}\sum_{i=0}^{k-1}E(u,t,x, \theta^i\frac{r}{4})\nonumber \\
    &=&\frac{\theta^{-\frac{2}{q}-d}}{\ln(\theta^{-1})}\sum_{i=0}^{k-1}\int_{\theta^i \frac{r}{4}}^{\theta^{i-1} \frac{r}{4}}E(u,t,x, \theta^i\frac{r}{4})\frac{\operatorname{d}\!\varrho }{\varrho}\nonumber \\
     &\leq&C(d,s,q)\frac{4^{\frac{2}{q}+d}}{\ln2}\sum_{i=0}^{k-1}\int_{\theta^i \frac{r}{4}}^{\theta^{i-1} \frac{r}{4}}E(u,t,x, \varrho)\frac{\operatorname{d}\!\varrho }{\varrho}\nonumber \\
     &\leq&C(d,s,q)\int_{\rho}^{ r}E(u,t,x, \varrho)\frac{\operatorname{d}\!\varrho }{\varrho}.
\end{eqnarray}
To estimate the last term, we apply Lemma \ref{lemma:OscDecu} again, together with the choice of $\sigma$, to deduce that
\begin{eqnarray*}
 \int_{\rho}^{ r}E(u,t,x, \varrho)\frac{\operatorname{d}\!\varrho }{\varrho}  &\leq& \int_{\sigma\rho}^{ r}E(u,t,x, \varrho)\frac{\operatorname{d}\!\varrho }{\varrho} \\
   &=&\int_{\rho}^{r}E(u,t,x, \sigma\varrho)\frac{\operatorname{d}\!\varrho }{\varrho}+\int_{\sigma r}^{ r}E(u,t,x, \varrho)\frac{\operatorname{d}\!\varrho }{\varrho}  \\
   &\leq&\frac{1}{2}\int_{\rho}^{ r}E(u,t,x, \varrho)\frac{\operatorname{d}\!\varrho }{\varrho}+C\int_{\rho}^{r} \frac{|\mu|(Q_{\varrho}(t,x))}{\varrho^d}\frac{\operatorname{d}\!\varrho }{\varrho}+CE(u,t,x, r)  \\
   &\leq&2C\int_{\rho}^{r} \frac{|\mu|(Q_{\varrho}(t,x))}{\varrho^d}\frac{\operatorname{d}\!\varrho }{\varrho}+2CE(u,t,x, r)\\
   &\leq&Cr^\gamma\mathcal {I}^\mu_{2-\gamma}(t,x;R) +CE(u,t,x, r),
\end{eqnarray*}
where the positive constant $C=C(d,s,\Lambda,q,\tilde{\gamma})$. Substituting the above inequality into \eqref{ope-4}, we derive
\begin{equation*}
  |(u)_{Q_{\frac{r}{4}}(t,x)}-(u)_{Q_\rho(t,x)}|\leq Cr^\gamma\mathcal {I}^\mu_{2-\gamma}(t,x;R) +CE(u,t,x, r).
\end{equation*}
Letting $\rho\rightarrow 0$, then it follows from the Lebesgue differentiation theorem that
\begin{equation*}
  |(u)_{Q_{\frac{r}{4}}(t,x)}-u(t,x)|\leq Cr^\gamma\mathcal {I}^\mu_{2-\gamma}(t,x;R) +CE(u,t,x, r).
\end{equation*}
Treating in a completely similar manner, we also have
\begin{equation*}
  |(u)_{Q_{\frac{r}{4}}(\tau,y)}-u(\tau,y)|\leq Cr^\gamma\mathcal {I}^\mu_{2-\gamma}(\tau,y;R) +CE(u,\tau,y, r).
\end{equation*}
We now take $r=\frac{|t-\tau|^{\frac{1}{2}}+|x-y|}{3}<\frac{R}{8}$, then it is easy to verify that
$Q_r(\tau,y)\subset Q_{4r}(t,x)$. Combining the last two inequalities with \eqref{ope-1}, the fact that $Q_{\frac{R}{2}}(t,x)\subset Q_{R}(t_0,x_0)$, H\"{o}lder's inequality and Lemma 4.1 presented in \cite{KuMi}, we conclude that
\begin{eqnarray*}
&&|u(t,x)-u(\tau,y)| \nonumber\\
&\leq&|(u)_{Q_{\frac{r}{4}}(t,x)}-(u)_{Q_{\frac{r}{4}}(\tau,y)}|+CE(u,t,x, r)+CE(u,\tau,y, r)+Cr^\gamma\left[\mathcal {I}^\mu_{2-\gamma}(t,x;R)+\mathcal {I}^\mu_{2-\gamma}(\tau,y;R)\right]\\
&\leq&CE(u,t,x, 4r)+Cr^\gamma\left[\mathcal {I}^\mu_{2-\gamma}(t,x;R)+\mathcal {I}^\mu_{2-\gamma}(\tau,y;R)\right]\\
&\leq& Cr^\gamma{\bf M}^{\sharp}_{\gamma, \frac{R}{2}}(u)(t,x)+Cr^\gamma\left[\mathcal {I}^\mu_{2-\gamma}(t,x;R)+\mathcal {I}^\mu_{2-\gamma}(\tau,y;R)\right]\\
 &\leq&Cr^\gamma\left[{\bf M}_{2-\gamma,\frac{R}{2}}(\mu)(t,x)+R^{-\gamma}E(u,t,x,\frac{R}{2})\right]+Cr^\gamma\left[\mathcal {I}^\mu_{2-\gamma}(t,x;R)+\mathcal {I}^\mu_{2-\gamma}(\tau,y;R)\right]\\
 &\leq&C(\frac{r}{R})^\gamma E(u,t_0,x_0,R)+Cr^\gamma\left[\mathcal {I}^\mu_{2-\gamma}(t,x;R)+\mathcal {I}^\mu_{2-\gamma}(\tau,y;R)\right]\\
 &\leq& C
\left[\left(\fint_{Q_{R}(t_0,x_0)} |u| ^q \operatorname{d}\!x\operatorname{d}\!t \right)^{\frac{1}{q}}+  \left(\fint_{{I^\ominus_{R}(t_0)}}
   \operatorname{Tail}(u(t);x_0,R)^q\operatorname{d}\!t\right)^{\frac{1}{q}}
 \right]\left(\frac{|t-\tau|^{\frac{1}{2}}+|x-y|}{R}\right)^\gamma \nonumber\\
&&+C\left[\mathcal {I}^{\mu}_{2-\gamma}(t,x;R)+\mathcal {I}^{\mu}_{2-\gamma}(\tau,y;R)\right]\left(|t-\tau|^{\frac{1}{2}}+|x-y|\right)^\gamma.
\end{eqnarray*}
Therefore, the proof of Theorem \ref{thm:OPE} is complete.
\end{proof}

\section*{Acknowledgments} This work is supported by the National Natural Science Foundation of China (NSFC Grant No.12101452 and No.12071229).

\bibliography{bibliography}

\end{document}